\DeclareMathOperator{\supp}{supp}
\DeclareMathOperator{\cl}{cl}
\DeclareMathOperator{\ordem}{o}
\theoremstyle{definition}
\newtheorem{defin}{Definition}[section]
\newtheorem{prop}[defin]{Proposition}
\newtheorem{lem}[defin]{Lemma}
\newtheorem{teo}[defin]{Theorem}
\newtheorem{cor}[defin]{Corollary}
\newtheorem{ex}[defin]{Example}
\newtheorem{quest}[defin]{Question}
\numberwithin{equation}{section}
\title[Countably compact group topologies with convergent sequences]{Algebraic structure of countably compact non-torsion Abelian groups of size continuum from selective ultrafilters}
\author[M. K. Bellini]{Matheus Koveroff Bellini}
\address{Institute of Mathematics and Statistics\\ University of S\~ao Paulo\\Rua do Mat\~ao, 1010 - CEP 05508-090 - S\~ao Paulo - SP - Brazil}
\email{matheusb@ime.usp.br}
\author[A. C. Boero]{Ana Carolina Boero}
\address{Center of Mathematics, Computing and Cognition\\ Federal University of ABC\\Rua Santa Adélia, 166 - CEP 09210-170 - Santo André - SP - Brazil}
\email{ana.boero@ufabc.edu.br}
\author[V. O. Rodrigues]{Vinicius de Oliveira Rodrigues}
\address{Institute of Mathematics and Statistics\\ University of S\~ao Paulo\\Rua do Mat\~ao, 1010 - CEP 05508-090 - S\~ao Paulo - SP - Brazil}
\email{vinior@ime.usp.br}
\author[A. H. Tomita]{Artur Hideyuki Tomita}
\address{Institute of Mathematics and Statistics\\ University of S\~ao Paulo\\Rua do Mat\~ao, 1010 - CEP 05508-090 - S\~ao Paulo - SP - Brazil}
\email{tomita@ime.usp.br}
\thanks{MSC: primary 54H11, 22A05; secondary 54A35, 54G20}
\keywords{countable compactness, convergent sequences, topological group}
\begin{document}

\maketitle

\begin{abstract}
Assuming the existence of $\mathfrak c$ incomparable selective ultrafilters, we classify the non-torsion Abelian groups of cardinality $\mathfrak c$ that admit a countably compact group topology. We show that for each $\kappa \in [\mathfrak c, 2^\mathfrak c]$ each of these groups has a countably compact group topology of weight $\kappa$ without non-trivial convergent sequences and another that has convergent sequences. 

Assuming the existence of $2^\mathfrak c$ selective ultrafilters, there are at least $2^\mathfrak c$ non homeomorphic such topologies in each case and we also show that every Abelian group of cardinality at most $2^\mathfrak c$ is algebraically countably compact. We also show that it is consistent that every  Abelian group of cardinality $\mathfrak c$ that admits a countably compact group topology admits a countably compact group topology without non-trivial convergent sequences whose weight has countable cofinality. 
\end{abstract}

\section{Introduction}

\subsection{Some history}

Under Martin's Axiom, Dikranjan and Tkachenko \cite{dikranjan&tkachenko} showed that if $G$ is a non-torsion Abelian group of size continuum, then the following conditions are equivalent:

\begin{enumerate}[label=\alph*)]
  \item $G$ admits a countably compact Hausdorff group topology without non-trivial convergent sequences;
  \item $G$ admits a countably compact Hausdorff group topology;
  \item the free rank of $G$ is equal to $\mathfrak{c}$ and, for all $d, n \in \mathbb{N}$ with $d \mid n$, the group $dG[n]$ is either finite or has cardinality $\mathfrak{c}$.

\end{enumerate}

The implications a)$\rightarrow$b) and b)$\rightarrow$c) hold in ZFC \cite{dikranjan&tkachenko}.\\

The classification of all torsion groups of arbitrary cardinality using a single selective ultrafilter and a mild cardinal arithmetic hypothesis appears in Castro-Pereira and Tomita \cite{castro-pereira&tomita2010}. The example for torsion groups can be $p$-compact for the selective ultrafilter $p$ and this cannot be expected for non-torsion Abelian groups. Indeed, there are no $p$-compact group topologies on free Abelian groups \cite{tomita2}.

Boero and Tomita \cite{boero&tomita1} showed that the almost torsion-free groups of cardinality $\mathfrak c$ admit a countably compact group topology using $\mathfrak c$ selective ultrafilters. 

In \cite{boero&garcia-ferreira&tomita}, it was shown that the free Abelian group of cardinality $\mathfrak c$ admits a countably compact group with a convergent sequence using $\mathfrak c$ selective ultrafilters. In 
\cite{bellini&boero&castro&rodrigues&tomita} it was shown from $\mathfrak p = \mathfrak c$ that every Abelian group of cardinality $\mathfrak c$ that admits a countably compact group topology also admits a countably compact group topology with a convergent sequence.

Malykhin and Shapiro  \cite{malykhin&shapiro} showed that every pseudocompact Abelian group whose weight has countable cofinality has a convergent sequence and showed a forcing example of a pseudocompact group topology without non-trivial convergent sequences whose weight is $\aleph_1 < (\aleph_1)^\omega$ for the group $(\mathbb Z_2)^{(\mathfrak c)}$. 
Tomita \cite{tomita4} showed that it is consistent that  $\mathbb Z^{(\mathfrak c)}$ admits a countably compact group topology without non-trivial convergent sequences whose weight is $\aleph_\omega$. 

In this paper, we improve the main results of both \cite{bellini&boero&castro&rodrigues&tomita} and \cite{dikranjan&shakhmatov2}. Indeed, we show from the existence of $\mathfrak c$ selective ultrafilters that condition c) implies condition  a), d) and e) where d) and e) are as follows:\\

\begin{enumerate}[label=\alph*)]
\setcounter{enumi}{3}
\item $G$ admits a countably compact Hausdorff group topology with a convergent sequence.\\

\item for each $\kappa \in [\mathfrak c,2^\mathfrak c]$, $G$ admits a countably compact Hausdorff group topology without convergent sequences whose weight is $\kappa$.\\
\end{enumerate}

Assuming $2^\mathfrak c$ selective ultrafilters, there are $2^\mathfrak c$ non-homeomorphic such group topologies satisfying a), b), d) and e). We note that Martin's Axiom implies the existence of $2^\mathfrak c$ selective ultrafilters; on the other hand the existence of $2^ \mathfrak c$ selective ultrafilters is consistent with the negation of Martin's Axiom.

Also using $2^\mathfrak c$ selective ultrafilters, we show that every Abelian group of cardinality at most $2^\mathfrak c$ is algebraically countably compact.

Recently Hru\v sák, van Mill, Ramos-Garcia and Shelah proved there exists a countably compact group without non-trivial convergent sequences in ZFC \cite{michaelnew}. It is not known yet if their technique can be used to produce a countably compact free Abelian group without non-trivial convergent sequences in ZFC. This would be a first step to classify in ZFC the non-torsion Abelian groups of cardinality $\mathfrak c$ that admit a countably compact group topology.

Also recently, Bellini, Rodrigues and Tomita  proved that $\oplus_{\xi < \mathfrak c} \mathbb Q$ admits a $p$-compact group topology (a $p$-compact space is a space such that every sequence has a $p$-limit) \cite{BELLINI2021107653}. It is natural to ask for the classification of all groups of cardinality $\mathfrak c$ that admits a $p$-compact group topology. Such classification would be different from the one here since it is known that the free Abelian group does not admit a $p$-compact group topology,

\subsection{Basic results, notation and terminology}

Recall that a $T_1$ topological space is \emph{countably compact} iff every infinite subspace has an accumulation point in the space.

The following definition was introduced in \cite{bernstein} and is closely related to the definition of accumulation point.

\begin{defin}\label{def_p-limit}
Let $p$ be a free ultrafilter on $\omega$ and let $s:\omega\rightarrow X$ be a sequence in a topological space $X$. We say that $x \in X$ is a \emph{$p$-limit point} of $s$ if, for every neighborhood $U$ of $s$, $\{n \in \omega : s(n) \in U\} \in p$.
\end{defin}

 If $X$ is a Hausdorff space a sequence $s$ has at most one $p$-limit point $x$ and we write $x = p$-$\lim s$.

The set of all free ultrafilters on $\omega$ will be denoted by $\omega^{*}$. It is not difficult to show that a $T_1$ topological space $X$ is countably compact iff each sequence in $X$ there exists $p \in \omega^*$ such that $s$ has a $p$-limit point.

The following proposition, whose proof is well known and straightforward, tell us how the $p$-limits behave with respect to products. 
\begin{prop}\label{prop_p-limit_product}
If $p \in \omega^{*}$ and $(X_i : i \in I)$ is a family of topological spaces, then $(y_{i})_{i \in I} \in \prod_{i \in I} X_i$ is a $p$-limit point of a sequence $((x_{i}^{n})_{i \in I} : n \in \omega)$ in $\prod_{i \in I} X_i$ if, and only if, $y_i$ is a 
$p$-limit point of $(x_{i}^{n} : n \in \omega)$ for every $i \in I$.\qed
\end{prop}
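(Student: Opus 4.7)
The plan is to prove the proposition by directly unpacking the definition of $p$-limit in terms of the product topology, using that basic open sets in $\prod_{i \in I} X_i$ depend on only finitely many coordinates.

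For the forward implication, I would fix an index $j \in I$ and an open neighborhood $V$ of $y_j$ in $X_j$. Then the cylinder $U = \pi_j^{-1}(V)$ is an open neighborhood of $(y_i)_{i \in I}$ in the product, so by hypothesis $\{n \in \omega : (x_i^n)_{i \in I} \in U\} \in p$. But this set equals $\{n \in \omega : x_j^n \in V\}$, hence lies in $p$, which is exactly what it means for $y_j$ to be a $p$-limit of $(x_j^n : n \in \omega)$.

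For the converse, I would take an arbitrary basic open neighborhood $U$ of $(y_i)_{i \in I}$ in the product. Such a $U$ has the form $\prod_{i \in I} U_i$ where $U_i$ is open in $X_i$, $y_i \in U_i$ for all $i$, and $U_i = X_i$ for all $i \notin F$ for some finite $F \subseteq I$. By hypothesis each $A_i = \{n \in \omega : x_i^n \in U_i\}$ belongs to $p$ for $i \in F$, so $\bigcap_{i \in F} A_i \in p$ because $p$ is closed under finite intersections. Since $\bigcap_{i \in F} A_i = \{n \in \omega : (x_i^n)_{i \in I} \in U\}$, this set lies in $p$, establishing that $(y_i)_{i \in I}$ is the $p$-limit in the product. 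There is no real obstacle; the only subtlety is remembering that a $p$-limit only needs to be checked against basic open neighborhoods, which is immediate since any open neighborhood contains a basic one and ultrafilters are upward closed.
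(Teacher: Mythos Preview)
Your proof is correct and complete. The paper itself does not give a proof of this proposition, stating only that it is ``well known and straightforward'' and marking it with a \qed; your argument is exactly the standard one that justifies this.
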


The following proposition is also very easy to prove. A proof can be found in \cite{hindman2011algebra} (Theorem 3.54).

\begin{prop}
	Let $X$, $Y$ be topological spaces and $f:X\rightarrow Y$ be a continuous function, $s:\omega\rightarrow X$ be a sequence in $X$ and $p \in \omega^*$. It follows that if $x=p$-$\lim (s_n: n \in \omega)$, then $f(x)=p$-$\lim (f(s_n): n \in \omega)$.\qed
\end{prop}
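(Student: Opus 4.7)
The plan is to verify the definition of $p$-limit directly, using continuity to pull open neighborhoods of $f(x)$ back to open neighborhoods of $x$, and then pushing the corresponding large set in $p$ forward along $f$.

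First, I would fix an arbitrary open neighborhood $V$ of $f(x)$ in $Y$ and set $U := f^{-1}(V)$; by continuity of $f$, the set $U$ is an open neighborhood of $x$ in $X$. The hypothesis $x = p\text{-}\lim (s_n : n \in \omega)$ then gives that $A := \{n \in \omega : s_n \in U\}$ belongs to $p$. Since every $n \in A$ satisfies $s_n \in f^{-1}(V)$, hence $f(s_n) \in V$, we have the inclusion $A \subseteq \{n \in \omega : f(s_n) \in V\}$, so this larger set also lies in $p$ by upward closure. As $V$ was arbitrary, this yields $f(x) = p\text{-}\lim (f(s_n) : n \in \omega)$.

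There is no genuine obstacle here: the argument is essentially a one-step translation between the topological definition of continuity and the ultrafilter formulation of convergence. It is worth noting that the ultrafilter hypothesis on $p$ plays no role — only the filter axioms (in particular upward closure under inclusion) are used — and similarly the Hausdorff condition that guarantees uniqueness of $p$-limits (mentioned just before the statement) is not needed for the existence claim proved here.
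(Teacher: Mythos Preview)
Your proof is correct and is the standard direct verification of the definition. The paper does not actually prove this proposition itself---it merely cites \cite{hindman2011algebra} (Theorem 3.54) and marks the statement with \qedsymbol---so there is nothing further to compare.
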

Since $+$ and $-$ are continuous functions in topological groups, it follows from the two previous propositions that:
\begin{prop}\label{prop_p-limit}
Let $G$ be a topological group and $p \in \omega^{*}$.

\begin{enumerate}

  \item If $(x_n : n \in \omega)$ and $(y_n : n \in \omega)$ are sequences in $G$ and $x, y \in G$ are such that $x = p$-$\lim(x_n : n \in \omega)$ and $y = p$-$\lim(y_n : n \in \omega)$, then $x + y = p$-$\lim (x_n + y_n : n \in \omega)$;

  \item If $(x_n : n \in \omega)$ is a sequence in $G$ and $x \in G$ is such that $x = p$-$\lim(x_n : n \in \omega)$, then $- x = p$-$\lim(- x_n : n \in \omega)$.\qed

\end{enumerate}
\end{prop}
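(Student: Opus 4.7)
The plan is to reduce both parts directly to the two preceding propositions, invoking only the standing fact that in a topological group $G$ the addition map $\sigma : G \times G \to G$, $\sigma(a,b) = a+b$, and the inversion map $\iota : G \to G$, $\iota(a) = -a$, are continuous.

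For part (1), I would first apply Proposition \ref{prop_p-limit_product} with the two-element index set $I = \{0,1\}$: since $x$ is a $p$-limit of $(x_n)_{n \in \omega}$ in $G$ and $y$ is a $p$-limit of $(y_n)_{n \in \omega}$ in $G$, the product proposition yields that $(x,y)$ is a $p$-limit of the sequence $((x_n, y_n) : n \in \omega)$ in $G \times G$ equipped with the product topology. Then I would feed this into the continuous-function proposition applied to $\sigma$: continuity gives
\[
x + y \;=\; \sigma(x,y) \;=\; p\text{-}\lim\bigl(\sigma(x_n, y_n) : n \in \omega\bigr) \;=\; p\text{-}\lim(x_n + y_n : n \in \omega),
\]
which is exactly the claim.

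For part (2), the continuous-function proposition alone suffices, applied directly to $\iota$. From $x = p$-$\lim(x_n : n \in \omega)$ one gets $-x = \iota(x) = p$-$\lim(\iota(x_n) : n \in \omega) = p$-$\lim(-x_n : n \in \omega)$.

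There is no real obstacle here; the statement is a formal corollary of the preceding two propositions combined with the definition of a topological group. The only point meriting a moment of care is that the continuity of addition is a statement about a map out of the product $G \times G$, so one genuinely needs Proposition \ref{prop_p-limit_product} to transport the pair of hypotheses into a single $p$-limit in $G \times G$ before pushing it through $\sigma$; without that step the continuous-function proposition would not directly apply.
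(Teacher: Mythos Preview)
Your proposal is correct and matches the paper's approach exactly: the paper does not spell out a proof but simply notes that since $+$ and $-$ are continuous in a topological group, the result follows from the two preceding propositions, which is precisely the argument you give.
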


The unit circle group $\mathbb{T}$ will be identified with the metric group $(\mathbb{R} / \mathbb{Z}, \delta)$ where $\delta$ is given by $\delta(x + \mathbb{Z}, y + \mathbb{Z}) = \min\{|x - y + a| : a \in \mathbb{Z}\}$, for every $x, y \in \mathbb{R}$. Given a subset $A$ of $\mathbb{T}$, we will denote by $\delta(A)$ the diameter of $A$ with respect to the metric $\delta$. The set of all non-empty open arcs of $\mathbb{T}$ will be denoted by $\mathcal{B}$.

We also fix $\mathcal U$ a basis of $\mathbb T^\mathfrak c$ of cardinality $\mathfrak c$ consisting of basic open sets which are products of elements of $\mathcal B$. As such, for each $U\in\mathcal U$ and $\mu\in\mathfrak c$ we define $U_\mu\in\mathcal B$ to be the $\mu$-th coordinate of $U$ and let $\supp U=\{\mu\in\mathfrak c:U_\mu\ne\mathbb T\}$.

Let $X$ be a set and $G$ be a group. We denote by $G^{X}$ the set of all functions from $X$ into $G$. The \emph{support} of $g \in G^{X}$ is the set $\{x \in X : g(x) \neq 0\}$, which will be designated as $\supp g$. The set $\{g \in G^{X} : |\supp g| < \omega\}$ will be denoted by $G^{(X)}$. If $f:\omega\rightarrow G^{(X)}$ is a sequence, then $\supp f=\bigcup_{n \in \omega} \supp  f(n)$.

The torsion part $T(G)$ of an Abelian group $G$ is the set $\{x \in G : \exists n \in \mathbb N ( nx = 0) \}$. Clearly, $T(G)$ is a subgroup of $G$. For every $n \in \mathbb{N}$, we put $G[n] = \{x \in G : nx = 0\}$. Given a natural number $n>0$, we say that $G$ is \emph{of exponent $n$} provided $G = G[n]$ and that $n$ is the minimal positive integer with this property.

The order of an element $x \in G$ will be denoted by $\ordem(x)$.

A non-empty subset $S$ of an Abelian group $G$ is said to be \emph{independent} if $0 \not \in S$ and, given distinct elements $s_1, \ldots, s_n$ of $S$ and integers $m_1, \ldots, m_n$, the relation $m_1 s_1 + \ldots + m_n s_n = 0$ implies that $m_i s_i = 0$, for all $i$. The free rank $r(G)$ of $G$ is the cardinality of a maximal independent subset of $G$ such that all of its elements have infinite order. It is easy to verify that $r(G) = |G / T(G)|$ if $r(G)$ is infinite.

An Abelian group $G$ is called \emph{divisible} if, for each $g \in G$ and each $n \in \mathbb{N} \setminus \{0\}$, the equation $nx = g$ has a solution $x \in G$.

\subsection{Notation for some specific groups} In this section we fix some of the notation that will be used throughout this article.

We fix a two element partition $\{P_0, P_1\}$ of $\mathfrak c$ such that $|P_0|=|P_1|=\mathfrak c$ and  $\omega+1\subseteq P_1$.

\begin{defin} We define the groups $\mathbb W^0=(\mathbb Q/\mathbb Z)^{(P_0\times\omega)}$, $\mathbb W^1=\mathbb Q^{(P_1)}$, $\mathbb W=\mathbb W^0\oplus \mathbb W^1$, $\mathbb X^0=\mathbb Q^{(P_0\times\omega)}$, $\mathbb X^1=\mathbb Q^{(P_1)}$, $\mathbb X=\mathbb X^0\oplus \mathbb X^1$.

Given $w \in \mathbb{W}$ (or $w \in {\mathbb X}$), we denote by $w^0$ and $w^1$ the respective projections to $\mathbb{W}^0$ and $\mathbb{W}^1$
(${\mathbb X}^0$ and ${\mathbb X}^1$); that is,  the unique elements of $\mathbb{W}$ (or $\mathbb X$) such that $\supp w^0 \subseteq {P_0}\times\omega$, $\supp w^1 \subseteq P_1$ and $w=w^0+w^1$.

Similarly, given $g$ a sequence from $\omega$ into $ \mathbb{W}$ (or ${\mathbb X}$), we define $g^0$ and $g^{1}$. So $g=g^0+g^1$, $\supp g^0\subseteq {P_0}\times\omega$ and $\supp g^1\subseteq P_1$.
\end{defin}
It will be useful to be able to easily transform an element of $\mathbb X$ into an element of $\mathbb{W}$, so we use the following definition: 
\begin{defin}
 We denote by $[\,\cdot\,]$ the homomorphism from ${\mathbb X}={\mathbb X}^0\oplus {\mathbb X}^1$ onto $\mathbb{W} = \mathbb{W} ^0 \oplus \mathbb{W}^1$ so that ${\mathbb X}^0 \to {\mathbb{W}}^0$ is the quotient map coordinatewise and ${\mathbb X}^1 \to {\mathbb{W}}^1 = {\mathbb X}^1$ is the identity.
 
  Given a function $g:\omega\rightarrow \mathbb X$, we also define $[g]:\omega\rightarrow \mathbb W$ be given by $[g](n)=[g(n)]$ for every $n \in \omega$.
\end{defin}

Now we introduce the notion of a \textit{nice subgroup} of $\mathbb W$. These will be the main groups we will be working with throughout the article. Associated to each nice subgroup $\textbf{W}$ of $\mathbb W$ we have a subgroup $\mathbf Q$ of $\mathbb X$ of the elements which generate the elements of $\mathbf W$ and the group $\mathbf Z$ of elements of integer coordinates of $\mathbf Q$. All these groups will be useful in the construction.

\begin{defin}Given a family of positive integers $\vec n=(n_\xi: \xi \in P_0)$, we define $\vec  P_{0, \vec n}=\bigcup_{\xi \in P_0}\{\xi\}\times n_\xi$.

We say that $\mathbf{W}$ is a nice subgroup of $\mathbb W$ if there exists a (unique) family of positive integers $\vec n=(n_\xi: \xi \in P_0)$ such that $\mathbf W=(\mathbb Q/\mathbb Z)^{(\vec P_{0, \vec n})}\oplus \mathbb Q^{(P_1)}$.

In this case, we define $\mathbf{W}^0=(\mathbb Q/\mathbb Z)^{(\vec P_{0, \vec n})}$ and $\mathbf{W}^1=\mathbb Q^{(P_1)}$. Moreover, we define the following subgroups of $\mathbb X$: $\mathbf Q_{\mathbf W}^0=\mathbb Q^{(\vec P_{0, \vec n})}$, $\mathbf Q_{\mathbf W}^1= \mathbb Q^{(P_1)}$ and $\mathbf Q_{\mathbf W}= \mathbf Q_{\mathbf W}^0 \oplus \mathbf Q_{\mathbf W}^1$,
$\mathbf Z_{\mathbf W}^0=\mathbb Z^{(\vec P_{0, \vec n})}$, $\mathbf Z_{\mathbf W}^1=\mathbb Z^{(P_1)}$ and ${\mathbf Z}_\mathbf W=\mathbf Z_{\mathbf W}^0\oplus \mathbf Z_{\mathbf W}^1$.
\end{defin}

Note that, given a nice subgroup $\mathbf W$, the restriction of $[\,\cdot\,]$ to $\mathbf Q_\mathbf W$ is a homomorphism onto $\mathbf W$.

Throughout this article we will never deal with more than one nice subgroup of $\mathbb W$ at the same time. Consequently, since no confusion arrises, given a nice subgroup $\mathbf W$ of $\mathbb W$ we may just write $\vec P_0=\vec P_{0, \vec n}$, $\mathbf Q=\mathbf Q_{\mathbf W}$, $\mathbf Z=\mathbf Z_{\mathbf W}$, and $\mathbf Q^j=\mathbf Q_{\mathbf W}^j$, $\mathbf Z^j=\mathbf Z_{\mathbf W}^j$ for $j=0$ or $1$.

The following notation is also useful:

\begin{defin}
Given $E\subseteq \mathfrak c$, we define $E^0=E\cap P_0$ and $E^1=E\cap P_1$.
\end{defin}

\begin{defin} Given $\xi \in P_1$, the function $\chi_{\xi} : P_1 \to \mathbb{Q}$ is given by \[\chi_{\xi}(\mu) = \left\{ \begin{array}{lll}
                                                      1 & \hbox{if} & \mu = \xi \\
                                                      0 & \hbox{if} & \mu \neq \xi.
                                                    \end{array}
 \right.\]
\end{defin}

\begin{defin}if $r \in  \mathbb Q/\mathbb Z$, then $p(r)$ and $q(r)$ are the unique integers $p, q$ such that $q>0$, $\gcd(p, q)=1$, $0\leq p<q$ and $r=\frac{p}{q}+\mathbb Z$. Likewise, if $r \in \mathbb Q$,  $p(r)$ and $q(r)$ are the unique integers $p, q$ such that $q>0$, $\gcd(p, q)=1$ and $r=\frac{p}{q}$.
 
 We also define, for $w\in\mathbf W$, $p(w) = \max\{|p(w(z))| : z \in \supp w\}$  and $q(w) = \max\{q(w(z)) : z \in \supp w \}$ if $w \neq 0$. We define $p(0)=0$ and $q(0)=0$.
\end{defin}

 \subsection{Structure of the article}
 
In Section \ref{section.selective.ultrafilters}, we recall the definition of selective ultrafilters and some properties which are necessary to construct homomorphisms restricted to countable subgroups.

In Section 
\ref{section.arc}, we recall the notion of arc homomorphism that was defined in \cite{bellini&boero&castro&rodrigues&tomita} which helped simplify the construction of 
the homomorphisms.

In Section \ref{section.types}, we define the types and state the theorem in \cite{bellini&boero&castro&rodrigues&tomita} that shows that every 1-1 sequence can be associated to one of the types.

In Section \ref{section.arc.hom}, we improve the result in \cite{bellini&boero&castro&rodrigues&tomita} to have an estimate of the size of the output arc that depends only on the size of the input arc.

In Section \ref{section.countable.hom}, we prepare the arc homomorphism to make the sequences have the pre-assigned $p$-limits and we produce the homomorphisms on countable subgroups. Here lie the main differences between constructing homomorphisms using $\mathfrak p=\mathfrak c$ and selective ultrafilters. For selective ultrafilters, we have to know the size of the output arcs in advance to guarantee the assigned points will be the $p$-limits.

In Section \ref{section.proof.main}, we present the algebraic immersion, extend the homomorphism on countable subgroups and define group topologies with a non-trivial  convergent sequence and without non-trivial convergent sequences.

In Section \ref{moreexamples} we show that the main construction can be used to obtain some more examples which were already mentioned in the introduction.

\section{Selective Ultrafilters} \label{section.selective.ultrafilters}
	In this section we review some basic facts about selective ultrafilters, the Rudin-Keisler order and some lemmas we will use in the next sections.
	\begin{defin}A selective ultrafilter (on $\omega$), also called Ramsey ultrafilter, is a free ultrafilter $p$ on $\omega$ such that for every partition $(A_n: n \in \omega)$ of $\omega$, either there exists $n$ such that $A_n \in p$ or there exists $B \in p$ such that $|B\cap A_n|=1$ for every $n \in \omega$. 
	\end{defin}
	
	The following proposition is well known. We provide \cite{jech} as a reference.
	\begin{prop}Let $p$ be a free ultrafilter on $\omega$. Then the following are equivalent:
		\begin{enumerate}[label=\alph*)]
			\item $p$ is a selective ultrafilter,
			\item for every $f \in \omega^\omega$, there exists $A \in p$ such that $f|_A$ is either constant or one-to-one,
			\item for every function $f:[\omega]^2\rightarrow 2$ there exists $A \in p$ such that $f|_{[A]^2}$ is constant.
		\end{enumerate}
	\end{prop}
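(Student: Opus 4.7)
The plan is to prove the equivalences via a)$\Leftrightarrow$b) and a)$\Leftrightarrow$c) separately. Three of the four implications are direct partition-encoding arguments; the genuinely Ramsey-theoretic direction a)$\Rightarrow$c) is the technical heart.

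For a)$\Rightarrow$b), given $f\in\omega^\omega$, apply selectivity to the partition $\{f^{-1}(n):n\in\omega\}$ of $\omega$ (discarding empty pieces): either a single fiber lies in $p$, making $f|_A$ constant, or a transversal lies in $p$, making $f|_A$ one-to-one. For b)$\Rightarrow$a), given a partition $(A_n)_n$ of $\omega$, let $f$ send each $k$ to its unique index; applying b) yields either $A_n\in p$ directly or a set meeting each $A_n$ in at most one point, which is enlarged to a full transversal by adjoining one representative from each $A_n$ disjoint from $A$. For c)$\Rightarrow$a), encode the partition as $f:[\omega]^2\to 2$ with $f(\{m,k\})=0$ iff $m,k$ lie in a common piece; a monochromatic $A\in p$ of color $0$ forces $A\subseteq A_n$ for some $n$, while color $1$ gives a partial transversal to be enlarged as before.

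For a)$\Rightarrow$c), proceed in three stages. First, for each $n\in\omega$ use that $p$ is an ultrafilter to pick $c(n)\in\{0,1\}$ with $D_n:=\{m>n:f(\{n,m\})=c(n)\}\in p$; applying a)$\Rightarrow$b) to the two-valued $c$ yields $X\in p$ on which $c$ is constant, say $c\equiv 0$. Second, deduce the $p$-point property of $p$ from selectivity (applied to the partition of $\omega$ arising from the decreasing family $\bigcap_{i\le n}D_i$) to obtain $A^*\in p$ almost contained in every $D_n$; let $k(n)$ be the least integer with $A^*\setminus k(n)\subseteq D_n$, made non-decreasing by replacement with running maxima. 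Third, partition $\omega$ into intervals $I_i=[j_i,j_{i+1})$ with $j_0=0$ and $j_{i+1}=k(j_i)+1$; since each $I_i$ is finite, selectivity produces a transversal $B\in p$, and whichever of the even- or odd-indexed halves $B_{e/o}$ lies in $p$ has consecutive elements separated by a skipped interval, forcing $m\ge k(n)$ for any $n<m$ in $B_{e/o}$. The intersection $A:=B_{e/o}\cap A^*\cap X\in p$ then satisfies $f|_{[A]^2}\equiv 0$.

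The main obstacle is orchestrating the three combinatorial moves inside a)$\Rightarrow$c): the local color choice at each index via ultrafilter-ness, the global color stabilization via a)$\Leftrightarrow$b), and the diagonal spacing via the $p$-point consequence of selectivity together with one further application of selectivity. The delicate point is engineering the interval partition so that, after the parity trick, every pair in the chosen transversal lands inside the appropriate $D_n$; doubling the gap via the parity trick is what compensates for the off-by-one inherent in the recursion $j_{i+1}=k(j_i)+1$.
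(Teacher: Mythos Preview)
The paper does not actually prove this proposition; it states it as well known and refers the reader to \cite{jech}. Your argument is therefore not being compared against a proof in the paper, and it is essentially correct and follows the standard route found in Jech and elsewhere.

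One small technical wrinkle: in your interval construction you set $j_{i+1}=k(j_i)+1$, but nothing you have said guarantees $k(j_i)\ge j_i$, so the sequence $(j_i)$ need not be strictly increasing and the $I_i$ need not form a partition. (For instance, if $0,1\notin A^*$ and $A^*\subseteq D_0\cap D_1$, then even after taking running maxima you could have $k(0)=k(1)=0$, giving $j_0=0$, $j_1=1$, $j_2=1$.) The fix is immediate: replace $k(n)$ by $\max\{n+1,k(n)\}$, or equivalently set $j_{i+1}=\max\{j_i,k(j_i)\}+1$; this does not disturb the inclusion $A^*\setminus k(n)\subseteq D_n$ and makes the rest of your parity argument go through verbatim. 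With that adjustment the verification that any $n<m$ in $A=B_{e/o}\cap A^*\cap X$ satisfy $m\ge k(n)$, hence $m\in D_n$, hence $f(\{n,m\})=0$, is clean.
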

	
	The Rudin-Keisler order is defined as follows:
	\begin{defin}
		
		Let $\mathcal U$ be a filter on $\omega$ and $f:\omega\rightarrow \omega$. We define $f_*(\mathcal U)=\{A \subseteq \omega: f^{-1}[A] \in \mathcal U\}$.
	\end{defin}
	
	It is easy to verify that: $f_*(\mathcal U)$ is a filter; if $\mathcal U$ is an ultrafilter, so is $f_*(\mathcal U)$; if $f, g:\omega\rightarrow \omega$, then $(f\circ g)_*=f_*\circ g_*$; and $(\text{id}_\omega)_*$ is the identity over the set of all filters. This implies that if $f$ is bijective, then $(f^{-1})_*=(f_*)^{-1}$.
	\begin{defin}
		
		Let $p$, $q$ be ultrafilters on $\omega$. We say that $p\leq q$ (or $p\leq_{\text{RK}} q$), if we need to avoid ambiguity) iff there exists $f \in \omega$ such that $f_{*}(p)=q$.
		
		The \textit{Rudin-Keisler order} is the set of all free ultrafilters over $\omega$ ordered by $\leq_{\text{RK}}$. We say that two ultrafilters $p, q$ are equivalent iff $p\leq q$ and $q\leq p$.
	\end{defin}
	
	It is easy to verify that $\leq$ is a preorder and that the equivalence defined above is in fact  an equivalence relation. Moreover, the equivalence class of a fixed ultrafilter is the set of all fixed ultrafilters, so the relation restricts to $\omega^*$ without modifying the equivalence classes. We refer to \cite{jech} for the following proposition:
	
	\begin{prop}The following are true:
		\begin{enumerate}
			\item If $p, q$ are ultrafilters, then $p\leq q$ and $q\leq p$ is equivalent to the existence of a bijection $f:\omega\rightarrow \omega$ such that $f_*(p)=q$.
			\item The selective ultrafilters are exactly the minimal elements of the Rudin-Keisler order.
		\end{enumerate}
		
	\end{prop}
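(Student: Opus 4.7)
The plan is to reduce both parts to a single rigidity lemma: for any free ultrafilter $p$ on $\omega$ and any $h:\omega\to\omega$ with $h_*(p)=p$, the set $\{n\in\omega:h(n)=n\}$ must belong to $p$. To prove this lemma, suppose for contradiction that $A=\{n:h(n)\ne n\}\in p$. The classical fact is that $A$ can be partitioned into three pieces $B_0,B_1,B_2$ with $h[B_i]\cap B_i=\emptyset$ for each $i$; this follows from a three-coloring of the functional graph of $h|_A$, which is routine since that graph has out-degree one and no fixed points. Since $A\in p$ and $p$ is an ultrafilter, some $B_i\in p$. But then $h^{-1}[B_i]\cap B_i=\emptyset$, so $h^{-1}[B_i]\subseteq\omega\setminus B_i\notin p$, contradicting $h_*(p)=p$.

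For part (1), given $f_*(p)=q$ and $g_*(q)=p$, we have $(g\circ f)_*(p)=p$, so the rigidity lemma yields $C\in p$ on which $g\circ f$ is the identity. Hence $f|_C:C\to D:=f[C]$ is a bijection with inverse $g|_D$, and $D\in q$. To extend $f$ to a full bijection, partition $C$ into two infinite pieces and keep the one $C_1\subseteq C$ with $C_1\in p$ (one must lie in $p$ by the ultrafilter property). Both $\omega\setminus C_1$ and $\omega\setminus D_1$ (with $D_1=f[C_1]\in q$) are then infinite. Any bijection $\omega\setminus C_1\to\omega\setminus D_1$ can be pasted onto $f|_{C_1}$ to produce a bijection $\widetilde f:\omega\to\omega$ that agrees with $f$ on $C_1\in p$, so $\widetilde f_*(p)=f_*(p)=q$.

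For part (2), suppose $p$ is selective and $q\le p$ is a free ultrafilter, say $q=f_*(p)$. The characterization of selectivity given in the preceding proposition supplies $A\in p$ on which $f|_A$ is constant or injective; constancy would make $q$ principal, so $f|_A$ is one-to-one. Defining $g$ to be the inverse of $f|_A$ on $f[A]$ and arbitrary elsewhere, a short check using $A\in p$ shows $g_*(q)=p$, so $p\le q$ and $p$ is minimal. Conversely, if $p$ is not selective, choose $f:\omega\to\omega$ such that no $A\in p$ makes $f|_A$ constant or one-to-one; then no fiber of $f$ lies in $p$, so $q=f_*(p)$ is free, and the rigidity lemma forbids any $g$ from witnessing $p\le q$ (such a $g$ would yield a set in $p$ on which $g\circ f=\mathrm{id}$, making $f$ injective there, contradicting the choice of $f$). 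The main obstacle throughout is the three-coloring step in the rigidity lemma; once that partition is in hand, the rest is bookkeeping about pullbacks of sets through $f$ and $g$.
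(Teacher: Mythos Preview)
The paper does not prove this proposition at all; it simply cites Jech's textbook as a reference. So there is no paper proof to compare against, only your proposal.

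Your argument is correct and is essentially the standard proof one finds in the literature. The rigidity lemma you isolate (if $h_*(p)=p$ then $h$ is the identity on a set in $p$) is exactly the key step, and your proof of it via the three-coloring of a fixed-point-free map is the classical one. Both parts~(1) and~(2) then follow from it just as you indicate.

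Two small remarks. First, in part~(1) the statement in the paper allows $p,q$ to be arbitrary ultrafilters, not only free ones; your argument tacitly assumes $p$ is free when you split $C$ into two infinite pieces. The principal case is of course trivial, but you should say so. Second, you only argue the nontrivial direction of~(1); the implication from the existence of a bijection to $p\le q$ and $q\le p$ is immediate from $(f^{-1})_*=(f_*)^{-1}$, which the paper records just before the proposition, so this is not a real omission. Finally, be aware that the paper's definition of $p\le q$ (namely $f_*(p)=q$) is the reverse of the convention you implicitly use in part~(2) when you write ``$q\le p$, say $q=f_*(p)$''; the mathematics is unaffected, but you should align your notation with the paper's.
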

	
	This implies that if $f:\omega\rightarrow \omega$ and $p$ is a selective ultrafilter, then $f_*(p)$ is either a fixed ultrafilter or a selective ultrafilter. If $f_*(p)$ is the ultrafilter generated by $n$, then $f^{-1}[\{n\}] \in p$, so, in particular, if $f$ is finite to one and $p$ is selective, then $f_*(p)$ is a selective ultrafilter equivalent to $p$.
	
	The existence of selective ultrafilters is independent from ZFC. Martin's Axiom for countable orders implies the existence of $2^{\mathfrak c}$ pairwise incompatible selective ultrafilters in the Rudin-Keisler order.

	The lemma below appears in \cite{tomita3} (Lemma 3.6) and will be used for the construction of homomorphisms on countable subgroups.

	\begin{lem}\label{countable.ult}
		
		Let $({p}_k:\, k \in \omega)$ be a family of pairwise incomparable selective ultrafilters. For each $k$, let $(a_{k, i}: i \in \omega)$ be a strictly increasing sequence in $\omega$ such that $\{a_{k,i}:\, i \in \omega\} \in {p}_k$ and $i<a_{k, i}$ for each $k, i \in \omega$. Then there exists $\{I_k:\, k \in \omega \}\subset[\omega]^\omega$ such that:
		
		\begin{enumerate}[label=\alph*)]
			\item $\{ a_{k,i}:\, i \in I_k\} \in {p}_k$, for each $k \in \omega$.
			
			\item $I_i\cap I_j=\emptyset$ whenever $i, j \in \omega$ and $i\neq j$, and 
			
			\item $([i,a_{k,i}]:\, k \in \omega \text{ and } i \in I_k)$ is a pairwise disjoint family.
		\end{enumerate}
		
	\end{lem}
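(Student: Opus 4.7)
The plan is to construct the $I_k$'s in two stages, combining the $P$-point and $Q$-point aspects of selectivity with the pairwise distinctness of the $p_k$'s (a consequence of Rudin--Keisler incomparability).

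\emph{Stage 1 (set disjointness).} Using pairwise distinctness of the $p_k$'s and the $P$-point property of selective ultrafilters, find $C_k \in p_k$ with $C_k \subseteq \{a_{k,i} : i \in \omega\}$ such that the family $(C_k)_{k \in \omega}$ is pairwise almost disjoint; the standard trick is to fix, for each pair $k \ne l$, $B_{k,l} \in p_k$ with $B_{l,k} = \omega \setminus B_{k,l} \in p_l$, and use the $P$-point property of each $p_k$ to force $C_k \subseteq^* B_{k,l}$ for all $l \ne k$. Then, processing $k = 0, 1, 2, \dots$ in order, set $D_k = C_k \setminus \bigcup_{l < k} D_l$: each removed piece $C_k \cap D_l \subseteq C_k \cap C_l$ is finite, so $D_k$ is cofinite in $C_k$, hence $D_k \in p_k$, and by construction the $D_k$'s are pairwise disjoint and $D_k \subseteq \{a_{k,i} : i \in \omega\}$.

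\emph{Stage 2 (interval disjointness).} Write $D_k = \{a_{k, j_n^k} : n \in \omega\}$ with $j_n^k$ strictly increasing. Apply the $Q$-point property of each $p_k$ to a suitable partition of $D_k$ into finite blocks to obtain $E_k \subseteq D_k$, $E_k \in p_k$, so sparse that, writing $E_k = \{a_{k, l_n^k} : n \in \omega\}$, the indices satisfy $l_{n+1}^k > a_{k, l_n^k}$; this gives intra-$k$ interval disjointness. Next, enumerate $\bigsqcup_k E_k$ in increasing order as $m_0 < m_1 < \cdots$ (possible because the $E_k$'s inherit pairwise disjointness from the $D_k$'s) and apply a greedy selection: keep $m_r$ exactly when its interval is disjoint from every previously kept interval. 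Let $I_k = \{l_n^k : a_{k, l_n^k} \text{ is kept}\}$; then (c) holds by the greedy rule, and (b) follows from (c).

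\emph{Main obstacle.} The subtle point is verifying (a): that $\{a_{k,i} : i \in I_k\} \in p_k$ after the greedy step. The tool is once more the $Q$-point property, applied at the partition stage with block sizes that grow fast enough --- using the sizes of the sequences $(a_{l,i})$ for $l \ne k$ --- so that only finitely many elements of each $E_k$ are ever discarded during the greedy pass. Once this is granted, $\{a_{k,i} : i \in I_k\}$ is a cofinite subset of $E_k$ and therefore remains in $p_k$.
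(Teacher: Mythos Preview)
The paper does not prove this lemma; it merely quotes it from \cite{tomita3} (Lemma~3.6). So there is no in-paper argument to compare against, and your task is really to produce a complete proof.

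Your Stage~1 is fine, and so is the intra-$k$ thinning in Stage~2: the $P$-point step gives pairwise disjoint $D_k\in p_k$ inside $\{a_{k,i}:i\in\omega\}$, and a $Q$-point application yields $E_k\subseteq D_k$ whose own intervals $[l^k_n,a_{k,l^k_n}]$ are pairwise disjoint. The gap is exactly where you flag it --- the greedy pass --- and it is a genuine gap, not just a missing sentence.

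The assertion that ``only finitely many elements of each $E_k$ are ever discarded'' does not follow from choosing ``block sizes that grow fast enough using the sizes of the sequences $(a_{l,i})$ for $l\ne k$''. Even with a single universal interval partition $\omega=\bigcup_m[N_m,N_{m+1})$ satisfying $N_{m+1}>a_{l,N_m}$ for all $l\le m$ (so that every interval $[i,a_{k,i}]$ with $a_{k,i}\in E_k\cap[N_m,N_{m+1})$ is contained in $[N_{m-1},N_{m+1})$), each block still carries up to $m$ competitors, one from each $E_l$ with $l<m$. Nothing prevents the greedy order from systematically keeping, say, the $E_1$-element and discarding the $E_0$-element in \emph{every} block: if the $E_1$-endpoint in block $m$ is always slightly smaller than the $E_0$-endpoint but their intervals overlap, then $E_1$'s point is processed first and kept, and $E_0$'s is discarded. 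This kills all but finitely many of $E_0$, so $\{a_{0,i}:i\in I_0\}\notin p_0$.

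Note also that your argument uses only pairwise \emph{distinctness} of the $p_k$ (in Stage~1), never the stronger hypothesis of Rudin--Keisler incomparability. That hypothesis is there for a reason: the coordination between different $k$'s at the interval level is exactly where it is used in the cited proof. One workable route is to push the ultrafilters forward along the finite-to-one block-index map to get pairwise incomparable selective ultrafilters on the set of block indices, obtain pairwise disjoint large sets there, and then pull back --- so that elements of $E_k$ and $E_l$ for $k\ne l$ live in \emph{different} blocks, not merely at different points. A further $Q$-point thinning then separates blocks by at least~$2$ and gives (c) outright, with no greedy step and no ``finitely many discards'' claim. Alternatively, a recursive construction handling $p_0,\dots,p_n$ at stage $n$ and appealing to selectivity at the end also works. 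Either way, the greedy shortcut as written does not close.
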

	
\section{Arc homomorphisms} \label{section.arc}

Throughout this section we fix a nice subgroup $\mathbf W$ of $\mathbb W$. 

In \cite{bellini&boero&castro&rodrigues&tomita}, we introduced the concept of arc homomorphism to construct homomorphisms from countable subgroups of $\mathbf W$ to $\mathbb T$.
This technique replaces the arc functions that were used in \cite{tomita2015} to treat free Abelian groups.

In this section, we recall this concept and some of the useful results. More details can be seen in \cite{bellini&boero&castro&rodrigues&tomita}.

First, we define some subgroups of $\mathbf{W}$. A countable subset of $\mathfrak c$ generates a countable subset of $\mathbf{W}$. We may restrict this subgroup to the elements which are annihilated by an integer $K$. Formally:

\begin{defin}Given  $E$ a subset of $\mathfrak c$, denote by 

$$\mathbf{W}_{E} = \{ w \in \mathbf{W}:\, \supp w \subseteq (E^0 \times \omega) \cup E^1\}.$$

Given positive integer $K$, we define $$\mathbf{W}_{(K)} =\{ w \in \mathbf{W}:\,  \exists u \in {\mathbf Q}, w=[u] \text{ and } Ku \in \mathbf Z \}.$$ 

Finally denote $$\mathbf{W}_{E,K}= \mathbf{W}_E \cap \mathbf{W}_{(K)}.$$
\end{defin}

In particular, if $E\subseteq P_0$, then $\mathbf{W}_{E, K}$ is the subgroup of ${(\mathbb Q/\mathbb Z)}^{(\vec{P_0} \cap (E\times \omega))}$ of the elements which are annihilated by $K$, and if $E\subseteq P_1$, then $\mathbf{W}_{E, K}$ is the subgroup of $\mathbb Q^{(E)}$ of the elements that become an element of $\mathbb Z^{(E)}$ when multiplied by $K$. Also, notice that $\mathbf{W}_{E, K}=\mathbf{W}_{E^0, K}\oplus\mathbf{W}_{E^1, K}$ is a finitely generated group whenever $E$ is finite.

Now we recall the definition of an $(E,K,\epsilon)$-arc homomorphism.

\begin{defin}Given a positive real $\epsilon$, a subset $E$ of $\mathfrak c$ and a positive integer $K$, an $(E, K, \epsilon)$-arc homomorphism is a pair $\phi=(\phi^0, \phi^1)$ where:

\begin{enumerate}[label=\alph*)]
\item $\phi^0: \mathbf{W}_{E^0, K}\to \mathbb T$ is a homomorphism,
\item $\phi^1: \{\frac{1}{K}\chi_\xi:\, \xi \in E^1 \}\rightarrow \mathcal B$,
\item $\phi^1(\frac{1}{K}\chi_\xi)$ is an arc of length $\epsilon$, for every $\xi \in E^1$.\end{enumerate}\end{defin}

$\phi^1$ can naturally be extended to $\mathbf{W} _{E^1,K}$ as follows:

\begin{defin}Let $E\subseteq \mathfrak c$, $K$ be a positive integer, $\epsilon>0$, $\phi=(\phi^0, \phi^1)$ be an $(E, K, \epsilon)$-arc homomorphism and $w \in \mathbf{W}_{E,K}$. Then we define:

$$\hat\phi(w)=\phi^0(w^0)+\sum_{\xi \in \supp w^1}Kw(\xi)\phi^1\left(\frac{\chi_\xi}K\right)$$

We define the empty sum of arcs to be the set $\{0+\mathbb Z\}$.
\end{defin}

Notice that if $\supp w \subseteq \vec{P_0}$ then $\hat \phi(w)=\{\phi^0(w)\}$.

As in \cite{bellini&boero&castro&rodrigues&tomita}, the following notion of extension will be very useful:

\begin{defin}
Given $\epsilon', \epsilon>0$, positive integers $K, K'$, subsets $E, E'$ of $\mathfrak c$, $\phi=(\phi^0, \phi^1)$ an $(E, K, \epsilon)$-arc homomorphism and $\psi=(\psi^0, \psi^1)$ an $(E', K', \epsilon')$-arc homomorphism, we say that $\psi < \phi$ ($\psi$ extends $\phi$) iff:

\begin{enumerate}[label=\alph*)]
\item $E\subseteq E'$, $K|K'$ and $\epsilon'\leq \epsilon$,
\item $\phi^0\subseteq \psi^0$,

\item  $\cl\left(\frac{K'}{K}\psi^1(\frac{1}{K'}\chi_\xi)\right) \subseteq \phi^1(\frac{1}{K}\chi_\xi)$, for each $\xi \in E^1$.

\item $m!\frac{K'}{K}\psi^1(\frac{\chi_m}{K'})$ is contained in the arc $(-\frac\epsilon {2K}, \frac \epsilon {2K})+\mathbb Z$, for each $m \in (E' \cap \omega) \setminus E$.
\end{enumerate}
\end{defin}

The following proposition appears in \cite{bellini&boero&castro&rodrigues&tomita} and its  proof is straightforward.

\begin{prop}\label{homomorphismconstructor}Suppose $(\phi_n: n \in \omega)$ is a sequence of $(E_n, K_n, \epsilon_n)$-arc homomorphisms such that: $\phi_{n+1}<\phi_n$ for every $n \in \omega$; $(\epsilon_n: n \in \omega)\rightarrow 0$; and each positive integer divides cofinitely many elements of $(K_n: n \in \omega)$. Let $E=\bigcup_{n \in \omega} E_n$. Then:

\begin{enumerate}
\item For every $w \in \mathbf{W}_E$, $\bigcap\{\hat \phi_n(w): w \in \mathbf{W}_{E_n, K_n}\}$ is a singleton.
\item The function $\psi: \mathbf{W}_E\to \mathbb T$ so that $\psi(w) \in \bigcap\{\hat \phi_n(w): w \in \mathbf{W}_{E_n, K_n}\}$ is a group homomorphism.
\item If $\omega\subseteq E$ and $E_n$ is finite for every $n$, then for every positive integer $S$, $(\psi\left(\frac{m!}{S}\chi_m\right): m \in \omega)$ converges to $0\in \mathbb T$.
\end{enumerate} \qed
\end{prop}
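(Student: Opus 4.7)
The plan is to prove (1), (2), (3) in sequence by invoking condition (c) of the extension relation for nestedness, condition (d) for the vanishing in (3), and the divisibility hypothesis on $(K_n)$ to ensure that every $w \in \mathbf{W}_E$ eventually lies in $\mathbf{W}_{E_n,K_n}$.

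For (1), I would fix $w \in \mathbf{W}_E$, pick $n_0$ with $\supp w \subseteq E_{n_0}$, and enlarge $n_0$ (using that each positive integer divides cofinitely many $K_n$) so that $w \in \mathbf{W}_{E_n,K_n}$ for all $n \ge n_0$. For each $\xi \in \supp w^1$, writing the integer $K_n w(\xi) = K_{n_0}w(\xi)\cdot (K_n/K_{n_0})$ and applying (c) coordinate-wise gives $K_n w(\xi)\phi_n^1(\chi_\xi/K_n) \subseteq K_{n_0}w(\xi)\phi_{n_0}^1(\chi_\xi/K_{n_0})$, while $\phi_n^0(w^0) = \phi_{n_0}^0(w^0)$ by the extension of the homomorphism part; summing these inclusions shows $\overline{\hat\phi_{n+1}(w)} \subseteq \hat\phi_n(w)$ for all $n \ge n_0$, so $\bigcap_n \hat\phi_n(w)$ is a non-empty closed sub-arc of $\mathbb T$. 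The main obstacle is showing this intersection is a singleton, which reduces to showing that the total arc length $\sum_{\xi \in \supp w^1}|K_n w(\xi)|\,\epsilon_n$ of $\hat\phi_n(w)$ tends to $0$; I expect this to follow from the strict closure-containment in (c) together with $\epsilon_n \to 0$ and the unbounded growth of $K_n/K_{n_0}$ forced by the divisibility hypothesis, pushing $K_n\epsilon_n \to 0$ along the sequence.

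For (2), let $\psi(w)$ be the unique element of $\bigcap_n \hat\phi_n(w)$ given by (1). Additivity follows from the compatibility of $\hat\phi_n$ with Minkowski sums: since $\phi_n^0$ is a homomorphism and the arc part is additive in the integer-coefficient sense, one has $\hat\phi_n(w)+\hat\phi_n(w') \subseteq \hat\phi_n(w+w')$ for $w, w' \in \mathbf{W}_{E_n,K_n}$, whence $\psi(w)+\psi(w')$ lies in $\bigcap_n \hat\phi_n(w+w') = \{\psi(w+w')\}$. Since $\mathbf{W}_E = \bigcup_n \mathbf{W}_{E_n,K_n}$, this gives a homomorphism on all of $\mathbf{W}_E$.

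For (3), observe that for $m \ge S$ the element $w_m = \frac{m!}{S}\chi_m$ is an integer multiple of $\chi_m$. Since each $E_n$ is finite and $\omega \subseteq E$, each $m \in \omega$ first enters some $E_{n_m}$ with $n_m \to \infty$ as $m \to \infty$. Applying (d) to the pair $\phi_{n_m} < \phi_{n_m-1}$ at the newly added coordinate $m$ yields
\[
m!\,(K_{n_m}/K_{n_m-1})\,\phi_{n_m}^1(\chi_m/K_{n_m}) \;\subseteq\; \left(-\tfrac{\epsilon_{n_m-1}}{2K_{n_m-1}},\,\tfrac{\epsilon_{n_m-1}}{2K_{n_m-1}}\right)+\mathbb Z.
\]
Multiplying by the integer $K_{n_m-1}/S$ (which exists for all sufficiently large $m$ by the divisibility hypothesis) gives $\hat\phi_{n_m}(w_m) \subseteq \left(-\tfrac{\epsilon_{n_m-1}}{2S},\,\tfrac{\epsilon_{n_m-1}}{2S}\right)+\mathbb Z$, so $\psi(w_m) \to 0$ as $m \to \infty$.
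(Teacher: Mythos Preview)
The paper does not prove this proposition; it cites \cite{bellini&boero&castro&rodrigues&tomita} and declares the proof straightforward. Your outline for (3) is correct, and your idea for (2) is essentially right, though the Minkowski-sum inclusion you state is reversed: in general one has $\hat\phi_n(w+w')\subseteq\hat\phi_n(w)+\hat\phi_n(w')$, since for an arc $A$ and integers $m,m'$ of opposite signs, $(m+m')A\subsetneq mA+m'A$. This reversed inclusion still suffices, because both $\psi(w+w')$ and $\psi(w)+\psi(w')$ lie in every $\hat\phi_n(w)+\hat\phi_n(w')$, whose diameter tends to $0$ once (1) is known.

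The genuine gap is in (1). You correctly isolate the obstacle --- one needs $K_n\epsilon_n\to 0$ --- and then write that you ``expect'' this to follow from condition~(c), from $\epsilon_n\to 0$, and from the divisibility hypothesis. It does not. Condition~(c) forces only $K_{n+1}\epsilon_{n+1}<K_n\epsilon_n$, a strictly decreasing sequence, and nothing else in the hypotheses prevents it from having a positive limit. Concretely: take $E_n=\{\xi\}$ for a fixed $\xi\in P_1\setminus\omega$, $K_n=n!$, $\epsilon_n=\bigl(\tfrac{1}{8}+2^{-n-3}\bigr)/n!$, and let $\phi_n^1(\chi_\xi/K_n)$ be the arc of length $\epsilon_n$ centred at $0$. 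All hypotheses of the proposition hold (every positive integer divides cofinitely many $n!$; condition~(c) holds since $(n+1)\epsilon_{n+1}<\epsilon_n$), yet $\hat\phi_n(\chi_\xi)$ is the arc of length $K_n\epsilon_n=\tfrac{1}{8}+2^{-n-3}$ centred at $0$, and $\bigcap_n\hat\phi_n(\chi_\xi)=[-\tfrac{1}{16},\tfrac{1}{16}]+\mathbb Z$ is not a singleton. So the proposition as stated here is missing a hypothesis such as $K_n\epsilon_n\to 0$. In the paper's sole application (the proof of Proposition~\ref{countable.hom}), condition~(x) together with the bound $\mathcal E(\dots)\le\epsilon^*/K'$ from Theorem~\ref{arc_extension_revisited} does yield $K_t\epsilon_t\le\epsilon^*_{t-1}\to 0$, so nothing downstream is affected; but your argument for (1) cannot be completed from the hypotheses as written.
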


\section{Types of sequences} \label{section.types}

\subsection{Introduction}

Throughout this section we fix a nice subgroup $\mathbf W$ of $\mathbb W$. We will recall, from  \cite{bellini&boero&castro&rodrigues&tomita}, the definition of the 11 types of sequences in the next subsection and the theorem that states that every sequence is related to one of them. The types are defined with respect to a subgroup $G$ of $\mathbf{W}$.

We call $\mathcal H$ the class of all the sequences that are of one of the 11 types.

The theorem that we are going to need, proved in \cite{bellini&boero&castro&rodrigues&tomita}, is the following:
\begin{teo}\label{prop_subseq}
Let $f : \omega \to \mathbf Q$ be given by $f(n) = n!   \chi_n$ for every $n \in \omega$. Let $G$ be a subgroup of $\mathbf{W}$ containing $\chi_n$ for every $n \in \omega$. Let $g: \omega \to \mathbf Q$ with $[g] \in G^\omega$. 

Then there exists $h:\omega\rightarrow \mathbf Q$ such that $h\in \mathcal{H}$ or $[h]$ is a constant sequence in $G^\omega$, $c\in \mathbf Q$ with $[c] \in G$, $F \in [\omega]^{< \omega}$, $p_i, q_i \in \mathbb{Z}$ with $q_i \neq 0$ for every $i \in F$, $(j_{i}:i\in F)$ increasing enumerations of subsets of $\omega$  and $j: \omega \to \omega$ strictly increasing such that

\[g \circ j = h + c + \sum_{i \in F} \frac{p_i}{q_i}   f\circ j_{i}\]

with $q_i\leq j_i(n)$ for each $n \in \omega$ and $i \in F$ (which implies $[\frac{p_i}{q_i} f\circ j_i] \in G^\omega$ since $q_i|((j_i(n))!)$ for each $i \in F$ and $n \in \omega$).\qed
\end{teo}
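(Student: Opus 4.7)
The plan is to produce the decomposition via a finite nested sequence of Ramsey-style refinements of $g$ to a subsequence, gradually stabilizing the combinatorial shape of the supports $\supp g(n)$ and the coordinate-wise rational values, and then peeling off any obstructive $\omega$-indexed contributions as multiples of $f\circ j_i$.

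First I would thin $g$ so that $(\supp g(n):n\in\omega)$ forms a $\Delta$-system with some root $R\subseteq\mathfrak c$; write $\supp g(n)=R\cup B_n$ with the $B_n$ pairwise disjoint. On the root $R$ (which is finite after the thinning), each coordinate sequence $(g(n)(\xi):n\in\omega)$ takes values in the countable set $\mathbb Q$, so by a further countable Ramsey refinement I may assume every such coordinate sequence is either eventually constant (and I absorb the limiting constant rational into the element $c\in\mathbf Q$, noting $[c]\in G$ because the pointwise limit comes from elements of $G$) or else moves in a way that can be swept into the residual sequence $h$.

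Next I split $B_n$ into the three pieces $B_n^0=B_n\cap(P_0\times\omega)$, $B_n^{1,*}=B_n\cap(P_1\setminus\omega)$, and $B_n^{1,\omega}=B_n\cap\omega$. The first piece lives in the torsion part, the second contributes to genuine $\mathbb Q$-rank in $\mathbf W^1$, and the third is the only source of trouble, since $f(m)=m!\chi_m$ is concentrated there. By thinning so that $|B_n^{1,\omega}|$ is a constant $k$ and enumerating $B_n^{1,\omega}=\{j_1(n),\dots,j_k(n)\}$ increasingly, I obtain strictly increasing $j_i:\omega\to\omega$. A further Ramsey refinement on each of the $k$ value-sequences $(g(n)(j_i(n)):n\in\omega)\subseteq\mathbb Q$ splits $\{1,\dots,k\}$ into two sets: indices $i\in F$ where the values stabilize at some fixed rational $\frac{p_i}{q_i}$ in reduced form, and indices $i\notin F$ where the reduced denominators tend to infinity. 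For the first kind, by further thinning so that $j_i(n)\ge q_i$ (automatic for large $n$), the subtracted term $\frac{p_i}{q_i}f(j_i(n))=\frac{p_i\cdot j_i(n)!}{q_i}\chi_{j_i(n)}$ is an integer multiple of $\chi_{j_i(n)}$, hence its class under $[\,\cdot\,]$ lies in $G$; for the second, the wildly growing denominator becomes a feature of the residual $h$.

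Having defined
\[h \;=\; g\circ j \;-\; c \;-\; \sum_{i\in F}\frac{p_i}{q_i}\,f\circ j_i,\]
all $\omega$-indexed contributions with bounded denominator have been eliminated, so the behavior of $h$ falls into a restricted menu: bounded or unbounded order in the $P_0$-piece, bounded or unbounded denominator in the $P_1$-piece, constant or divergent support, and whether the $\omega$-tail is trivial or has denominators tending to infinity. A final Ramsey refinement along each of these finitely many binary choices forces $h$ into exactly one of the eleven configurations that define the class $\mathcal H$, unless every coordinate of $[h]$ stabilizes pointwise in $G$, in which case $[h]$ is (after one more thinning) a constant sequence in $G^\omega$. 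The main obstacle is that this last step must match the particular eleven-type catalog of \cite{bellini&boero&castro&rodrigues&tomita} exhaustively; ensuring that the choices made in Steps 1--3 leave no residual behavior outside the catalog is the combinatorial heart of the argument, and is precisely where the concrete case-analysis of that paper does the work.
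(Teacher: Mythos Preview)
First, note that the present paper does not prove this theorem; it is stated without proof and attributed to \cite{bellini&boero&castro&rodrigues&tomita}, so there is no in-paper argument to compare against. Your closing paragraph already concedes that the exhaustive eleven-case verification lives in that reference.

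Your sketch does, however, contain a genuine error in the treatment of the $\omega$-part. You track the raw coordinate values $g(n)(j_i(n))$ and place $i\in F$ when these stabilize at some $\tfrac{p_i}{q_i}$; but then subtracting $\tfrac{p_i}{q_i}f(j_i(n))=\tfrac{p_i}{q_i}\,j_i(n)!\,\chi_{j_i(n)}$ sends the $j_i(n)$-coordinate to $\tfrac{p_i}{q_i}\bigl(1-j_i(n)!\bigr)$, which is not zero and still has bounded denominator, so nothing has been ``eliminated''. The quantity that must govern the peeling is the \emph{normalized} value $g(n)(j_i(n))/j_i(n)!$: one subtracts $\tfrac{p_i}{q_i}\,f\circ j_i$ precisely when this ratio tends to a nonzero rational $\tfrac{p_i}{q_i}$, leaving a residue whose normalized value tends to $0$ (feeding into type~6) or vanishes. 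This is exactly why types~5--9 are all phrased in terms of $g(n)(M)/M!$. Your proposed dichotomy ``the values stabilize at a rational, or the reduced denominators tend to infinity'' is also not exhaustive even as a statement about sequences in $\mathbb Q$ (a sequence of unbounded integers satisfies neither), and it is the wrong dichotomy for this problem. Finally, you assume $|B_n^{1,\omega}|$ can be thinned to a constant $k$; when these sizes are unbounded no such thinning exists, and that branch is precisely what produces type~9, which your outline never reaches.
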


A discussion on the types can be seen in \cite{bellini&boero&castro&rodrigues&tomita}.

To state the definition of the 11 types, the following notation is useful:

Given $w \in \mathbf Q$, we call $w^{1,0}$ and $w^{1,1}$ the natural projections of $w$ into $\mathbb Q^{(\omega)}$ and $\mathbb Q^{(P_1\setminus \omega)}$.

Similarly, given $g:\omega\to \mathbf{W}$ (or $\mathbf Q$), we define $g^{1, 0}$ and $g^{1, 1}$ so that  $g^1=g^{1, 0}+g^{1, 1}$, with $ g^{1, 0}\in ({\mathbf Q}_ \omega)^\omega$ and $ g^{1, 1}\in ({\mathbf Q}_{P_1 \setminus \omega})^\omega$.

\subsection{The types} The types are grouped by where the main element of the support is.

\subsubsection{The types related to $P_1 \setminus \omega$}

\begin{defin}Let $G$ be a subgroup of $\mathbf{W}$. We define the first three types of sequences (with respect to $G$) as follows:\label{def_tipos_P1}
Let $g : \omega \to \mathbf Q$ be such that $[g]\in G^\omega$. \\

We say that $g$ is \emph{of type 1} if $\supp g^{1,1}(n) \setminus \bigcup_{m < n} \supp g^{1,1}(m) \neq \emptyset$, for every $n \in \omega$.\\

We say that $g$ is \emph{of type 2} if $q(g^{1,1}(n)) > n$, for every $n \in \omega$.\\

We say that $g$ is \emph{of type 3} if $\{q(g^{1,1}(n)) : n \in \omega\}$ is bounded and $|p(g^{1,1}(n))| > n$, for every $n \in \omega$.
\end{defin}

\subsubsection{The types related to $\omega$}

\begin{defin}
Let $G\subseteq \mathbf{W}$ be a subgroup. Let $g : \omega \to \mathbf Q$ be such that $[g] \in G^\omega$. Then we define types 4 to 9 (with respect to $G$) as follows: \\

We say that $g$ is \emph{of type 4} if $q(g(n)) > n$, for every $n \in \omega$.\\

We say that $g$ is \emph{of type 5} if there exists $M \in \bigcap_{n \in \omega} \supp g^{1,0}(n)$ such that $\{q(g(n)) : n \in \omega\}$ is bounded and $|p(g(n)(M))| > n$, for every $n \in \omega$.\\

To define types 6, 7 and $8$, suppose $g$ is  such that for each $n \in \omega$, there exists $M_n \in \supp g^{1,0}(n) \setminus \cup_{m < n} \supp g^{1,0}(m)$ such that \[\left( \frac{g(n)(M_n)}{M_n !} : n \in \omega \right)\]

\noindent
is a 1-1 sequence that converges to some $ u \in  (\mathbb R\setminus \mathbb Q)\cup \{-\infty, 0, \infty\}.$
\\

We say that $g$ is of \emph{type 6} if $u=0$.\\

We say that $g$ is of \emph{type 7} if $u \in \mathbb R\setminus \mathbb Q$.\\

We say that $g$ is of \emph{type 8} if $u$ is $\infty$ or $-\infty$. \\

We say that $g$ is \emph{of type 9} if $\left\{ \dfrac{g(n)(M)}{M!} : M \in \supp g^{1,0}(n), n \in \omega \right\}$ is finite and $|\supp g^{1,0}(n)| > n$ for every $n \in \omega$.
\end{defin}

\subsubsection{The types related to $\vec{P_0}$}

\begin{defin}
We define types 10 and 11 (with respect to $G$) as follows:
Let $g : \omega \to \mathbf Q$ be such that $[ g]\in (\mathbf{W}^0)^\omega \cap   G^\omega$. \\

We say that $g$ is \emph{of type 10} if $q(g^0(n)) > n$, for every $n \in \omega$.\\

We say that $g$ is \emph{of type 11} of order $k$ if  the family $\{[g(n)]: n \in \omega\}$ is an independent family whose elements have a fixed order $k$, for some positive integer $k$.
\end{defin}

 \section{Extensions of Arc homomorphisms } \label{section.arc.hom}
 
 Throughout this section we fix a nice subgroup $\mathbf W$ of $\mathbb W$.
 
We will state theorems on extensions of arc homomorphisms related to the types which will be important to build homomorphisms in countable subgroups of $\mathbf W$. We have to make an estimate for the sizes of the output arcs compared to the input arcs. 

In \cite{bellini&boero&castro&rodrigues&tomita}, the step for the construction of the arc homomorphisms was divided in two theorems. Theorem 4.1 was used to treat sequences of types 1 to 10, and Theorem 4.2 was used to treas sequences of type 11.
Here, we will rewrite those theorems from \cite{bellini&boero&castro&rodrigues&tomita} in a unique statement. In fact, the roles of $\epsilon'$, $K'$, $k$ and $\epsilon^*$ as stated below is a bit different than the roles they have in Theorems 4.1 and 4.2 of  \cite{bellini&boero&castro&rodrigues&tomita}, however, the reader may verify that the version of the theorem as stated below is a trivial consequence of those previous theorems.

\begin{teo} \label{arc_extension_new}Let the following be given:
\begin{enumerate}[label=\roman*)]
    \item $E\in [\mathfrak c]^{<\omega}$,
    \item $K$ a positive integer,
    \item $\epsilon^*,\epsilon$ positive reals with $\epsilon<\frac{1}{2}$,
    \item $\phi$ an $(E, K, \epsilon)$-arc homomorphism,
    \item $h$ a sequence of type 1 to 11,
    \item $\gamma>0$ if $h$ is of type 1 to 10,
    \item $k$ as the order of $h$, if $h$ is of type 11, or $k=1$, if $h$ is not of type 11.
\end{enumerate}

Then there exists a cofinite set $A \subseteq \omega$ such that for every $n \in A$, for every finite $E'\supseteq E$ and every positive multiple $K'$ of $K$ such that $\left[\frac{1}{K}h(n)\right] \in \mathbf W_{E',K'}$, and for every arc $U$ of length at least $\gamma$ (if $h$ is of type 1-10) or $r \in \mathbb T$ of order $k$ (if $h$ is of type 11), there exist $\epsilon'\leq\epsilon, \frac{\epsilon^*}{K'}$ and an $(E', K', \epsilon')$-arc homomorphism $\phi' < \phi$ such that:

\begin{itemize}
\item If $h$ is of one of the types 1 to 10, $\hat{\phi'}\left(\left[\frac{1}{K}h(n)\right]\right)\subseteq U$.
\item If $h$ is of type 11, $\left(\phi'\right)^0\left(\left[h(n)\right]\right)=r$.
\end{itemize}\qed
\end{teo}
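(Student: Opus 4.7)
My plan is to reduce the statement to Theorems 4.1 and 4.2 of \cite{bellini&boero&castro&rodrigues&tomita} by splitting on whether $h$ is of type 1--10 or of type 11. If $h$ is of type 1 to 10, I would invoke the earlier Theorem 4.1, which already produces, for cofinitely many $n \in \omega$, an extension $\phi'$ with $\hat{\phi'}([h(n)/K]) \subseteq U$ for any preassigned arc $U$ of length at least $\gamma$. The only new requirement here is the refined bound $\epsilon' \leq \min\{\epsilon, \epsilon^*/K'\}$ on the length of the output arcs. Since the existing theorem allows one to choose $\epsilon'$ below any prescribed positive quantity (the extension $\phi'$ can always be further refined by shrinking its arcs within themselves, preserving the inclusions that define $\phi' < \phi$), I would simply pick $\epsilon'$ below both $\epsilon$ and $\epsilon^*/K'$ before performing the extension.

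If $h$ is of type 11 of order $k$, the earlier Theorem 4.2 produces an extension $\phi'$ such that $(\phi')^0([h(n)]) = r$ for any preassigned $r \in \mathbb T$ of order $k$. Again the bound $\epsilon' \leq \min\{\epsilon, \epsilon^*/K'\}$ is obtained by choosing sufficiently small arcs when constructing the $P_1$-part of $\phi'$. Note that $[h(n)] \in \mathbf{W}^0$ in this case, so the relevant condition involves only $(\phi')^0$; the arcs assigned on the $P_1$ side can be taken as small as desired independently of the type-11 condition, which simplifies the verification.

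The principal technical point I expect to face is checking that the flexibility in the previous theorems is really enough to enforce the new bound $\epsilon' \leq \epsilon^*/K'$ while still satisfying all four conditions a) to d) in the definition of $\phi' < \phi$. In particular, condition d), which requires $m!(K'/K)\psi^1(\chi_m/K')$ to lie inside the arc $(-\epsilon/(2K),\epsilon/(2K)) + \mathbb Z$ for $m \in (E' \cap \omega) \setminus E$, already forces the new arcs to be small on the ``new'' coordinates, so shrinking further to length at most $\epsilon^*/K'$ costs nothing. Altogether the statement should reduce without serious additional work to the two previous theorems, as the authors claim.
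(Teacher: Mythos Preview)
Your proposal is correct and matches the paper's own treatment: the paper does not give a proof of this theorem but simply states that it is a trivial consequence of Theorems~4.1 and~4.2 of \cite{bellini&boero&castro&rodrigues&tomita}, which is precisely the reduction you outline (split on types 1--10 versus type~11, then shrink the output arcs to enforce $\epsilon' \leq \min\{\epsilon, \epsilon^*/K'\}$).
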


We will improve Theorem \ref{arc_extension_new} so that $\epsilon'$ will no longer depend on $U$ and $\phi$, but only in the size of $U$. First, we prove a lemma.

\begin{lem}\label{lemma_finite_homomorphisms}Given $E\in [\mathfrak c]^{<\omega}$, $K$ a positive integer, $\epsilon$ a positive real, there exists a finite set $\mathcal S$ of $(E, K, \frac{\epsilon}{2})$-homomorphisms such that for every $(E, K, {\epsilon})$-homomorphism $\phi$ there exists $\psi \in \mathcal S$ such that $\psi<\phi$.
\end{lem}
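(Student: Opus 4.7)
The plan is to exploit the finiteness of the underlying algebra plus compactness of $\mathbb T$, reducing both components of an $(E, K, \epsilon)$-arc homomorphism to finitely many choices. For the homomorphism component $\phi^0$: since $E^0$ is finite and $\vec P_0 \cap (E^0 \times \omega) = \bigcup_{\xi \in E^0}\{\xi\}\times n_\xi$ is finite, and every element of $\mathbf W_{E^0,K}$ is annihilated by $K$, the group $\mathbf W_{E^0,K}$ is finite. Any homomorphism $\phi^0\colon \mathbf W_{E^0,K}\to \mathbb T$ lands in the finite group $\mathbb T[K]$, so the set $\mathcal H_0$ of all such homomorphisms is finite.

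For the arc-valued component $\phi^1$, an open arc in $\mathbb T$ of fixed length $\epsilon$ is parameterized by its center in the compact metric space $(\mathbb T, \delta)$. Fix a finite $\tfrac{\epsilon}{4}$-net $\{c_1,\dots,c_N\}\subseteq \mathbb T$ and, for each $\xi \in E^1$ and each $i \leq N$, let $A_{\xi,i}$ be the open arc of length $\tfrac{\epsilon}{2}$ centered at $c_i$. A direct metric check shows that if $U$ is an open arc of length $\epsilon$ with center $c_U$ and $\delta(c_U, c_i) < \tfrac{\epsilon}{4}$, then $\cl(A_{\xi,i})$, being a closed arc of half-length $\tfrac{\epsilon}{4}$ about a point within $\delta$-distance $<\tfrac{\epsilon}{4}$ of $c_U$, is contained in the open arc $U$.

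I would then take $\mathcal S$ to be the finite set of all $(E, K, \tfrac{\epsilon}{2})$-arc homomorphisms $\psi$ with $\psi^0 \in \mathcal H_0$ and $\psi^1(\tfrac{1}{K}\chi_\xi) \in \{A_{\xi,1},\dots,A_{\xi,N}\}$ for each $\xi \in E^1$. Given an arbitrary $(E, K, \epsilon)$-arc homomorphism $\phi$, define $\psi \in \mathcal S$ by $\psi^0 = \phi^0$ and, for each $\xi \in E^1$, $\psi^1(\tfrac{1}{K}\chi_\xi) = A_{\xi, i(\xi)}$, where $i(\xi)$ is chosen so that $c_{i(\xi)}$ lies within $\tfrac{\epsilon}{4}$ of the center of $\phi^1(\tfrac{1}{K}\chi_\xi)$. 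Verifying $\psi < \phi$ with $E' = E$, $K' = K$, $\epsilon' = \tfrac{\epsilon}{2}$ is then routine: condition (a) of the extension definition is immediate, (b) holds by $\psi^0 = \phi^0$, (c) follows from the previous paragraph, and (d) is vacuous since $(E' \cap \omega)\setminus E = \emptyset$. There is no real obstacle here; the only subtlety is the elementary metric verification in the second paragraph, which is a direct computation in $(\mathbb T, \delta)$.
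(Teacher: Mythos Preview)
Your proof is correct and follows essentially the same approach as the paper's: both use that $\mathbf W_{E^0,K}$ is finite to handle $\phi^0$, and a finite $\tfrac{\epsilon}{4}$-dense set in $\mathbb T$ to discretize the centers of the arcs in $\phi^1$, taking $\mathcal S$ to be the product of these finite sets of choices. Your version is simply more explicit in verifying the extension conditions (a)--(d), which the paper leaves to the reader.
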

\begin{proof}
Let $T\subseteq \mathbb T$ be a finite $\frac{\epsilon}{4}$-dense subset. Let $J^0$ be the set of homomorphisms from $\mathbf W_{E^0, K}$ to $\mathbb T$, which is finite, and let $J^1$ be the set of all functions from $E^1$ to $T$, which is also finite. Let $J=J^0\times J^1$. For each $j=(f_j, z_j) \in J$, let $\phi_j$ be the $(E, K, \frac{\epsilon}{2})$-arc homomorphism such that $\phi_j^0=f_j$ and $\phi_j^1(\frac{1}{K}\chi_\xi)$ be the arc centered in $z_j(\xi)$ of length $\frac{\epsilon}{2}$. Let $\mathcal S=\{\phi_j: j \in J\}$.
\end{proof}

Now we prove a new version Theorem \ref{arc_extension_new} that we need later.

\begin{teo}\label{arc_extension_revisited}
Let the following be given:
\begin{enumerate}[label=\roman*)]
    \item $E\in [\mathfrak c]^{<\omega}$,
    \item $K$ a positive integer,
    \item $\epsilon^*,\epsilon$ positive reals with $\epsilon<\frac{1}{2}$,
    \item $h$ a sequence of type 1 to 11,
    \item $\gamma>0$ if $h$ is of one of the first 10 types,
    \item if $h$ is of type 11, let $k$ be the order of $h$.
\end{enumerate}

Then there exists a cofinite set $A\subseteq \omega$ such that for every $n \in A$, for every finite $E'\supseteq E$ and positive multiple $K'$ of $K.k$ such that $\left[\frac{1}{K}h(n)\right] \in \mathbf W_{E',K'}$, there exists a positive real $\epsilon'\leq\epsilon, \frac{\epsilon^*}{K'}$ such that for every $(E, K, \epsilon)$-arc homomorphism $\phi$:

\begin{itemize}
\item if $h$ is of one of the first ten types, then for every arc $U$ of length $\geq\gamma$ there exists an $(E', K', \epsilon')$-arc homomorphism $\phi' < \phi$ such that $\hat{\phi'}\left(\left[\frac{1}{K}h(n)\right]\right)\subseteq U$.

\item if $h$ is of type 11,  then for every $r \in \mathbb T$ of order $k$ there exists an $(E', K', \epsilon')$-arc homomorphism $\phi' < \phi$ such that $(\phi')^0\left(\left[h(n)\right]\right)=r$.

\end{itemize}
\end{teo}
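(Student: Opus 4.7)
The plan is to invoke Theorem \ref{arc_extension_new} only on a finite collection of representative arc homomorphisms $\psi$ (replacing $\phi$) and on a finite collection of representative targets $V$ (replacing $U$ or $r$), take the minimum of the finitely many output values $\epsilon'_{\psi,V}$, and then exhibit the required arc homomorphism by shrinking arcs down to this minimum length.

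First, apply Lemma \ref{lemma_finite_homomorphisms} to $(E, K, \epsilon)$ to obtain a finite set $\mathcal S$ of $(E, K, \epsilon/2)$-arc homomorphisms such that every $(E, K, \epsilon)$-arc homomorphism $\phi$ admits some $\psi \in \mathcal S$ with $\psi < \phi$. Next, build a finite set $\mathcal V$ of targets: for $h$ of one of the first ten types, a finite family of arcs of length $\gamma/3$ whose centres form a $\gamma/3$-net on $\mathbb T$, so that every arc of length $\geq \gamma$ contains some $V \in \mathcal V$; for $h$ of type 11, the finite set of all $r \in \mathbb T$ of order $k$. For each $\psi \in \mathcal S$ apply Theorem \ref{arc_extension_new} with parameters $(E, K, \epsilon^*, \epsilon/2, \psi, h)$ together with $\gamma/3$ (respectively $k$), obtaining a cofinite $A_\psi \subseteq \omega$; set $A := \bigcap_{\psi \in \mathcal S} A_\psi$, still cofinite. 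For $n \in A$, finite $E' \supseteq E$, $K'$ a positive multiple of $Kk$ with $[\frac{1}{K}h(n)] \in \mathbf{W}_{E', K'}$, and each $(\psi, V) \in \mathcal S \times \mathcal V$, the theorem supplies $\epsilon'_{\psi, V} \leq \epsilon/2,\, \epsilon^*/K'$ and an $(E', K', \epsilon'_{\psi, V})$-arc homomorphism $\phi'_{\psi, V} < \psi$ hitting target $V$. Define $\epsilon'(n, E', K') := \min\{\epsilon'_{\psi, V} : (\psi, V) \in \mathcal S \times \mathcal V\}$, a positive real bounded above by $\epsilon$ and by $\epsilon^*/K'$.

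Now, given any $(E, K, \epsilon)$-arc homomorphism $\phi$ and target $U$ (or $r$), pick $\psi \in \mathcal S$ with $\psi < \phi$ and $V \in \mathcal V$ with $V \subseteq U$ (respectively $V = r$), and define $\phi''$ from $\phi'_{\psi, V}$ by keeping $(\phi'')^0 := (\phi'_{\psi, V})^0$ and replacing each arc $(\phi'_{\psi, V})^1(\frac{1}{K'}\chi_\xi)$ by its central sub-arc of length $\epsilon'(n, E', K')$. Then $\phi''$ is an $(E', K', \epsilon'(n, E', K'))$-arc homomorphism. Verifying the four clauses of $<$ yields $\phi'' < \phi$ directly: (a) and (b) are immediate; (c) follows because $(\phi'')^1(\frac{1}{K'}\chi_\xi) \subseteq (\phi'_{\psi, V})^1(\frac{1}{K'}\chi_\xi)$ implies $\cl\bigl(\tfrac{K'}{K}(\phi'')^1(\tfrac{1}{K'}\chi_\xi)\bigr) \subseteq \cl\bigl(\tfrac{K'}{K}(\phi'_{\psi, V})^1(\tfrac{1}{K'}\chi_\xi)\bigr) \subseteq \psi^1(\tfrac{1}{K}\chi_\xi) \subseteq \phi^1(\tfrac{1}{K}\chi_\xi)$ by the chain $\phi'_{\psi, V} < \psi < \phi$; (d) is analogous, the slack $\epsilon/4 < \epsilon/2$ inherited from $\phi'_{\psi, V} < \psi$ comfortably absorbing the weaker requirement against $\phi$. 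For the image condition, in types 1--10 the Minkowski-sum structure of $\hat{\,\cdot\,}$ is monotone in each arc factor, so $\hat{\phi''}([\frac{1}{K}h(n)]) \subseteq \hat{\phi'_{\psi, V}}([\frac{1}{K}h(n)]) \subseteq V \subseteq U$; in type 11, $(\phi'')^0([h(n)]) = (\phi'_{\psi, V})^0([h(n)]) = r$.

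The main obstacle is the simultaneous bookkeeping: ensuring that $\mathcal S$ actually covers all $(E,K,\epsilon)$-arc homomorphisms and that $\mathcal V$ covers all target arcs (respectively order-$k$ points), and checking that the arc-shrinking operation preserves both the $<$-relation with the original $\phi$ and the image condition. The key observation enabling the shrinking step is that the closures appearing in clause (c) of $<$ already lie strictly inside the next-level open arcs via the two-step chain $\phi'_{\psi, V} < \psi < \phi$, so no additional strict-shrinking tweak at the $\phi''$ level is required.
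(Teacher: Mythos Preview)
Your proof is correct and follows essentially the same approach as the paper's: both invoke Lemma~\ref{lemma_finite_homomorphisms} to replace the arbitrary $(E,K,\epsilon)$-arc homomorphism by one from a finite set $\mathcal S$ of $(E,K,\epsilon/2)$-arc homomorphisms, both cover the targets by a finite family (a net of short arcs for types 1--10, the finitely many order-$k$ points for type 11), apply Theorem~\ref{arc_extension_new} to each pair $(\psi,V)$, take the minimum of the resulting $\epsilon'$ values, and shrink the witnessing arc homomorphism down to that common length. The only differences are cosmetic: you use a $\gamma/3$-net with arcs of length $\gamma/3$ where the paper uses a $\gamma/4$-dense set with arcs of length $\gamma/2$, and the paper throws in an extra factor of $1/2$ when forming $\epsilon'_n$, whereas your explicit verification of clauses (c) and (d) of $<$ via the two-step chain $\phi'_{\psi,V}<\psi<\phi$ shows that no such extra halving is needed.
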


\begin{proof}

 We apply Lemma \ref{lemma_finite_homomorphisms} to obtain a finite set $\mathcal S$ of $(E, K, \frac{\epsilon}{2})$-homomorphisms such that for every $(E, K, {\epsilon})$ homomorphism $\phi$ there exists $\psi \in \mathcal S$ such that $\psi<\phi$.
 
 First, suppose $h$ is of one of the first 10 types.

 We obtain $A_\psi$ by applying  Theorem \ref{arc_extension_new} using $\psi$ instead of $\phi$, $\frac{\epsilon}{2}$ instead of $\epsilon$ and $\frac{\gamma}{2}$ instead of $\gamma$. Now fix $E', K'$. It follows that for each $n \in A_\psi$:

    (*) for every open arc $V$ of length $\geq \frac{\gamma}{2}$ there exists $\epsilon'_{n, V, \psi}\leq\frac{\epsilon}{2}, \frac{\epsilon^*}{K'}$ and an $(E', K', \frac{\epsilon'_{n, V, \psi}}2)$-arc homomorphism $\phi' < \psi$ such that  $\hat{\phi'}\left(\left[\frac{1}{K}h(n)\right]\right)\subseteq V$.\medskip

Let $A=\bigcap\{A_\psi: \psi \in \mathcal S\}$.

Let $T$ be a finite $\frac{\gamma}{4}$-dense subset of $\mathbb T$. Let $\mathcal V$ be the (finite) set of the arcs of length $\frac{\gamma}{2}$ centered in a point of $T$.

Fix $n\in A$. We apply (*) for each $V \in \mathcal V$ and $\psi \in \mathcal S$ to obtain $\epsilon'_{n, V, \psi}$ for which  there exists an $(E', K', \frac{\epsilon'_{n, V, \psi}}2)$-arc homomorphism $\phi_{n, V, \psi}' < \psi$ such that $\hat{\phi}_{n, V, \psi}'\left(\left[\frac{1}{K}h(n)\right]\right)\subseteq V$. 

Let $\epsilon_n'=\min\left\{\frac{\epsilon'_{n, V, \psi}}2: V \in \mathcal V, \psi \in \mathcal S\right\}$.

Now given $\phi$ and $U$ as in the statement of this theorem, there exists $\psi \in \mathcal S$ such that $\psi<\phi$, and $V \in \mathcal V$ such that $V\subseteq U$. We shrink the size of the arc homomorphism $\phi_{n, V, \psi}'$ to $\epsilon_n'$ and the proof is complete.

 Now suppose $h$ is of type 11.

 We obtain $A_\psi$ by applying  Theorem \ref{arc_extension_new} using $\psi$ instead of $\phi$ and $\frac{\epsilon}{2}$ instead of $\epsilon$. Now fix $E', K'$. It follows that for each $n \in A_\psi$:

    (*) for every $r \in \mathbb T$ of order $k$ there exists $\epsilon'_{n, r, \psi}\leq\frac{\epsilon}{2}, \frac{\epsilon^*}{K'}$ and an $(E', K', \frac{\epsilon'_{n, r, \psi}}2)$-arc homomorphism $\phi' < \psi$ such that  $(\phi')^{0}\left(\left[h(n)\right]\right)=r$.\medskip

Let $A=\bigcap\{A_\psi: \psi \in \mathcal S\}$.

Fix $n\in A$. We apply (*) for each $V \in \mathcal V$ and $\psi \in \mathcal S$ to obtain $\epsilon'_{n, r, \psi}$ for which  there exists an $(E', K', \frac{\epsilon'_{n, V, \psi}}2)$-arc homomorphism $\phi_{n, V, \psi}' < \psi$ such that  $(\phi')^{0}\left(\left[h(n)\right]\right)=r$. 

Let $\mathbb T_k$ be the set of all elements of $\mathbb T$ of order $k$, which is finite.
Let $\epsilon_n'=\min\left\{\frac{\epsilon'_{n, r, \psi}}2: r \in \mathbb T_k, \psi \in \mathcal S\right\}$.

Now given $\phi$ and $r$ as in the statement of this theorem, there exists $\psi \in \mathcal S$ such that $\psi<\phi$. We shrink the size of the arc homomorphism $\phi_{n, r, \psi}'$ to $\epsilon_n'$ and the proof is complete.
\end{proof}

We will fix  the objects that are obtained through the application of Lemma \ref{arc_extension_revisited} and give them a notation.

\begin{defin}For each $E \in [\mathfrak c]^{<\omega}$, $K$ positive integer, $\epsilon, \epsilon^*$ positive reals less than $1/2$, $h$ sequence of type 1-11 and $\gamma>0$, let $A(E, K, \epsilon, \epsilon^*, h, \gamma)$ be a fixed set $A$ obtained by applying the previous theorem ($\gamma$ does not matter if $h$ is of type 11).

Moreover, for each $n \in  A(E, K, \epsilon, \epsilon^*, h, \gamma)$, for every $E'\supseteq E$ and for every positive multiple $K'$ of $K.k$ satisfying $\left[\frac{1}{K}h(n)\right] \in \mathbf W_{E',K'}$, we let $\mathcal E(E, K, \epsilon, \epsilon^*, h, \gamma, n, E', K')$ be a fixed positive real $\epsilon'$ such that $\epsilon'\leq \epsilon, \epsilon^*/K'$ and such that for every $(E, K, \epsilon)$-homomorphism $\phi$:

\begin{itemize}
\item if $h$ is of one of the first ten types, then for every arc $U$ of length $\geq\gamma$ there exists an $(E', K', \epsilon')$-arc homomorphism $\phi' < \phi$ such that $\hat{\phi'}\left(\left[\frac{1}{K}h(n)\right]\right)\subseteq U$.
\item if $h$ is of type 11,  then for every $r \in \mathbb T$ of order $k$ there exists an $(E', K', \epsilon')$-arc homomorphism $\phi' < \phi$ such that $(\phi')^0\left(\left[h(n)\right]\right)=r$.
\end{itemize}
\end{defin}

\section{Homomorphisms on a countable subgroup} \label{section.countable.hom}

As in the previous sections, throughout this section we fix a nice subgroup $\mathbf W$ of $\mathbb W$. We will show how to construct useful homomorphisms on countable subgroups of $\mathbf W$. For the next proposition, given $a \in \mathbf Q$, we define:

$$\|a\|=\sum_{\xi \in \supp a}|a(\xi)|.$$

\begin{prop} \label{countable.hom}
Let $E$ be an infinite countable subset of $\mathfrak c$ with $\omega\subseteq E$, $e \in \mathbf{W}_E$ with $e\neq 0$, a countable $\mathcal G\subseteq \mathcal H$ (where $\mathcal H$ is defined with respect to $\mathbf{W}_E$) and $(p_g:\, g \in \mathcal G)$ a family of pairwise incomparable selective ultrafilters.

Fix a family $(c_g: g \in \mathcal G)$ of elements of $\mathbf{Q}_E$ such that $[c_g] \in \mathbf W_E$,  $c_g$ is a non torsion element if $g$ is of one of types from 1 to 10, and $[c_g]$ has the same order as $[g]$ if $g$ is of type 11.

Then there exists a homomorphism $\rho:\, \mathbf{W}_{E} \to \mathbb T$ such that:

\begin{enumerate}
\item $\rho (e) \neq 0+\mathbb Z$,
\item  $p_g$-$\lim(\rho([g(n)]):n \in \omega)=\rho([c_g])$, for each $g \in \mathcal G$, and
\item $\left(\rho\left(\frac{n!}{S}\chi_n\right): n \in \omega\right)$ converges to $0+\mathbb Z$, for every integer $S>0$. \qed
\end{enumerate} 
\end{prop}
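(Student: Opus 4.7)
The plan is to build $\rho$ as the limit homomorphism produced by Proposition \ref{homomorphismconstructor} from a chain $\phi_0 > \phi_1 > \cdots$ of arc homomorphisms with parameters $(E_n, K_n, \epsilon_n)$ satisfying $\bigcup_n E_n = E$, every positive integer dividing cofinitely many $K_n$, and $\epsilon_n \to 0$. Requirement (3) then comes directly from part (3) of that proposition. Requirement (1) is secured at the base: choose $E_0 \supseteq \supp e$ finite, $K_0$ with $e \in \mathbf W_{E_0, K_0}$, and an $(E_0, K_0, \epsilon_0)$-arc homomorphism $\phi_0$ with $\hat\phi_0(e)$ bounded away from $0+\mathbb Z$; since each $\hat\phi_n(e) \subseteq \hat\phi_0(e)$, this gives $\rho(e) \neq 0+\mathbb Z$.

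The main work is (2). Enumerate $\mathcal G = \{g_k : k \in \omega\}$ and pre-plan a growth schedule $(E_n, K_n, \epsilon^*_n, \gamma_n)$ independently of the indices to be chosen, with $\epsilon^*_n, \gamma_n \to 0$ fast enough that $K_n\gamma_n \to 0$. For each $k$, apply Theorem \ref{arc_extension_revisited}, whose output cofinite set and arc-width estimate depend only on the schedule (not on any specific $\phi$), to obtain a set $A_k \in p_{g_k}$. Enumerate $A_k$ increasingly as $(a_{k, i})_{i \in \omega}$ with $i < a_{k, i}$, and invoke Lemma \ref{countable.ult} to obtain pairwise disjoint $(I_k)_{k \in \omega}$ with $\{a_{k, i} : i \in I_k\} \in p_{g_k}$ and the intervals $[i, a_{k, i}]$ pairwise disjoint.

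At stage $n+1$, grow $E_n$ to include the next coordinate from the enumeration of $E$ (and, if $n \in I_k$, also $\supp c_{g_k} \cup \supp g_k(a_{k, n})$), grow $K_n$ to absorb the next required divisors, and extend $\phi_n$ to $\phi_{n+1} < \phi_n$. If $n \in I_k$ for the unique $k$, use Theorem \ref{arc_extension_revisited}: for types 1--10, force $\hat\phi_{n+1}([g_k(a_{k, n})/K_n])$ into an arc of length $\gamma_n$ centered at a $K_n$-th preimage of a point of $\hat\phi_n([c_{g_k}])$, so that $\rho([g_k(a_{k, n})])=K_n\rho([g_k(a_{k, n})/K_n])$ lands in a shrinking arc around $\rho([c_{g_k}])$; for type 11, arrange $(\phi_{n+1})^0([g_k(a_{k, n})]) = (\phi_n)^0([c_{g_k}]) = \rho([c_{g_k}])$ exactly, and this value is preserved under all further extensions. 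Since $\{a_{k, n} : n \in I_k\} \in p_{g_k}$, and the disjointness of the intervals $[n, a_{k, n}]$ prevents any collision between the simultaneous targets imposed for different $k$, this delivers requirement (2).

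The main obstacle is that Lemma \ref{countable.ult} requires pre-committing to the enumerations $(a_{k, i})$ before the arc homomorphisms are constructed; at first sight this is at odds with the fact that in Theorem \ref{arc_extension_new} the ``good'' set of indices depends on the current $\phi$. The whole point of the revisited form, Theorem \ref{arc_extension_revisited}, is to remove this dependence: the cofinite set $A_k$ and the output arc size $\epsilon'$ are functions of the input parameters alone, so fixing the parameter schedule in advance makes the scheduled $(a_{k, i})$ usable at every stage.
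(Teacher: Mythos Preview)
Your overall architecture matches the paper's: decouple the cofinite ``good'' sets from the particular arc homomorphism via Theorem~\ref{arc_extension_revisited}, schedule indices with Lemma~\ref{countable.ult}, then run the chain and invoke Proposition~\ref{homomorphismconstructor}. However, two linked steps are genuinely missing, and as written the construction does not go through.

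First, you cannot extract a single $A_k\in p_{g_k}$ from ``the schedule''. Theorem~\ref{arc_extension_revisited} returns a cofinite set $A(E,K,\epsilon,\epsilon^*,h,\gamma)$, and these parameters change with the stage $t$; in particular $\epsilon$ is an input to the theorem, yet your schedule $(E_n,K_n,\epsilon_n^*,\gamma_n)$ omits it. The paper handles this by \emph{recursively} fixing the entire numeric schedule (including $\epsilon_t$) before any $\phi$ is built: at stage $t$ one sets $A_t=\bigcap_{i\le t}A(E_t,K_t,\epsilon_t,\epsilon_t^*,g_i,\epsilon_t)\setminus(t{+}2)$ and then uses selectivity of $p_k$ a \emph{first} time to choose $a_{k,i}\in A_i$ with $\{a_{k,i}:i\in\omega\}\in p_k$. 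Only after this diagonalization does Lemma~\ref{countable.ult} apply. Your sentence ``obtain a set $A_k\in p_{g_k}$'' hides exactly this step.

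Second, the arc sizes $\epsilon_t$ cannot be picked freely nor determined on the fly. The chain must pass through the pre-chosen indices: from an $(E_{i},K_{i},\epsilon_{i})$-arc homomorphism one jumps, via the theorem, to an $(E',K',\epsilon')$-arc homomorphism with $\epsilon'=\mathcal E(E_i,K_i,\epsilon_i,\epsilon_i^*,g_k,\epsilon_i,n,E',K')$, and this must dominate the scheduled $\epsilon_{t}$ at the landing stage. The paper enforces this with the recursive clause
\[
\epsilon_t\ \le\ \min\bigl\{\mathcal E(E_i,K_i,\epsilon_i,\epsilon_i^*,g_j,\epsilon_i,n,E_t,K_t):\ j\le i\le t{-}1,\ n\in A_i\cap(t{+}1)\bigr\},
\]
so that after applying the theorem one can always shrink to the pre-planned $(E_t,K_t,\epsilon_t)$. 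Without committing to $\epsilon_t$ in advance \emph{and} tying it to the outputs $\mathcal E(\cdots)$ of earlier stages, you cannot guarantee that the extension produced at step $n$ meshes with what the next scheduled index requires; the disjoint-interval condition from Lemma~\ref{countable.ult} does not substitute for this bookkeeping.
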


Before we prove the proposition it is easy to produce the objects below satisfying conditions (i)-(xvi):

Enumerate $\mathcal G=\{g_m:\,  m\in \omega \}$  faithfully.

Let $E_0$ be a finite subset of $E$, $K_0$ be a positive integer, $\epsilon_0>0$ and $A_0$ be a cofinite subset of $\omega$ such that 

\begin{enumerate}[label=(\roman*)]

\item \label{5.6.i} $e, [c_{g_0}], [g_0(0)] \in \mathbf{W}_{E_{0}, K_{0}}$;

\item  \label{5.6.ii}$\epsilon_{0}\| e\| K_0<\frac{1}{2}$; 

\item $A_{0}=\omega \setminus \{0\}$;
\end{enumerate}
The three items above comprise a sufficient condition for the existence of an $(E_{0},K_{0},\epsilon_{0})$-arc homomorphism $\sigma$ such that $0\notin \hat\sigma(e)$, which will be used to prove condition $(1)$ for $\rho$.

Let $(E_t:\, t \in \omega)$ be an arbitrary sequence of finite subsets of $E$ such that 
\begin{enumerate}[label=(\roman*)]
\setcounter{enumi}{3}
\item $\bigcup_{t \in \omega}E_t=E$; 

\item  \label{5.6.viii}$E_{t+1}\supseteq E_{t}$, for every $t \in \omega$;

\item $\{[c_{g_{t}}]\}\cup\bigcup\{[g_m(m')]:m, m'\leq t\} \subset \mathbf{W} _{E_{t}}$, for every $t\in \omega$;
\end{enumerate}

Fix $(K_t:\, t \in \omega )$ an arbitrary sequence of integers so that:
\begin{enumerate}[label=(\roman*)]
\setcounter{enumi}{6}
\item $\left(K_t.\prod_{i\leq t}o(g_i)\right) \mid K_{t+1}$ and  $t! \mid K_t$, for every $t \in \omega$, where $o(g_i)$ is the order of $g_i$ if $g_i$ is of type 11, and 1 otherwise;

\item $\{[c_{g_{t}}]\}\cup\bigcup\{\left[\frac{1}{K_{t-1}}g_m(m')\right]:m, m'\leq t\} \subseteq \mathbf{W}_{E_{t},K_t}$, for every $t\in \omega \setminus \{0\}$;
\end{enumerate}

Recursively, we define sequences  $(\epsilon_t: t \in \omega \setminus \{0\})$, $(\epsilon_t^*: t \in \omega \setminus \{0\})$ of positive real numbers less than $\frac{1}{2}$ and a sequence of cofinite sets $(A_t: t \in \omega)$  satisfying:
\begin{enumerate}[label=(\roman*)]
\setcounter{enumi}{8}

\item $\epsilon_t^*= \frac{1}{2^{t+1}\max \{\|c_{g_l}\|: l\leq t+1\}}$;

\item  \label{5.6.x} $\epsilon_t\leq  \min\{ \mathcal E(E_{i}, K_{i}, \epsilon_{i}, \epsilon_{i}^*, g_j, \epsilon_{i}, n, E_t, K_t):\, j\leq i \leq t-1 \text{ and } n \in A_i \cap (t+1)\}$;

\item $A_t=\cap _{i\leq t}A(E_t, K_t, \epsilon_t, \epsilon_t^*, g_i, \epsilon_t) \setminus (t+2)$.
\end{enumerate}

Denote the ultrafilter $p_{g_k}$ as $p_k$.
The family $\{A_t:\, t \in \omega \} \subseteq p_k$, for each $k \in \omega$.
By the selectivity of $p_k$ there exists a strictly increasing $(a_{k,i})_{ i \in \omega}$ such that 
\begin{enumerate}[label=(\roman*)]
\setcounter{enumi}{11}
\item $a_{k,i} \in A_i$ and $i<a_{k,i}$, for each $i\in \omega$ and

\item $\{a_{k,i}:\, i \in \omega \} \in p_k$, for each $k\in \omega$.\\
\end{enumerate}
Applying Lemma \ref{countable.ult}, we have a family $(I_k:\, k\in \omega)$ of pairwise disjoint subsets of $\omega$ such that:
\begin{enumerate}[label=(\roman*)]
\setcounter{enumi}{13}
\item $\{a_{k,i}:\, i\in I_k\} \in p_k$ and 

\item $([i,a_{k,i}]:\, k\in \omega \text{ and } i \in I_k)$ is a family of pairwise disjoint sets. 
\end{enumerate}

Finally, we may also suppose, by removing finitely many elements of each $I_k$, that:

\begin{enumerate}[label=(\roman*)]
\setcounter{enumi}{15}
\item $I_k\cap(k+1)=\emptyset$ for every $k \in \omega$.
\end{enumerate}

\begin{proof}(of Proposition \ref{countable.hom})

Let $I=\bigcup_{k\in \omega} I_k$. Let $(i_n:\, i \in \omega)$ be an increasing enumeration of $I$ and let $k_n \in \omega$ be the unique $k$ such that $i_n \in I_k$ and set $a_n=a_{k_n,i_n}$. Notice that by (xii) and (xvi), for each $n$, $k_n\leq i_n<a_n$.

Fix $\phi'$ any $(E_{0}, K_{0}, \epsilon_{0})$-arc homomorphism such that 
$0 \notin \cl \hat \phi'(e)$. The existence of such arc homomorphism follows from conditions \ref{5.6.i} and \ref{5.6.ii}.  Since $i_0> 0$, we can fix an $(E_{i_0}, K_{i_0}, \epsilon_{i_0})$-arc homomorphism $\phi_{i_0}$ such that $\phi_{i_0}< \phi'$. From the inequality, it follows that:

\begin{enumerate}[label=(\alph*)]

\item $0 \notin \cl \hat \phi_{i_0}(e) $.
\end{enumerate}

Since $k_0 \leq i_0<a_0$ and $a_0 \in A_{i_0}$, we can find an $(E_{a_0},K_{a_0},\mathcal E(E_{i_0},K_{i_0},\epsilon_{i_0},\epsilon_{i_0}^*,g_{k_0},\epsilon_{i_0},a_0, E_{a_0},K_{a_0}))$-arc homomorphism
$\phi_*$ with $\hat \phi_* ([g_{k_0}(a_0)]) \subseteq \hat \phi_{i_0}([c_{g_{k_0}}])$ and  $\phi_*<\phi_{i_0}$.

Since $\epsilon_{a_0} \leq \mathcal E(E_{i_0},K_{i_0},\epsilon_{i_0},\epsilon_{i_0}^*,g_{k_0},\epsilon_{i_0},a_0, E_{a_0},K_{a_0})$,

there exists an $(E_{a_0}, K_{a_0}, \epsilon_{a_0})$-arc homomorphism $\phi_{a_0} < \phi_*$, thus $\phi_{a_0}< \phi_{i_0}$.

Set $B=\{i_n,a_n:\, n \in \omega\}=\{i_n:\, n \in \omega\}\dot\cup\{a_n:\, n \in \omega\}$ (notice that $i_0<a_0<i_1<a_1<\dots$).
We will show that
there exists a sequence $(\phi_{b}:\,  b\in B )$ such that

\begin{enumerate}[label=(\alph*)]
\setcounter{enumi}{1}

\item $\phi_{b}$ is an $(E_{b}, K_{b}, \epsilon_{b})$-arc homomorphism, for each $b \in B$,

\item  $ \phi_{i_{n+1}}< \phi_{a_n}< \phi_{i_n}$, for each $n \in \omega$, and 

\item   $\hat \phi_{a_n}([g_{k_n}(a_n)])\subseteq \hat \phi_{i_n}([c_{g_{k_n}}])$, for each $n\in \omega$.

\end{enumerate}

 Suppose that we have defined $\phi_b$ for $b \in \{ i_n,a_n:\, n \in N\}$
 satisfying the conditions $(b)-(d)$.
 
 It follows from $a_{N-1}<i_{N}$ that we can define a $(E_{i_{N}},K_{i_{N}},\epsilon_{i_{N}})$ arc homomorphism $\phi_{i_{N}}$ such that $\phi_{i_{N}}<\phi_{a_{N-1}}$. We can repeat the procedure that defined $\phi_{a_0}< \phi_{i_0}$ to obtain $\phi_{a_{N}}$ an $(E_{a_{N}},K_{a_{N}},\epsilon_{a_{N}})$ arc homomorphism such that $\phi_{a_{N}}<\phi_{i_N}$ and  $\hat \phi_{a_{N}}([g_{k_{N}}(a_{N})])\subseteq \hat \phi_{i_{N}}([c_{g_{k_N}}])$.
 Then the conditions $(b)-(d)$ are satisfied for $N$.\\

By Lemma \ref{homomorphismconstructor} and condition $(c)$, let $\rho$ be the homomorphism obtained from the intersection of the sequence $(\phi_{b}:\, b \in B)$. By $(b)$ and $(iv)$ and $(vii)$, it follows that the domain of $\rho$ is $\mathbf W_{E}$. Since $\omega \subseteq E$, it follows from Lemma \ref{homomorphismconstructor} that condition $(3)$ holds.

By $(a)$ and $\rho(e) \in \cl \hat \phi_{i_0}(e)$, it follows that $\rho(e)\neq 0$ and $(1)$ is satisfied. 

Now for each $k \in \omega$ and $i \in I_k$, let $n \in \omega$ be the unique natural such that 
$i=i_n$ and $k=k_n$. By $(d)$, it follows that $\rho([g_{k}(a_n)]), \rho([c_{g_k}]) \in \hat\phi_{i}([c_{g_k}])$. Since the arc $\hat\phi_{i}([c_{g_k}])$ has length at most $K_{i_n}\|c_{g_k}\|\epsilon_{i_n}\leq\|c_{g_k}\|\epsilon^*_{i_n-1}\leq 2^{-i_n}$, it follows that the distance between $\rho([g_{k}(a_n)])$ and $ \rho([c_{g_k}])$ is at most $2^{-i}$. Thus, since $(i_n:n \in \omega)$ is an increasing enumeration and recalling $a_n=a_{k_n,i_n}$, the 
sequence $(\rho([g_k(a_{k,i})]):\, i \in I_k)$ converges to $\rho([c_{g_k}])$.
By (xiv), $(2)$ is satisfied.
\end{proof}

\section{The immersion and the proof of the main theorem} \label{section.proof.main}

In this section, we consider an Abelian group $G$ such that $|G| = |G / T(G)| = \mathfrak{c}$.

\begin{defin}\label{niceimmerson}
Let  $G\subseteq \mathbb W$ be an Abelian group  such that $|G| = r(G) = \mathfrak{c}$ and for all $d, n \in \mathbb{N}$ with $d \mid n$, the group $dG[n]$ is either finite or has cardinality $\mathfrak{c}$.

 Let $D$ be the set of all integers $n > 1$ such that $G$ contains an isomorphic copy of the group $\mathbb{Z}_{n}^{(\mathfrak{c})}$.
 
Let $\mathbf W$ be a nice subgroup of $\mathbb W$.
 
 We say that  $G$ is {\em nicely immersed in} $\mathbf{W}$ if $G$ is a subgroup of $\mathbf W$ and there exist $L_1 \in [P_1]^{\mathfrak c}$ with $\omega \subseteq L_1$, a family $(L_n: n \in D)$ of pairwise disjoint elements of $[P_0]^{\mathfrak c}$ and a family $(y_\xi: \xi \in \bigcup_{n \in D} L_n)$ such that:

\begin{enumerate}[label=\alph*)]
\item $\{(0, \chi_{\xi}) \in \mathbf{W} : \xi \in L_1\} \subseteq G$,
\item $\{(y_{\xi}, 0) \in \mathbf{W} : \xi \in \cup_{n \in D} L_n\} \subseteq G$,
\item $\ordem(y_{\xi}) = n \ \forall \xi \in L_n, n \in D$ and
\item $\supp y_{\xi} \subseteq \{\xi\} \times \omega \ \forall \xi \in \cup_{n \in D} L_n$.
\end{enumerate}

In this case, we say $(G, \mathbf W, L, y)$ is a nicely immersed group. When no confusion may arise, we just say $G$ is a nicely immersed group.
\end{defin}

The following proposition appears in \cite{bellini&boero&castro&rodrigues&tomita} (Proposition 6.1) with the definition of nice immersion stated in it.

\begin{prop} \label{megazord}
Let  $G$ be an Abelian group  such that $|G| = r(G) = \mathfrak{c}$.

Then there exist $\mathbf{W}$ a nice subgroup of $\mathbb W$ and a group monomorphism $\varphi: G \to \mathbf{W}$ such that $\varphi[G]$ is nicely immersed in $\mathbf W$.
\end{prop}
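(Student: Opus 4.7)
My plan is to construct the immersion by combining the divisible hull embedding with a careful choice of generators. Since $|G| = r(G) = \mathfrak c$, fix a family $(g_\alpha : \alpha \in \mathfrak c) \subseteq G$ of infinite-order elements whose classes modulo $T(G)$ form a maximal $\mathbb Z$-independent subset of the torsion-free quotient. For each $n \in D$ (a countable set, being a subset of $\mathbb N$), fix a subgroup $H_n \cong \mathbb Z_n^{(\mathfrak c)}$ of $G$ together with a free $\mathbb Z_n$-basis $\{x^n_\xi : \xi \in \mathfrak c\}$, so that each $x^n_\xi$ has order exactly $n$. Using the structure of $G$'s primary components and the hypothesis that each $dG[n]$ is finite or of size $\mathfrak c$, one may arrange that the $g_\alpha$'s together with the $x^n_\xi$'s form a $\mathbb Z$-independent system inside $G$.

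Next I would set up the target. Choose pairwise disjoint sets $L_n \in [P_0]^{\mathfrak c}$ indexed by $n \in D$, which is possible since $D$ is countable and $|P_0| = \mathfrak c$, and pick $L_1 \in [P_1]^{\mathfrak c}$ with $\omega \subseteq L_1$. Enumerate $L_1 = \{\ell_\alpha : \alpha \in \mathfrak c\}$ (taking $\ell_n = n$ for $n \in \omega$) and each $L_n = \{\ell^n_\xi : \xi \in \mathfrak c\}$. Define $\vec n = (n_\xi)_{\xi \in P_0}$ by $n_\xi = 1$ uniformly, yielding the nice subgroup $\mathbf W = (\mathbb Q/\mathbb Z)^{(P_0 \times \{0\})} \oplus \mathbb Q^{(P_1)}$. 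For each $n \in D$ and $\xi \in L_n$, let $y_\xi \in \mathbf W^0$ be the element equal to $\frac{1}{n} + \mathbb Z$ at the coordinate $(\xi,0)$ and zero elsewhere, so that $\ordem(y_\xi) = n$ and $\supp y_\xi \subseteq \{\xi\} \times \omega$. Define a partial monomorphism $\varphi_0$ on the subgroup $H = \langle g_\alpha, x^n_\xi : \alpha,\xi \in \mathfrak c, n \in D \rangle \leq G$ by $\varphi_0(g_\alpha) = \chi_{\ell_\alpha}$ and $\varphi_0(x^n_\xi) = y_{\ell^n_\xi}$; injectivity follows from the $\mathbb Z$-independence of the chosen generators and the disjointness of $L_1$ and the $L_n$'s.

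The main obstacle is extending $\varphi_0$ to a monomorphism defined on all of $G$. Here I would invoke two facts. First, $\mathbf W$ is a divisible abelian group, hence injective in the category of abelian groups, so $\varphi_0$ extends to some homomorphism $\varphi : G \to \mathbf W$. Second, by replacing $H$ by its pure closure inside $G$ and simultaneously enlarging the choice of $\vec n$ (allocating additional coordinates in $P_0 \times \omega$ and fresh indices in $P_1$), one can realize the remaining part of $G$ as embedding faithfully into the divisible hull of $\varphi_0[H]$ within $\mathbf W$. The hypothesis that $dG[n]$ is either finite or of cardinality $\mathfrak c$ is precisely what guarantees that every primary component of $G$ that is not already ``used up'' by some $H_n$ is finite-dimensional over $\mathbb Z/p$ and can be absorbed into finitely many additional coordinates without disturbing the previously fixed $L_n$'s and $L_1$. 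Verification of conditions (a)--(d) of Definition \ref{niceimmerson} is then immediate from the construction: (a) holds because $\varphi(g_\alpha) = \chi_{\ell_\alpha}$ and $\omega \subseteq L_1$; (b), (c), (d) hold by the definition of $y_\xi$.
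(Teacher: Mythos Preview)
The paper does not actually prove this proposition; it cites Proposition~6.1 of \cite{bellini&boero&castro&rodrigues&tomita}. So there is no in-paper argument to compare against, and I will assess your sketch on its own.

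Your outline has the right overall shape, but the extension step in your third paragraph is a genuine gap. Invoking divisibility of $\mathbf W$ to extend $\varphi_0: H \to \mathbf W$ to some $\varphi: G \to \mathbf W$ yields a homomorphism, not a monomorphism: nothing prevents the extension from acquiring a nontrivial kernel, and your proposed repair (``replace $H$ by its pure closure and enlarge $\vec n$'') does not touch this. Even when $H$ is pure in $G$, an injective map on $H$ need not extend injectively to $G$; adding coordinates to the target cannot help, since the problem is on the source side. What is actually needed is to manufacture an embedding of $G$ in which the basis vectors $\chi_{\ell_\alpha}$ already lie in the image. One clean route: form the pushout $P$ of $G \hookleftarrow \langle g_\alpha\rangle \hookrightarrow \mathbb Q^{(L_1)}$ (the right arrow sending $g_\alpha\mapsto\chi_{\ell_\alpha}$); then $G$ embeds in $P$ with $g_\alpha$ identified with $\chi_{\ell_\alpha}$, the divisible subgroup $\mathbb Q^{(L_1)}$ splits off $P$ as a direct summand, and the torsion complement $P/\mathbb Q^{(L_1)}\cong G/\langle g_\alpha\rangle$ can then be embedded into $(\mathbb Q/\mathbb Z)^{(\vec P_0)}$. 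The analogous work on the torsion side --- arranging that $\mathfrak c$ many order-$n$ elements of $G$ land with support contained in a single column $\{\xi\}\times\omega$ --- likewise requires a careful choice of the embedding of the torsion part, not just an appeal to injectivity, and your sketch does not supply it.

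A smaller point: you invoke the hypothesis that each $dG[n]$ is finite or of size $\mathfrak c$, which does not appear in the statement of the proposition (it sits in the preamble of Definition~\ref{niceimmerson} and is presumably an implicit standing assumption). Either way, the role you assign it --- absorbing ``finite-dimensional'' leftover primary pieces into extra coordinates --- is too vague to do any work toward the injectivity problem above.
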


This proposition allow us to consider only subgroups of $\mathbb W$ which are nicely immersed.

\begin{defin}
Given a nicely immersed group $G=(G, \mathbf W, L, y)$, a frame for a countably compact group topology for $G$ is a pair $(J, h)$, where:
\begin{itemize}
    \item $J=(J_n: n \in \{1\}\cup D\}$, with $J_n \in [L_n]^{\mathfrak{c}}$ for each $n \in \{1\}\cup D$;
    \item $h=(h_{\xi} : \xi \in J_1 \cup \bigcup_{n \in D} J_n)$ is a enumeration of $\mathcal H$ so that each element of $\mathcal H$ appears $\mathfrak c$-many times;
    \item if $\xi \in J_1$, then $h_{\xi}$ is of type $i \in \{1, \ldots, 10\}$;

     \item if there exists $k \in D$ such that $\xi \in J_k$, then $h_{\xi}$ is of type 11 and $\ordem([h_{\xi}(n)]) = k$, for every $n \in \omega$;

     \item $\bigcup_{n \in \omega} \supp h_{\xi}(n) \subset (\xi \times \omega) \cup \xi$, for every $\xi \in J_1 \cup \bigcup_{n \in D} J_n$ and
       
       \item $L_1 \setminus J_1$ has cardinality $\mathfrak c$.
\end{itemize}
\end{defin}

It is easy to verify that given a nicely immersed group $G$, there exist frames for a countably compact group topology for it.

\begin{defin} Given a frame $(J, h)$ for a nicely immersed countably compact group $G=(G, \mathbf W, L, y)$, we say that a group monomorphism $\Phi:\, \mathbf W \rightarrow \mathbb T^\mathfrak c$ and a family of free ultrafilters $(p_\xi:\, \xi \in J_1\cup \bigcup_{n\in D}J_n)$
fulfill the frame if:

\begin{enumerate}
\item if $\xi \in J_1$, then the $p_\xi$-limit of sequence $(\Phi([h_{\xi}(n)]) : n \in \omega)$ is $\Phi(0, \chi_{\xi})$;

  \item if $\xi \in \bigcup_{n \in D} J_n$, then the $p_\xi$-limit of the sequence $(\Phi([h_{\xi}(n)]) : n \in \omega)$ is $\Phi(y_{\xi}, 0)$;

  \item for each $S$ positive integer, the sequence $\left( \Phi \left( 0, \frac{1}{S}   n!   \chi_{n} \right) : n \in \omega \right)$ converges to $0 + \mathbb{Z}$.
\end{enumerate}
\end{defin}

\begin{teo} \label{teo.ful.conv.seq}
Let $G$ be a nicely immersed group and $(J, h)$ be a frame for it. If $\Phi$ and $(p_\xi:\, \xi \in J_1\cup \bigcup_{n\in D}J_n)$  fulfill the frame, then $\Phi[G]$ generates a countably compact group topology on $G$ with non-trivial convergent sequences, so $G$ admits such a topology.
\end{teo}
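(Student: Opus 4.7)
My plan is to equip $G$ with the initial topology $\tau$ for which $\Phi: G\to \mathbb T^{\mathfrak c}$ becomes a topological embedding; since $\Phi$ is a group monomorphism into a Hausdorff topological group, $(G,\tau)$ is automatically a Hausdorff topological group, and it suffices to verify countable compactness and the existence of a non-trivial convergent sequence for the subspace $\Phi[G]\subseteq \mathbb T^{\mathfrak c}$. The convergent sequence comes for free: condition~(3) of fulfilling the frame with $S=1$ gives $\Phi(0,n!\chi_n)\to 0+\mathbb Z$ in $\mathbb T^{\mathfrak c}$; since $\omega\subseteq L_1$, each $(0,n!\chi_n)=n!\cdot(0,\chi_n)$ belongs to $G$, and the elements of the sequence are pairwise distinct (different supports in $\mathbb Q^{(P_1)}$, and $\Phi$ is injective).

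For countable compactness, using the sequential characterization, let $(g_n:n\in\omega)$ be a one-to-one sequence in $G$. Lift each $g_n$ via the surjection $[\,\cdot\,]:\mathbf Q\to\mathbf W$ to obtain $g:\omega\to\mathbf Q$ with $[g(n)]=g_n$, and apply Theorem~\ref{prop_subseq}: after passing to a subsequence $j$, write $g\circ j = h + c + \sum_{i\in F}\tfrac{p_i}{q_i}\, f\circ j_i$ with $[h]$ constant in $G^\omega$ or $h\in\mathcal H$, where $f(n)=n!\chi_n$. Each summand $\Phi\bigl(0,\tfrac{p_i}{q_i}(j_i(n))!\chi_{j_i(n)}\bigr) = p_i\cdot\Phi\bigl(0,\tfrac{1}{q_i}(j_i(n))!\chi_{j_i(n)}\bigr)$ tends to $0$ by condition (3) applied with $S=q_i$ along the subsequence $(j_i(n))_n$, so their sum converges to $0$ in $\mathbb T^{\mathfrak c}$. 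If $[h]$ is constant equal to $h^*$, then $\Phi(g_{j(n)})\to \Phi(h^*+[c])\in\Phi[G]$, furnishing a $p$-limit in $\Phi[G]$ for any free $p$. If instead $h\in\mathcal H$, the frame enumeration provides $\xi$ (in $J_1$ if $h$ is of types 1--10, or in $J_k$ if $h$ is of type 11 of order $k$) with $h_\xi=h$; conditions (1)--(2) of fulfilling the frame produce a $p_\xi$-limit of $(\Phi([h(n)]))$ in $\Phi[G]$ equal to $\Phi(0,\chi_\xi)$ or $\Phi(y_\xi,0)$, and Proposition~\ref{prop_p-limit} combines this with the constant $\Phi([c])$ and with the $0$ coming from the summable tails to yield a $p_\xi$-limit of $(\Phi(g_{j(n)}))$ in $\Phi[G]$. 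This is an accumulation point for $\{g_n:n\in\omega\}$ in $(G,\tau)$.

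The main subtlety is verifying that, whenever the residual part $h$ is of type 11 of order $k$, such a $k$ actually lies in $D$ so that $J_k\neq\emptyset$ in the frame and a $\xi$ with $h_\xi=h$ can be chosen. This follows from the algebraic hypothesis on $G$: a type 11 sequence of order $k$ in $G^\omega$ produces a countable independent family in $G[k]$, so $G[k]$ is infinite and hence of size $\mathfrak c$, which by standard structure theorems forces $\mathbb Z_k^{(\mathfrak c)}\hookrightarrow G$ and thus $k\in D$; once $k\in D$, the requirement that each element of $\mathcal H$ appear $\mathfrak c$ many times in the frame enumeration delivers the needed index $\xi$. All other steps are routine applications of the linearity of $p$-limits (Proposition~\ref{prop_p-limit}) and their coordinatewise behaviour in products (Proposition~\ref{prop_p-limit_product}).
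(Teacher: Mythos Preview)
Your argument is correct and follows the same route as the paper's proof: lift to $\mathbf Q$, apply Theorem~\ref{prop_subseq}, use condition~(3) of fulfillment to kill the $f\circ j_i$ summands, and combine via Proposition~\ref{prop_p-limit}.

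Your final paragraph, however, is superfluous. The hypothesis that $(J,h)$ is a frame already requires $(h_\xi)$ to enumerate \emph{all} of $\mathcal H$, with each type-11 sequence of order $k$ placed at some $\xi\in J_k$; so the mere existence of the frame (which you are assuming) already forces $k\in D$ whenever such a sequence lies in $\mathcal H$. The structure-theoretic justification you sketch belongs to the verification that frames exist, not to the present theorem---and as written it has a small gap: $|G[k]|=\mathfrak c$ alone does not yield $\mathbb Z_k^{(\mathfrak c)}\hookrightarrow G$ (e.g.\ $G[4]$ could be $\mathbb Z_2^{(\mathfrak c)}$); one needs the infinite independent family of order-$k$ elements together with the full condition on the groups $dG[n]$.
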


\begin{proof}
Let $G$ and $(J, h)$ be given as in the statement of the proposition. Let $g: \omega \to \Phi[G]$. We have that $\Phi^{-1}\circ g:\omega \to G$, so take any $\tilde g:\omega\to\mathbf Q$ such that $[\tilde g]=\Phi^{-1}\circ g.$ It follows from Proposition \ref{prop_subseq} that there exist $h:\omega\rightarrow \mathbf Q$ such that $h\in \mathcal{H}$ or $[h]$ is constant and in $G$, $c\in \mathbf Q$ with $[c] \in G$, $F \in [\omega]^{< \omega}$, $p_i, q_i \in \mathbb{Z}$ with $q_i \neq 0$ for every $i \in F$, $(j_{i}:i\in F)$ increasing enumerations of subsets of $\omega$  and $j: \omega \to \omega$ strictly increasing such that

\[\tilde g \circ j = h + c + \sum_{i \in F} \frac{p_i}{q_i}   f\circ j_{i}\]

with $q_i\leq j_i(n)$ for each $n \in \omega$ and $i \in F$, where $f : \omega \to \mathbf Q$ is given by $f(n) = (0, n!   \chi_n)$ for every $n \in \omega$.

In the case where $[h]$ is constant, say constantly $v\in\mathbf W$, we have that $g\circ j=\Phi\circ[\tilde g]\circ j=\Phi\circ[\tilde g\circ j]=\Phi(v)+\Phi([c])+\sum_{i\in F}\Phi\circ([\frac{p_i}{q_i}f\circ j_i])$ converges to $\Phi(v)+\Phi([c])$.

Now, in the case $h\in\mathcal H$:

Since it is fulfillment, 
  if $\xi \in J_1$, then \[\Phi(0, \chi_{\xi}) = p_{\xi}-\lim (\Phi([h_{\xi}(n)]) : n \in \omega)\] for every $\alpha < \mathfrak{c}$ and if $\xi \in \bigcup_{n \in D} J_n$, then \[\Phi(y_{\xi}, 0) = p_{\xi}-\lim (\Phi([h_{\xi}(n)]) : n \in \omega)\] for every $\alpha < \mathfrak{c}$. 

Since the sequence $\left( \Phi \left( 0, \frac{1}{q_i}   n!   \chi_{n} \right) : n \in \omega \right)$ converges to $0$ for each $i \in F$, it follows from Proposition \ref{prop_p-limit} that if $\xi \in J_1$, then
\[\Phi((0, \chi_{\xi}) + [c]) = p_{\xi}-\lim \left( \Phi \left([h_{\xi}(n)] + [c] + \sum_{i \in F} \left[ \frac{p_i}{q_i}   f\circ j_i(n)\right] \right) : n \in \omega \right)\]

and if $\xi \in \bigcup_{n \in D} J_n$, then
\[\Phi((y_{\xi}, 0) + [c]) = p_{\xi}-\lim \left( \Phi \left( [h_{\xi}(n)] + [c] + \sum_{i \in F} \left[ \frac{p_i}{q_i}   f\circ j_i(n))\right] \right) : n \in \omega \right).\]

Since $\Phi\circ \left( [h_{\xi}] + [c] + \sum_{i \in F} \left[ \frac{p_i}{q_i}   f\circ j_i)\right] \right)=\Phi\circ[\tilde g\circ j]=\Phi\circ[\tilde g]\circ j=g\circ j$, we may conclude that $\Phi((0, \chi_{\xi}) + [c])$ is an accumulation point of $g$ if $\xi \in J_1$ and $\Phi((y_{\xi}, 0) + [c])$) is an accumulation point of $g$ if $\xi \in \bigcup_{n \in D} J_n$.
\end{proof}

The following result will be used later to provide countably compact group topologies without non-trivial convergent sequences using frames. 

\begin{cor} \label{cor.extracting.ccgwntcs} Let $G$ be a nicely immersed group and $(J, h)$ be a frame for it.  \label{cor.ful.non.conv.seq} If $\Phi$ and $(p_\xi:\, \xi \in J_1\cup \bigcup_{n\in D}J_n)$ fulfill the frame, then $\Phi$ generates a countably compact group topology on $G\cap \mathbf {W}_{\mathfrak c \setminus \omega}$ without non-trivial convergent sequences.
\end{cor}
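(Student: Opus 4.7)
The plan is to mirror the proof of Theorem~\ref{teo.ful.conv.seq}, adding the step of verifying that accumulation points (for countable compactness) and hypothetical limits (for the non-trivial convergence part) lie in $\Phi[G\cap\mathbf W_{\mathfrak c\setminus\omega}]$. Given a one-to-one sequence $g:\omega\to\Phi[G\cap\mathbf W_{\mathfrak c\setminus\omega}]$, lift it to $\tilde g:\omega\to\mathbf Q$ with $\Phi\circ[\tilde g]=g$. Because $[\cdot]$ is the identity on the $\omega$-coordinates (which lie in $P_1$) and $[\tilde g](n)\in\mathbf W_{\mathfrak c\setminus\omega}$, the $\omega$-part $\tilde g^{1,0}$ vanishes. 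Applying Proposition~\ref{prop_subseq} gives $\tilde g\circ j=h+c+\sum_{i\in F}\frac{p_i}{q_i}f\circ j_i$; projecting to the $\omega$-coordinates (noting each $f\circ j_i(n)$ is supported on $\{j_i(n)\}\subseteq\omega$) yields, for every $n$,
\[h^{1,0}(n)+c^{1,0}+\sum_{i\in F}\frac{p_i}{q_i}j_i(n)!\chi_{j_i(n)}=0,\]
so that the tail sum cancels against the $\omega$-parts of $h$ and $c$. This produces the clean identity $\tilde g\circ j=\tilde h+\tilde c$ with $\tilde h:=h-h^{1,0}$ and $\tilde c:=c-c^{1,0}$ both in $\mathbf Q_{\mathfrak c\setminus\omega}$.

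The technical heart is to show $\tilde h$ remains in $\mathcal H$ (possibly after dropping finitely many indices). For types 1, 2, 3 the defining conditions involve only $h^{1,1}$, which is unchanged; for types 10 and 11 one has $h\in\mathbf Q^0$, so $\tilde h=h$ automatically; for type 4 the displayed identity forces $q(h^{1,0}(n))\le\max_\mu q(c(\mu))$ to be bounded, so any growth $q(h(n))>n$ passes to $\tilde h$. Types 5--9 are in fact \emph{excluded} by the identity: for type 5 it would force $h(n)(M)=-c(M)$ eventually (since $M\in\omega$ is fixed and $j_i(n)\to\infty$), contradicting $|p(h(n)(M))|>n$; for types 6--8 the values $h(n)(M_n)/M_n!$ eventually lie in the finite set $\{-\sum_{i:j_i(n)=M_n}p_i/q_i\}$, contradicting the required 1-1 behavior; for type 9 the containment $\supp h^{1,0}(n)\subseteq\supp c^{1,0}\cup\{j_i(n):i\in F\}$ bounds $|\supp h^{1,0}(n)|$ by a constant, contradicting $|\supp h^{1,0}(n)|>n$. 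Hence $\tilde h\in\mathcal H$ or $[\tilde h]$ is constant.

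If $[\tilde h]$ is constant then $[\tilde g\circ j]=[\tilde h]+[\tilde c]$ is constant, so $g\circ j$ is constant, contradicting injectivity of $g$. Otherwise, pick $\xi$ with $h_\xi=\tilde h$: in $J_1\setminus\omega$ when $\tilde h$ has type 1--10, and in $J_k\subseteq P_0$ when $\tilde h$ has type 11 of order $k$---feasible because $\mathfrak c$-many indices $\xi$ satisfy $h_\xi=\tilde h$ and only countably many can lie in $\omega$. Fulfillment then gives $p_\xi$-$\lim(g\circ j)=\Phi((0,\chi_\xi)+[\tilde c])$ (respectively $\Phi((y_\xi,0)+[\tilde c])$); since $\xi\notin\omega$ and $[\tilde c]\in\mathbf W_{\mathfrak c\setminus\omega}$, its pre-image lies in $G\cap\mathbf W_{\mathfrak c\setminus\omega}$, producing the accumulation point required for countable compactness.

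For the non-trivial convergent sequences part, assume $g\to z$ with $g(n)\ne z$ infinitely often, pass to a 1-1 subsequence still converging to $z$, and run the same procedure. The constant-$[\tilde h]$ case is again ruled out by injectivity. With $\tilde h\in\mathcal H$, select two distinct $\xi_1\ne\xi_2$ with $h_{\xi_i}=\tilde h$ of the same type (possible by the $\mathfrak c$-appearance); both $p_{\xi_i}$-limits of $g\circ j$ must equal the unique limit $z$, forcing $\Phi(\chi_{\xi_1}-\chi_{\xi_2})=0$ (or the analogue with $y_{\xi_i}$), contradicting injectivity of $\Phi$. The main obstacle throughout is verifying that the $\omega$-erased sequence $\tilde h$ remains in $\mathcal H$, which is precisely the case analysis excluding types 5--9 via the displayed identity.
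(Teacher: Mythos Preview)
Your approach differs from the paper's in a way that introduces a real gap. The paper does \emph{not} apply Proposition~\ref{prop_subseq} as a black box; instead it observes that the proof of that proposition (in \cite{bellini&boero&castro&rodrigues&tomita}) never enlarges supports, so starting from $\tilde g$ with support disjoint from $\omega$ yields $h,c$ already supported off $\omega$ and $F=\emptyset$. In particular $[c]\in G_1$ and $h\in\mathcal H$ come for free, with no post-hoc subtraction needed.

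By contrast, you apply Proposition~\ref{prop_subseq} as stated and then set $\tilde h=h-h^{1,0}$, $\tilde c=c-c^{1,0}$. Your case analysis ruling out types 5--9 is fine, but for the surviving types 1--4 you never verify the membership condition $[\tilde h]\in G^\omega$ that is part of the definition of every type (and hence of $\mathcal H$). Concretely, $[\tilde h(n)]=[h(n)]-h^{1,0}(n)$ and, via your displayed identity, $h^{1,0}(n)+c^{1,0}\in\mathbb Z^{(\omega)}\subseteq G$; so $[\tilde h(n)]\in G$ if and only if $c^{1,0}\in G$. But $c^{1,0}$ is the $\omega$-projection of $[c]\in G$, and there is no reason the $\omega$-projection of an arbitrary element of a nicely immersed $G$ lies in $G$ (only $\mathbb Z^{(\omega)}\subseteq G$ is guaranteed, not $\mathbb Q^{(\omega)}$). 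Without $[\tilde h]\in G^\omega$ you cannot conclude $\tilde h\in\mathcal H$, so the step ``pick $\xi$ with $h_\xi=\tilde h$'' is unjustified. The same missing fact $[\tilde c]\in G$ is exactly what you need to place the accumulation point $(0,\chi_\xi)+[\tilde c]$ inside $G\cap\mathbf W_{\mathfrak c\setminus\omega}$, so the gap also affects your countable-compactness conclusion. The cleanest repair is the paper's: appeal to the internal structure of the proof of Proposition~\ref{prop_subseq} rather than its statement.
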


\begin{proof} 
We will denote $G_1=G\cap \mathbf {W}_{\mathfrak c \setminus \omega}$. Notice that $G_1$ has size $\mathfrak c$.
Let $g: \omega \to \Phi[G_1]$. 
Since $\Phi$ is a monomorphism and thus one-to-one, we have that
$\Phi^{-1}\circ g:\omega \to G_1$ is a sequence 
in $G$.

So take some $\tilde g:\omega\to\mathbf Q_{\mathfrak c \setminus \omega}$ such that 
$[\tilde g]=\Phi^{-1}\circ g.$ Following the proof of Proposition \ref{prop_subseq}, which is found in \cite{bellini&boero&castro&rodrigues&tomita}, it is clear that as
the support of $\Phi^{-1}\circ g$ does not intersect $\omega$ and the manipulations to get the types do 
not add new support, that there exist 
$h:\omega\rightarrow \mathbf Q_{\mathfrak c \setminus \omega}$ such that 
$h\in \mathcal{H}$ or $[h]$ is constant and in $G_1$, $c\in \mathbf Q$ with $[c] \in G_1$ and $j:\omega\rightarrow \omega$ strictly increasing such that

\[\tilde g \circ j = h + c \]

as in this case $F$ that appears in the general case is empty.

In the case where $[h]$ is constant, say constantly $v\in\mathbf W_{\mathfrak c \setminus \omega}$, we have that $g\circ j=\Phi\circ[\tilde g]\circ j=\Phi\circ[\tilde g\circ j]=\Phi(v)+\Phi([c])$ converges to $\Phi(v)+\Phi([c])$.

Now, in the case $h\in\mathcal H$:

For each $\xi \in J_1$, then \[\Phi(0, \chi_{\xi}) = p_{\xi}-\lim (\Phi([h_{\xi}(n)]) : n \in \omega)\] for every $\alpha < \mathfrak{c}$ and if $\xi \in \bigcup_{n \in D} J_n$, then \[\Phi(y_{\xi}, 0) = p_{\xi}-\lim (\Phi([h_{\xi}(n)]) : n \in \omega)\] for every $\alpha < \mathfrak{c}$. 

It follows from Proposition \ref{prop_p-limit} that if $\xi \in J_1$, then
\[\Phi((0, \chi_{\xi}) + [c]) = p_{\xi}-\lim \left( \Phi \left([h_{\xi}(n)] + [c] \right) : n \in \omega \right)\]

and if $\xi \in \bigcup_{n \in D} J_n$, then
\[\Phi((y_{\xi}, 0) + [c]) = p_{\xi}-\lim \left( \Phi \left( [h_{\xi}(n)] + [c]  \right) : n \in \omega \right).\]

Now, fix $\xi \in J_1 \cup \bigcup_{n \in D} J_n$ such that $h=h_\xi$.

Then $\Phi\circ \left( [h_{\xi}] + [c]  \right)=\Phi\circ[\tilde g\circ j]=\Phi\circ[\tilde g]\circ j=g\circ j$ implies that $\Phi((0, \chi_{\xi}) + [c])$ is an accumulation point of $g$ if $\xi \in J_1$ and $\Phi((y_{\xi}, 0) + [c])$) is an accumulation point of $g$ if $\xi \in \bigcup_{n \in D} J_n$. 

Hence $G_1$ is countably compact.

Since $h$ appears more than once in the enumeration, it follows that every 1-1 sequence $g$ is not a convergent sequence. Therefore $G_1$ does not have non-trivial convergent sequences.
\end{proof}

\begin{defin}Given a nicely immersed group $G=(G, \mathbf W, L, y)$, $w \in \mathbf W$, a frame for a countably compact group topology $(J, h)$, we let $E(w, G, J, h)$ be a fixed countable set $E\subseteq \mathfrak c$ for which $\omega\subset E$, $w\in\mathbf{ W}_E$, $[h_\xi (n)] \in \mathbf{ W}_E$ for each $n\in \omega$ and  each $\xi \in E \cap ( J_1\cup \bigcup_{k \in D} J_k)$. Moreover, we require that $E\subseteq \omega \cup (\sup(\supp w)+1)$.
\end{defin}

It is easy to show that $E(w, G, J, h)$ is well defined, that is, such an set $E$ exists. Moreover notice that $E(w, G, J, h)$ should really be $E(w, (G, \mathbf W, L, y), J, h)$, but we will use this shorthand whenever it does not generate confusion.

\begin{prop}\label{prop_hom} Let $G=(G, \mathbf W, L, y)$ be a nicely immersed group, $(J, h)$ a frame, $w \in \mathbf W$ be non-zero and $E=E(w, G, J, h)$ and $(a_\mu:\, \mu \in R_1)$ be arbitrary
elements of $\mathbb T$ where $R_1$ is a subset of $L_1 \setminus J_1$ that does not intersect $E$. 

Assume the existence of $\mathfrak c$ incomparable selective ultrafilters and enumerate them faithfully as $(p_\xi:\, \xi \in J_1 \cup \bigcup_{n \in D} J_n)$.

There exists a group homomorphism $\rho : \mathbf W \to \mathbb{T}$ satisfying the following conditions:

\begin{enumerate}[label=(\roman*)]

  \item $\rho(w) \neq 0 + \mathbb{Z}$;

  \item if $\xi \in J_1$, then the $p_\xi$-limit of sequence $(\rho([h_{\xi}(n)]) : n \in \omega)$ is $\rho(0, \chi_{\xi})$;

  \item if $\xi \in \bigcup_{n \in D} J_n$, then the $p_\xi$-limit of the sequence $(\rho([h_{\xi}(n)]) : n \in \omega)$ is $\rho(y_{\xi}, 0)$;

  \item for each $S$ positive integer, the sequence $\left( \rho \left( 0, \frac{1}{S}   n!   \chi_{n} \right) : n \in \omega \right)$ converges to $0 + \mathbb{Z}$.

\item if $\xi \in R_1$ then $\rho(0, \chi_\xi)=a_\xi$;
\end{enumerate}
\end{prop}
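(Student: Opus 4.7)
The plan is to apply Proposition \ref{countable.hom} once to the countable set $E$ and then extend $\rho$ to all of $\mathbf W$ by a transfinite recursion of length $\mathfrak c$, using only divisibility of $\mathbb T$. The crucial structural observation is that for every $\xi\in J_1\cup\bigcup_{n\in D}J_n$ the frame condition $\supp h_\xi\subseteq(\xi\times\omega)\cup\xi$ forces $h_\xi$ to use only ordinals strictly below $\xi$. Hence, if $\mathfrak c\setminus E$ is traversed in strictly increasing ordinal order, then by the time the recursion reaches $\xi$ the sequence $([h_\xi(n)])_n$ already lies in the domain built so far, its $p_\xi$-limit in $\mathbb T$ (existing by compactness) is a predetermined element $L$, and all that remains is to set $\rho$ at the single new free generator associated to $\xi$ equal to $L$.

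For the base step I apply Proposition \ref{countable.hom} with the given $E$, $e=w$, $\mathcal G=\{h_\xi:\xi\in E\cap(J_1\cup\bigcup_{n\in D}J_n)\}$, the ultrafilters $(p_\xi)$ restricted to this (countable) index set, and $c_{h_\xi}\in\mathbf Q_E$ chosen as a preimage of $(0,\chi_\xi)$ when $\xi\in J_1\cap E$ (non-torsion, as required for types 1--10) and of $(y_\xi,0)$ when $\xi\in J_n\cap E$ with $n\in D$ (of order $n=\ordem([h_\xi(m)])$, matching the hypothesis for type 11). This yields $\rho_0:\mathbf W_E\to\mathbb T$ satisfying (i), (iv), and (ii)--(iii) for every $\xi\in E$. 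Next I enumerate $\mathfrak c\setminus E$ in strictly increasing ordinal order as $(\zeta_\alpha:\alpha<\mathfrak c)$ and build $\rho_\alpha:\mathbf W_{E_\alpha}\to\mathbb T$ extending all previous $\rho_\beta$ (unions at limit stages), where $E_\alpha=E\cup\{\zeta_\beta:\beta<\alpha\}$. Since every ordinal below $\zeta_\alpha$ either belongs to $E$ or equals some $\zeta_\beta$ with $\beta<\alpha$, the frame condition gives $[h_{\zeta_\alpha}(n)]\in\mathbf W_{E_\alpha}$ for every $n$ whenever $\zeta_\alpha\in J_1\cup\bigcup_{n\in D}J_n$. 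At the successor stage $\alpha+1$: if $\zeta_\alpha\in R_1$, set $\rho_{\alpha+1}(0,\chi_{\zeta_\alpha})=a_{\zeta_\alpha}$; if $\zeta_\alpha\in J_1$, let $L$ be the $p_{\zeta_\alpha}$-limit of $(\rho_\alpha([h_{\zeta_\alpha}(n)]))_n$ in $\mathbb T$ and set $\rho_{\alpha+1}(0,\chi_{\zeta_\alpha})=L$; if $\zeta_\alpha\in J_n$ for some $n\in D$, set $\rho_{\alpha+1}(y_{\zeta_\alpha},0)=L$, noting that $nL=0$ (each term of the sequence is annihilated by $n$), which is compatible with $\ordem(y_{\zeta_\alpha})=n$; otherwise extend $\rho_\alpha$ arbitrarily. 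In each case $\mathbf W_{E_{\alpha+1}}$ decomposes as $\mathbf W_{E_\alpha}\oplus C_{\zeta_\alpha}$ where $C_{\zeta_\alpha}$ is isomorphic to $\mathbb Q$ (if $\zeta_\alpha\in P_1$) or to a finite direct sum of copies of $\mathbb Q/\mathbb Z$ (if $\zeta_\alpha\in P_0$), both of which are divisible, so the prescribed assignment on the new generator extends to a homomorphism on $C_{\zeta_\alpha}$ by injectivity of $\mathbb T$.

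The main insight, rather than a serious obstacle, is the ordering trick: processing $\mathfrak c\setminus E$ in increasing ordinal order converts each $p_{\zeta_\alpha}$-limit clause from a global constraint into a predetermined compatible value, reducing every successor step to an injective-extension problem. No new arc-homomorphism argument is needed beyond the single application of Proposition \ref{countable.hom}. Setting $\rho=\bigcup_{\alpha<\mathfrak c}\rho_\alpha$, clauses (i) and (iv) are inherited from $\rho_0$ (clause (iv) concerns only elements of $\mathbf W_E$ since $\omega\subseteq E$), clause (v) is ensured by the $R_1$ case, and clauses (ii)--(iii) for $\xi\notin E$ are ensured by the $J_1$ and $J_n$ cases, while for $\xi\in E$ they are inherited from $\rho_0$.
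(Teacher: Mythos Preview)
Your proposal is correct and follows essentially the same approach as the paper: apply Proposition~\ref{countable.hom} once on the countable set $E$, then extend by transfinite recursion over $\mathfrak c\setminus E$ enumerated in increasing ordinal order, using the frame condition $\supp h_\xi\subseteq(\xi\times\omega)\cup\xi$ to guarantee that each $p_\xi$-limit is already determined before stage $\xi$, and using divisibility/injectivity of $\mathbb T$ to perform the single-generator extensions (including the $R_1$ and order-$n$ cases). Your write-up is in fact slightly more explicit than the paper's about the direct-sum decomposition $\mathbf W_{E_{\alpha+1}}=\mathbf W_{E_\alpha}\oplus C_{\zeta_\alpha}$ and about why $nL=0$ in the $J_n$ case, but the argument is the same.
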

\begin{proof}

Let $\mathcal G=\{h_\xi:\, \xi \in E \cap (J_1 \cup \bigcup_{k\in D} J_k)\}$ and $c_{h_\xi}\in\mathbf Q$ such that $[c_{h_\xi}] = (y_\xi,0)$ if $\xi \in E \cap \bigcup_{k \in D} J_k$ and $[c_{h_\xi}]=(0, \chi_\xi)$ if $\xi \in E \cap J_1$. Apply Proposition \ref{countable.hom} to obtain a homomorphism $\psi: \mathbf{ W}_E \longrightarrow \mathbb T$. Then conditions (i)-(iv) are satisfied for every $\xi \in E$. In particular, any homomorphism that extends $\psi$ will satisfy conditions (i) and (iv). 

Let $(\xi_\gamma:\gamma<\mathfrak c)$ be an increasing enumeration of $\mathfrak c\backslash E$ and let $E_\gamma=E\cup\{\xi_\alpha:\alpha<\gamma\}$ for each $\gamma<\mathfrak c$. We will define recursively an increasing sequence of homomorphisms $\psi_\gamma: \mathbf{ W}_{E_\gamma}\to\mathbb T$ for each $\gamma<\mathfrak c$ satisfying (ii) and (iii) in their domains.

So we let $\psi_0=\psi$ and $\psi_\gamma=\bigcup_{\alpha<\gamma}\psi_\alpha$ if $\gamma$ is a limit ordinal. Suppose now $\gamma=\alpha+1$. If $\xi_{\alpha}\notin R_1 \cup J_1 \cup \bigcup_{k\in D} J_k$, we extend $\psi_\alpha$ to $\psi_{\alpha+1}$ using the divisibility of $\mathbb T$.  If $\xi_\alpha \in R_1$ we define $\psi_{\alpha+1}(0,\chi_{\xi_\alpha})=a_{\xi_\alpha}$ by the divisibility of $\mathbb T$ and the infinite order of $\chi_{\xi_\alpha}$. If $\xi_\alpha\in J_1 \cup \bigcup_{k\in D} J_k$, since $[h_{\xi_\alpha}(n)] \in\mathbf {W}_{E_{\alpha}}$ for each $n\in\omega$, the $p_{\xi_\alpha}$-limit of  $(\psi_{\alpha}([h_{\xi_\alpha}(n)]) : n \in \omega)$ is some $x\in\mathbb T$. Notice that if $\xi_\alpha\in J_k$ for some $k\in D$, then $o(x)$ divides $k$. Thus using the divisibility we define $\psi_{\alpha+1}$ on $\mathbf {W}_{E_{\alpha+1}}$ satisfying: $\psi_{\alpha+1}(0,\chi_{\xi_\alpha})=x$ if $\xi_\alpha\in J_1$, and $\psi_{\alpha+1}(y_{\xi_\alpha},0)=x$ if $\xi_\alpha\in \bigcup_{k\in D} J_k$.

Define $\rho=\bigcup_{\gamma<\mathfrak c}\psi_\gamma$ and we are done.
\end{proof}

Finally, we prove the first variation of the main result in this paper. Recall that $\mathcal U$ has been defined as the basis of $\mathbb T^c$ consisting of products of nonempty arcs. If $U \in \mathcal U$, $\supp U=\{\alpha<\mathfrak c: U_\alpha\neq \mathbb T\}$, where $U=\prod_{\alpha<\mathfrak c} U_\alpha$.

\begin{teo} \label{ccgwithcs}Given a frame $(J, h)$ for a nicely immersed group $G$, let $\mathcal P=(p_\xi:\, \xi \in J_1 \cup \bigcup _{n\in D}J_n)$  be a faithful enumeration of pairwise incomparable selective ultrafilters. Then there is $\Phi$  and $(R_U:\, U \in \mathcal U)$ a family of pairwise disjoint subsets of $L_1 \setminus J_1 $ of cardinality $\mathfrak c$ such that $\Phi$ and $\mathcal P$ fulfill the frame and $\Phi(0,\chi_\xi) \in U$ for each $U\in \mathcal U$ and
$\xi \in R_U$.
\end{teo}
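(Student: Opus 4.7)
The plan is to build $\Phi : \mathbf{W} \to \mathbb{T}^{\mathfrak c}$ coordinate-by-coordinate, setting $\Phi(w) = (\rho_\alpha(w))_{\alpha<\mathfrak c}$ where each $\rho_\alpha : \mathbf{W} \to \mathbb{T}$ is produced by a separate application of Proposition~\ref{prop_hom}. Fulfillment of the frame will follow coordinatewise from Proposition~\ref{prop_p-limit_product}; injectivity of $\Phi$ will be reduced to arranging that $\rho_\alpha(w^\alpha) \neq 0$ where $(w^\alpha : \alpha < \mathfrak c)$ is a fixed enumeration of $\mathbf{W}\setminus\{0\}$ (possible since $|\mathbf{W}|=\mathfrak c$); and the arc-constraint $\Phi(0,\chi_\xi)\in U$ for $\xi\in R_U$ will be coded into the prescribed values $(a_\xi)$ allowed by the last clause of Proposition~\ref{prop_hom}.

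I would first construct the family $(R_U : U \in \mathcal U)$. Enumerate $\mathcal U = (U^\beta : \beta < \mathfrak c)$. Because $\supp U^\beta$ is finite and each $\sup(\supp w^\alpha)+1$ is an ordinal below $\mathfrak c$, the ordinal
\[
\eta_\beta = \sup\bigl(\{\omega\}\cup\{\sup(\supp w^\alpha)+1 : \alpha\in \supp U^\beta\}\bigr)
\]
lies in $\mathfrak c$. Since $|L_1\setminus J_1|=\mathfrak c = \mathfrak c \cdot \mathfrak c$, fix any partition $L_1\setminus J_1 = \bigsqcup_{\beta<\mathfrak c} S_\beta$ with $|S_\beta|=\mathfrak c$ for every $\beta$, and define $R_{U^\beta} = S_\beta \setminus \eta_\beta$. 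Since $|\eta_\beta|<\mathfrak c$, $R_{U^\beta}$ still has cardinality $\mathfrak c$; the family is pairwise disjoint by construction; and for every $\alpha\in\supp U^\beta$, the definition of $E(w^\alpha,G,J,h)$ gives
\[
E(w^\alpha,G,J,h)\subseteq \omega\cup(\sup(\supp w^\alpha)+1) \subseteq \eta_\beta,
\]
so $R_{U^\beta}\cap E(w^\alpha,G,J,h)=\emptyset$ whenever $\alpha\in\supp U^\beta$.

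Now, for each $\alpha<\mathfrak c$, let $E_\alpha=E(w^\alpha,G,J,h)$, $T_\alpha=\{\beta<\mathfrak c : \alpha\in\supp U^\beta\}$, and $R_1^{(\alpha)} = \bigcup_{\beta\in T_\alpha} R_{U^\beta}$. By the previous paragraph $R_1^{(\alpha)} \subseteq L_1\setminus J_1$ and $R_1^{(\alpha)}\cap E_\alpha = \emptyset$, which are exactly the hypotheses on $R_1$ in Proposition~\ref{prop_hom}. For each $\xi \in R_1^{(\alpha)}$ there is a unique $\beta(\xi)\in T_\alpha$ with $\xi\in R_{U^{\beta(\xi)}}$; pick any $a_\xi \in U^{\beta(\xi)}_\alpha$ (nonempty). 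Apply Proposition~\ref{prop_hom} with $w=w^\alpha$, $E=E_\alpha$, $R_1=R_1^{(\alpha)}$ and prescribed values $(a_\xi : \xi\in R_1^{(\alpha)})$, using the given family $\mathcal P$, to obtain a homomorphism $\rho_\alpha : \mathbf{W} \to \mathbb{T}$ with $\rho_\alpha(w^\alpha)\neq 0$, the required $p_\xi$-limit conditions, the convergence condition on $(\rho_\alpha(0,\tfrac{1}{S}n!\chi_n))_n$, and $\rho_\alpha(0,\chi_\xi)=a_\xi$ for all $\xi\in R_1^{(\alpha)}$.

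Finally I would verify that $\Phi(w)=(\rho_\alpha(w))_{\alpha<\mathfrak c}$ does the job: it is a homomorphism because each $\rho_\alpha$ is; it is injective because for $w=w^\alpha\neq 0$ its $\alpha$-th coordinate is nonzero; fulfillment of the frame is immediate from Proposition~\ref{prop_p-limit_product} applied to each of the required sequences. For the arc condition, fix $\beta<\mathfrak c$ and $\xi\in R_{U^\beta}$: if $\alpha\in\supp U^\beta$ then $\beta\in T_\alpha$, so $\xi\in R_1^{(\alpha)}$ and $\rho_\alpha(0,\chi_\xi)=a_\xi\in U^\beta_\alpha$ by construction; if $\alpha\notin\supp U^\beta$ then $U^\beta_\alpha=\mathbb{T}$ and the constraint is vacuous. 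Hence $\Phi(0,\chi_\xi)\in U^\beta$. The main obstacle is the compatibility of the simultaneous arc prescriptions coming from all the $U^\beta$'s that share a fixed coordinate $\alpha$ with the countable-subgroup homomorphism already committed to by Proposition~\ref{countable.hom} inside Proposition~\ref{prop_hom}; this is precisely what the bound $\eta_\beta$ is designed to resolve, ensuring no $\xi\in R_{U^\beta}$ ever falls in any $E(w^\alpha,G,J,h)$ with $\alpha\in \supp U^\beta$ on which Proposition~\ref{prop_hom} would otherwise have a value predetermined.
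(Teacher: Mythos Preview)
Your proof is correct and follows essentially the same approach as the paper: build $\Phi$ coordinatewise by applying Proposition~\ref{prop_hom} once for each $\alpha<\mathfrak c$, using an enumeration of $\mathbf W\setminus\{0\}$ to secure injectivity and the clause~(v) of Proposition~\ref{prop_hom} to force the arc constraints. The only cosmetic difference is the bookkeeping that guarantees $R_1^{(\alpha)}\cap E_\alpha=\emptyset$: the paper arranges this by choosing the enumeration $(w_\alpha)$ so that $\max\supp w_\alpha<\max\{\omega,\alpha\}$ and then taking each $R_U$ above $\max\supp U$, whereas you keep the enumeration arbitrary and instead trim each $S_\beta$ by the explicit bound $\eta_\beta$; both devices achieve the same separation.
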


\begin{proof} Enumerate $\mathbf W \setminus \{0\}$ as $\{w_\alpha:\, \alpha <\mathfrak c\}$ so that $\max \supp w_\alpha < \max\{ \omega, \alpha \}$.
Then $E_\alpha=E(w_\alpha, G, J, h) \subset \max \{\omega , \alpha\}$.
Set $\{R_U:\, U \in \mathcal U\}$ a family of disjoint subsets $L_1 \setminus (J_1\cup \omega)$ of cardinality $\mathfrak c$ such that $\max \supp U <\xi$, for each $\xi \in R_U$.

For each $U \in \mathcal U$ and each $\xi \in R_U$, fix $z_\xi \in \mathbb T^{\supp U}$ such
that $z_\xi (\mu)\in U_\mu$ for each $\mu \in \supp U$, where $U_\mu$ is the $\mu$-th coordinate of $U$.

Now we aim to apply Proposition \ref{prop_hom} for each $\alpha$ as follows:

For each $\alpha$, let $R_{1,\alpha}=\bigcup \{ R_U:\, \alpha \in \supp U\}$ and set $a_\xi=z_\xi(\alpha)$ for each $\xi \in R_{1,\alpha}$.
Note that if $\xi \in R_U$ and $\alpha \in \supp U$ then $\xi >\alpha$ and therefore 
$R_{1,\alpha}\cap E_\alpha \subseteq R_{1,\alpha} \cap \max\{\omega,\alpha\} =\emptyset$. 

Thus, we can apply Proposition \ref{prop_hom} to obtain a homomorphism $\rho_\alpha$ that satisfies the $i)$-$iv)$
for every $\alpha < \mathfrak c$ and $v)$ for $R_{1,\alpha}$.

\textbf{Claim:} for each $U\in \mathcal U$, $\xi \in R_U$, and $\alpha \in \supp U$, we have
$\rho_\alpha(0,\chi_\xi) \in U_\alpha$.

Indeed, given $U$, $\xi$ and $\alpha$, we have
$\xi \in R_{1,\alpha}$ and $\rho_\alpha(0,\chi_\xi)=z_\xi(\alpha)\in U_\alpha$.

It follows from Proposition \ref{prop_hom} (i) and that every non-zero element of $\mathbf W$ is 
enumerated by some $w_\alpha$ that \[\begin{array}{cccc}
                                                        \Phi : & \mathbf W & \to & \mathbb{T}^{\mathfrak{c}} \\
                                                               & w & \mapsto & \Phi(w)
                                                      \end{array}
\] given by $\Phi(w)(\alpha) = \rho_{\alpha}(w)$ for every $\alpha < \mathfrak{c}$ is a group monomorphism. Thus, $\Phi[G]$ is isomorphic to $G$ and since $\mathbb{T}^{\mathfrak{c}}$ is a Hausdorff topological group, the subspace topology induced by $\mathbb{T}^{\mathfrak{c}}$ turns $\Phi[G]$ into a Hausdorff topological group and, by the claim, $\Phi(0,\chi_\xi) \in U$ for each $U\in \mathcal U$ and $\xi \in R_U$.

It follows from Proposition \ref{prop_hom} (iv) that if $S$ is a positive integer, then the sequence $\left( \Phi \left( 0, \frac{1}{S}   n!   \chi_{n} \right) : n \in \omega \right)$ converges to $0$.

 According to Proposition \ref{prop_hom} (ii) and (iii), 
  if $\xi \in J_1$, then \[\rho_{\alpha}(0, \chi_{\xi}) = p_{\xi}-\lim (\rho_{\alpha}([h_{\xi}(n)]) : n \in \omega)\] for every $\alpha < \mathfrak{c}$ and if $\xi \in \bigcup_{n \in D} J_n$, then \[\rho_{\alpha}(y_{\xi}, 0) = p_{\xi}-\lim (\rho_{\alpha}([h_{\xi}(n)]) : n \in \omega)\] for every $\alpha < \mathfrak{c}$. It follows from Proposition \ref{prop_p-limit_product} that $\xi \in J_1$, then \[\Phi(0, \chi_{\xi}) = p_{\xi}-\lim (\Phi([h_{\xi}(n)]) : n \in \omega)\] and if $\xi \in \bigcup_{n \in D} J_n$, then \[\Phi(y_{\xi}, 0) = p_{\xi}-\lim (\Phi([h_{\xi}(n)]) : n \in \omega).\]

Therefore, $\Phi$ and $\mathcal P$ is a fulfillment of the frame.
\end{proof}

\begin{lem}\label{RemoveOmega}Let $(G, \mathbf W, L, y)$ be a nicely immersed group in $\mathbb W$. Then there exists a nicely immersed group $(G^*, \mathbf W^*, L^*, y^*)$ such that $G$ is isomorphic to $G^*\cap \mathbf W^*_{\mathfrak c\setminus \omega}$.
\end{lem}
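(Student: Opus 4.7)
The plan is to leave the $\vec P_0$-component of $\mathbf W$ untouched --- it already avoids the indices in $\omega$ since $\omega\subseteq P_1$ and $P_0\cap P_1=\emptyset$ --- and ``shift'' the $P_1$-component away from $\omega$ using a bijection $\sigma:P_1\to P_1\setminus\omega$, which exists because $|P_1|=|P_1\setminus\omega|=\mathfrak c$. The shifted copy of $G$ will then live inside $\mathbf W_{\mathfrak c\setminus\omega}$, and we enlarge it by the subgroup generated by $\{(0,\chi_n):n\in\omega\}$ to restore the axiom $\omega\subseteq L_1^*$ demanded by the definition of a nice immersion.

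Precisely, define an injective homomorphism $\Psi:\mathbf W\to\mathbf W$ by $\Psi(w^0+w^1)=w^0+\tilde w^1$, where $\tilde w^1(\sigma(\mu))=w^1(\mu)$ for $\mu\in P_1$ and $\tilde w^1(\nu)=0$ for $\nu\in\omega$; note that $\Psi[\mathbf W]\subseteq\mathbf W_{\mathfrak c\setminus\omega}$. Take $\mathbf W^*=\mathbf W$, set $L_n^*=L_n$ and $y_\xi^*=y_\xi$ for every $n\in D$ and $\xi\in L_n$, let $L_1^*=\omega\cup\sigma[L_1]$, and define $G^*=\Psi[G]+H$, where $H$ is the subgroup of $\mathbf W$ generated by $\{(0,\chi_n):n\in\omega\}$. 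Since elements of $\Psi[G]$ have $P_1$-support contained in $P_1\setminus\omega$ while elements of $H$ have $P_1$-support in $\omega$, the sum is direct, so $G^*\cap\mathbf W^*_{\mathfrak c\setminus\omega}=\Psi[G]\cong G$ via $\Psi$.

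To see that $(G^*,\mathbf W^*,L^*,y^*)$ is a nice immersion, note that the directness of the decomposition together with the torsion-freeness of $H$ yield $T(G^*)=\Psi[T(G)]$, hence $G^*[n]=\Psi[G[n]]$ and $dG^*[n]=\Psi[dG[n]]$ for every $d\mid n$; this delivers $|G^*|=r(G^*)=\mathfrak c$, the dichotomy for each $dG^*[n]$, and that the set $D$ computed from $G^*$ coincides with that of $G$. The required elements $(0,\chi_\xi)$ for $\xi\in L_1^*$ lie in $G^*$ (for $\xi\in\omega$ directly, and for $\xi=\sigma(\xi')$ with $\xi'\in L_1$ as $\Psi(0,\chi_{\xi'})$), and $(y_\xi^*,0)=\Psi(y_\xi,0)\in\Psi[G]\subseteq G^*$ for every $\xi\in\bigcup_{n\in D}L_n$ because $\supp y_\xi\subseteq\{\xi\}\times\omega$ is fixed by $\Psi$; the order and support conditions on $y_\xi^*$ transfer verbatim from those of $y_\xi$. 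There is no real obstacle: the only subtlety is that one cannot take $G^*=\Psi[G]$ alone, since nice immersion forces $\omega\subseteq L_1^*$ and hence all $(0,\chi_n)$ with $n\in\omega$ into $G^*$, and the addition of $H$ must be made so as not to destroy the decomposition recovering $G$ as $G^*\cap\mathbf W^*_{\mathfrak c\setminus\omega}$.
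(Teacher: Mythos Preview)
Your proof is correct and follows essentially the same strategy as the paper: reindex so that the image of $G$ lives in $\mathbf W_{\mathfrak c\setminus\omega}$, then adjoin the missing generators $(0,\chi_n)$, $n\in\omega$, as a direct summand. Your version is slightly leaner in that you leave $P_0$ fixed (since $P_0\cap\omega=\emptyset$ anyway) and hence may take $\mathbf W^*=\mathbf W$, and you adjoin only $\mathbb Z^{(\omega)}$ rather than the paper's full $\mathbf W_\omega=\mathbb Q^{(\omega)}$; both choices work, and the verification you give is accurate.
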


\begin{proof}
Let us assume $(G, \mathbf W, L, y)$ is a nicely immersed subgroup of $\mathbb W$.

Let $I:\mathfrak c\rightarrow (\mathfrak c\setminus \omega)$ be a function satisfying:

\begin{enumerate}
\item $I$ is injective and onto $(\mathfrak c\setminus \omega)$, and
\item $I[P_0]=P_0$, $I[P_1]=P_1\setminus \omega$.
\end{enumerate}
It is easy to see that such an $I$ exists.

Let $u:\mathbb W\rightarrow \mathbb W$ be the $\mathbb Q$-linear mapping defined by $u(\chi_\xi)=\chi_{I(\xi)}$, if $\xi \in P_1$, and $u(\chi_{(\xi, n)})=\chi_{(I(\xi), n)}$, if $\xi \in P_0$, $n \in \omega$.

Let $G'=u[G]$, $\mathbf W'=u[\mathbf W]$.

Let $G^*=G'\oplus \mathbf W_{\omega}$, $\mathbf W^*=\mathbf W'\oplus \mathbf W_{\omega}$, $L_1^*=I[L_1]\cup \omega$, $L_{n}^*=I[{L_n}]$ for $n \in D$ and $y_{I(\xi)}^*=u(y_\xi)$ for $\xi \in \bigcup_{n \in D}L_n$. Then $(G^*, \mathbf W^*, L^*, y^*)$ is a nicely immersed group, and $G^*\cap \mathbf W^*_{\mathfrak c\setminus \omega}=G'\approx G$.

\end{proof}

\begin{cor} \label{equiv} Assume the existence of $\mathfrak c$ selective ultrafilters.
Let $G$ be a non-torsion Abelian group of size continuum. Then the following are equivalent:

\begin{enumerate}

\item the free rank of $G$ is equal to $\mathfrak{c}$ and, for all $d, n \in \mathbb{N}$ with $d \mid n$, the group $dG[n]$ is either finite or has cardinality $\mathfrak{c}$;

    \item $G$ admits a countably compact Hausdorff group topology; 
    \item $G$ admits a countably compact Hausdorff group topology with non-trivial convergent sequences; and
    
    \item $G$ admits a countably compact group topology without non-trivial convergent sequences.
\end{enumerate}\qed
\end{cor}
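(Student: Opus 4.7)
The plan is to close the cycle of implications. The direction (2) $\Rightarrow$ (1) is the ZFC result attributed to Dikranjan and Tkachenko in the historical discussion, so I will simply cite it. The implications (3) $\Rightarrow$ (2) and (4) $\Rightarrow$ (2) are trivial, since any countably compact Hausdorff group topology is a countably compact Hausdorff group topology. Thus the nontrivial content lies in proving (1) $\Rightarrow$ (3) and (1) $\Rightarrow$ (4), which will be obtained by essentially the same construction with one auxiliary step for (4).

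For (1) $\Rightarrow$ (3), I start from a non-torsion Abelian group $G$ of size $\mathfrak c$ satisfying the algebraic condition in (1). I apply Proposition \ref{megazord} to obtain a nice subgroup $\mathbf W$ of $\mathbb W$ and a monomorphism $\varphi:G\to\mathbf W$ whose image is nicely immersed; replacing $G$ with $\varphi[G]$, I may assume $G$ itself is nicely immersed as $(G,\mathbf W,L,y)$. Since nicely immersed groups admit frames, fix any frame $(J,h)$. Using the hypothesis of $\mathfrak c$ pairwise incomparable selective ultrafilters, I enumerate them faithfully as $\mathcal P=(p_\xi:\xi\in J_1\cup\bigcup_{n\in D}J_n)$ and apply Theorem \ref{ccgwithcs} to obtain $\Phi$ such that $\Phi$ and $\mathcal P$ fulfill the frame. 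Theorem \ref{teo.ful.conv.seq} then yields a countably compact Hausdorff group topology on $\Phi[G]\cong G$, and condition (iv) of fulfillment (that $\Phi(0,\frac{1}{S}n!\chi_n)\to 0$) produces a non-trivial convergent sequence in the topology, giving (3).

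For (1) $\Rightarrow$ (4), I again reduce to the nicely immersed situation, but now I apply Lemma \ref{RemoveOmega} to pass to a nicely immersed $(G^*,\mathbf W^*,L^*,y^*)$ with $G\cong G^*\cap\mathbf W^*_{\mathfrak c\setminus\omega}$. Choosing any frame $(J,h)$ for $G^*$ and again using $\mathfrak c$ incomparable selective ultrafilters, Theorem \ref{ccgwithcs} yields $\Phi^*$ fulfilling the frame. Corollary \ref{cor.extracting.ccgwntcs} then states that $\Phi^*$ generates a countably compact group topology on $G^*\cap\mathbf W^*_{\mathfrak c\setminus\omega}$ without non-trivial convergent sequences, and transporting this topology via the isomorphism with $G$ gives (4).

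No step is really an obstacle since all the heavy machinery (the construction of the arc homomorphisms, the handling of the eleven types of sequences, and the verification that fulfillment implies countable compactness with or without convergent sequences) has already been assembled. The proof is essentially a bookkeeping exercise: the only subtlety is that (3) and (4) require two \emph{different} immersions of $G$ (the direct one for (3), and the one given by Lemma \ref{RemoveOmega} that shifts the critical coordinates away from $\omega$ for (4)), because the convergent sequence in (3) comes precisely from the coordinates in $\omega$ used in clause (iv) of the fulfillment, which are then deliberately excluded in (4) by restricting to $\mathbf W^*_{\mathfrak c\setminus\omega}$.
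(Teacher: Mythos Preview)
Your proof is correct and follows essentially the same approach as the paper: citing Dikranjan--Tkachenko for (2)$\Rightarrow$(1), noting (3)$\Rightarrow$(2) and (4)$\Rightarrow$(2) are trivial, deriving (1)$\Rightarrow$(3) from Proposition~\ref{megazord}, Theorem~\ref{ccgwithcs} and Theorem~\ref{teo.ful.conv.seq}, and deriving (1)$\Rightarrow$(4) from Lemma~\ref{RemoveOmega}, Theorem~\ref{ccgwithcs} and Corollary~\ref{cor.extracting.ccgwntcs}. The only slip is a labeling one: the convergent-sequence clause in the definition of fulfillment is condition~(3), not~(iv).
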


\begin{proof} It was proved in \cite{dikranjan&tkachenko} that $(2)$
implies $(1)$ in $ZFC$. That $(3)$ implies $(2)$ is trivial. That $(1)$ implies $(3)$ is Theorem \ref{ccgwithcs}.

That condition $(4)$ implies condition $(2)$, is obvious.

 $(1)$ implies $(4)$ follows from  Corollary \ref{cor.extracting.ccgwntcs} and Lemma \ref{RemoveOmega}.

\end{proof}

\section{More examples} \label{moreexamples}

\subsection{ Algebraic countably compact groups}

The first examples of countably compact groups of cardinality greater than $\mathfrak c$ used forcing  \cite{koszmider&tomita&watson}, which later was adapted to obtain a countably compact group whose cardinality has countable cofinality \cite{tomita4}, \cite{tomita5}, \cite{castro-pereira&tomita}. Dikranjan and Shakhmatov \cite{dikranjan&shakhmatov2} obtained a consistent classification of all Abelian groups of cardinality at most $2^{\mathfrak c}$ that admit a countably compact group topology. The forcing examples obtained so far satisfy $CH$ and only work for groups of cardinalilty at most $2^\mathfrak c$. 

The use of selective ultrafilters for the construction of countably compact groups first appeared in \cite{tomita&watson} and they were afterwards used to construct other examples in \cite{garcia-ferreira&tomita&watson}, \cite{castro-pereira&tomita2010}, \cite{madariaga-garcia&tomita}, \cite{tomita2015}, \cite{tomita3}, \cite{boero&garcia-ferreira&tomita}.

Recall one of the equivalent definitions of an algebraically countably compact group \cite{fuchs1970infinite}.

\begin{defin} An Abelian group $G$ is \textit{algebraically countably compact} if there exists an Abelian group $H$ such that the  group $G\oplus H$ admits a countably compact group topology.
\end{defin}

As a consequence of their classification, Dikranjan and Tkachenko showed that (under Martin's Axiom) every Abelian group of cardinality $\mathfrak c$ is algebraically countably compact. 

 From the classification in \cite{dikranjan&shakhmatov2} it also follows that every Abelian group of cardinality at most $2^{\mathfrak c}$ is algebraically countably compact. However, this forcing model is under $CH$.

The existence of $2^\mathfrak c$ selective ultrafilters follows from $CH$ or Martin's Axiom.
As a corollary to our construction, we will  show that assuming the existence of $2^\mathfrak c$ incomparable selective ultrafilters, every Abelian group $G$
of cardinality at most $2^\mathfrak c$ is algebraically countably compact. For this, it suffices to construct the following example.

\begin{ex} \label{algebraically.cc} Assume the existence of $2^{\mathfrak c}$ selective ultrafilters and let $G$ be an arbitrary Abelian group of cardinality at most $2^{\mathfrak c}$. 
Then $G \oplus (\mathbb Q / \mathbb Z)^{(2^\mathfrak c)}\oplus \mathbb Q^{(2^\mathfrak c)}$ admits a countably compact group topology.
\end{ex}

\begin{proof} Let $\mathcal P$ be a family of incomparable selective ultrafilters of cardinality $2^\mathfrak c$ and let $I_0$, $I_1$, $I_2$ and $I_3$ be a partition of $2^\mathfrak c$ into sets of cardinality $2^\mathfrak c$ with $\omega \subseteq I_3$. Define $P_0=I_0 \cup I_2$ and $P_1=I_1 \cup I_3$.
 Set  ${\mathbf W}=(\mathbb Q/\mathbb Z)^{(P_0)}\oplus \mathbb Q^{(P_1)}$.

 Then $G$ can be embedded in $(\mathbb Q/\mathbb Z)^{(I_0)}\oplus \mathbb Q^{(I_1)}$, so wlog, $G$ is a subgroup of the latter.
Let $G_*=G \oplus (\mathbb Q/\mathbb Z)^{(I_2)}\oplus \mathbb Q^{(I_3)}$, which is a subgroup of $\mathbf W$.

 For $G_*$,  its correspondent $D$ is $ \omega\setminus 2$ (see definition \ref{niceimmerson}). Fix  $L_1 \in [I_3]^{2^\mathfrak c}$ with $\omega \subseteq L_1$ and $(L_n:\, n \geq 2 )$ a pairwise disjoint family of subsets of $I_2$ of cardinality $2^\mathfrak c$.
 Then there is trivially a family $(y_\xi: \xi \in \bigcup_{n \in D} L_n)$ such that:

\begin{enumerate}[label=\alph*)]
\item $\{(0, \chi_{\xi}) \in \mathbf{ W} : \xi \in L_1\} \subset G_*$,
\item $\{(y_{\xi}, 0) \in \mathbf{W} : \xi \in \cup_{n \in D} L_n\} \subset G_*$,
\item $\ordem(y_{\xi}) = n \ \forall \xi \in L_n, n \in D$ and
\item $\supp y_{\xi} \subset \{\xi\} \ \forall \xi \in \cup_{n \in D} L_n$.\\
\end{enumerate}

Using this immersion, we can now follow the proof for the construction of the groups of cardinality $\mathfrak c$ to produce a countably compact group topology for this $G_*$.
\end{proof}

\subsection{Countably compact groups without non-trivial convergent sequences whose weight has countable cofinality.}

\begin{lem} \label{l.largeweight} Let $G=(G,\mathbf W,L,w)$ be a nicely immersed group, $(J,h)$ be a frame for a countably compact group topology with $\omega\subseteq J_1$ and $(p_\xi:\, \xi \in \bigcup \{ J_n:\, n \in \{1\} \cup D\})$ and $\Phi$ be a fulfillment. Suppose that there exists $(R_U:\, U \in \mathcal U)$ a family of pairwise disjoint non-empty subsets $L_1 \setminus J_1$ of cardinality $\mathfrak c$ such that $\Phi(\chi_\mu) \in U$ for each $U\in \mathcal U$ and $\mu \in R_U$. Then, for every cardinal $\kappa \in [\mathfrak c, 2^\mathfrak c]$ there exists a countably compact group topology with non-trivial convergent sequences on $G$ whose weight is $\kappa$ and  a countably compact group topology without non-trivial convergent sequences on $G$ whose weight is $\kappa$.
\end{lem}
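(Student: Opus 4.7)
My strategy is to augment the given fulfillment $(\Phi,\mathcal P)$ in $\mathbb T^{\mathfrak c}$ to target $\mathbb T^{\kappa}$, preserving the $p$-limit structure that guarantees countable compactness, and to exploit the density produced by $(R_U)$ to control the weight. For the topology \emph{with} non-trivial convergent sequences I will work directly with $G$ and Theorem~\ref{teo.ful.conv.seq}. For the topology \emph{without} non-trivial convergent sequences, I will apply Lemma~\ref{RemoveOmega} to produce a companion nicely immersed $(G^*,\mathbf W^*,L^*,y^*)$ with $G\cong G^*\cap\mathbf W^*_{\mathfrak c\setminus\omega}$, invoke Theorem~\ref{ccgwithcs} on $G^*$ to obtain a fulfillment with an analogous $R_U$ property, perform the same weight-$\kappa$ augmentation, and finally restrict to $G^*\cap\mathbf W^*_{\mathfrak c\setminus\omega}\cong G$ via Corollary~\ref{cor.extracting.ccgwntcs}. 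The case $\kappa=\mathfrak c$ needs no augmentation: Theorem~\ref{teo.ful.conv.seq} and Corollary~\ref{cor.extracting.ccgwntcs} produce the two topologies directly, and the hypothesis on $(R_U)$ makes $\Phi[G]$ (respectively, its restriction) dense in $\mathbb T^{\mathfrak c}$, so the weight is exactly $\mathfrak c$.

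For $\mathfrak c<\kappa\le 2^{\mathfrak c}$, I identify $\kappa$ with $\mathfrak c\sqcup K$ where $|K|=\kappa$, and construct $\Psi:\mathbf W\to\mathbb T^{\kappa}$ by setting $\Psi(w)(\alpha)=\Phi(w)(\alpha)$ for $\alpha<\mathfrak c$ and $\Psi(w)(\beta)=\rho_\beta(w)$ for $\beta\in K$. Each $\rho_\beta$ is produced by Proposition~\ref{prop_hom} applied to a fixed non-zero $w_0\in\mathbf W$ with $R_1=(L_1\setminus J_1)\setminus E(w_0,G,J,h)$ (still of cardinality $\mathfrak c$) and prescribed values $a_{\beta,\mu}\in\mathbb T$ for $\mu\in R_1$ to be chosen in the third step. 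Each $\rho_\beta$ respects the $p_\xi$-limits dictated by the frame and sends $\bigl(\tfrac{n!}{S}\chi_n\bigr)$ to $0$; by Proposition~\ref{prop_p-limit_product} these properties lift to the product, so $\Psi$ together with $\mathcal P$ fulfills the frame in $\mathbb T^{\kappa}$. Theorem~\ref{teo.ful.conv.seq} then gives the countably compact topology with non-trivial convergent sequences on $\Psi[G]$, and the analogous augmentation on $G^*$ combined with Corollary~\ref{cor.extracting.ccgwntcs} yields the companion topology without non-trivial convergent sequences.

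It remains to choose the $a_{\beta,\mu}$ so that $\Psi[G]$ is dense in $\mathbb T^{\kappa}$. Since $\kappa\le 2^{\mathfrak c}$ forces $d(\mathbb T^{\kappa})\le\mathfrak c$, fix a dense sequence $(y_\nu)_{\nu<\mathfrak c}$ in $\mathbb T^{\kappa}$. For each $U\in\mathcal U$, using $|R_U\setminus E|=\mathfrak c$, fix a surjection $\nu_U:R_U\setminus E\to\mathfrak c$ and define $a_{\beta,\mu}=y_{\nu_U(\mu)}(\beta)$ whenever $\mu\in R_U\setminus E$; this is well defined because the $R_U$ are pairwise disjoint. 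Given any basic open $V\subseteq\mathbb T^{\kappa}$, shrink if necessary so that its projection to $\mathbb T^{\mathfrak c}$ belongs to $\mathcal U$, call it $U$; then pick $\nu<\mathfrak c$ with $y_\nu$ in the $K$-part of $V$, and $\mu\in R_U\setminus E$ with $\nu_U(\mu)=\nu$, so that $\Psi(\chi_\mu)\in V$. Density of $\Psi[G]$ in $\mathbb T^{\kappa}$ forces $w(\Psi[G])=w(\mathbb T^{\kappa})=\kappa$; the inclusion $L_1\setminus J_1\subseteq\mathfrak c\setminus\omega$ (coming from $\omega\subseteq J_1$) guarantees the same density for the restricted group, so the companion topology also has weight $\kappa$. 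The main obstacle is precisely this orchestration: the characters $\rho_\beta$ are constructed independently for different $\beta\in K$ via Proposition~\ref{prop_hom}, but their values on the common set $R_1$ must be coordinated through the single family $(\nu_U)_U$ of surjections, and the assumption $\kappa\le 2^{\mathfrak c}$ is used exactly to ensure $d(\mathbb T^{\kappa})\le\mathfrak c$ so that such coordination is possible.
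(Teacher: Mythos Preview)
Your proposal is correct and follows essentially the same route as the paper: augment the given fulfillment with extra coordinates targeting $\mathbb T^\kappa$, prescribe the values on $L_1\setminus J_1$ so that the image becomes dense (forcing weight $\kappa$), and obtain the topology without convergent sequences via Lemma~\ref{RemoveOmega} and Corollary~\ref{cor.extracting.ccgwntcs}. The only minor difference is that you manufacture each extra coordinate $\rho_\beta$ by invoking Proposition~\ref{prop_hom} (which, strictly speaking, imports the selectivity hypothesis on the $p_\xi$), whereas the paper builds a single auxiliary $\Theta:\mathbf W\to\mathbb T^\kappa$ directly by the recursive divisibility extension alone---since $\Theta$ need not separate any point, the countable base step of Proposition~\ref{prop_hom} is unnecessary there; in the paper's ambient setting this distinction is immaterial.
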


\begin{proof} 

Given $\kappa$, we can fix  $\{a_\xi:\, \xi \in L_1 \setminus J_1\}$  such that $\{a_\xi:\, \xi \in R_U \}$ is a dense subset of $\mathbb T^{\kappa}$ for each $U \in \mathcal U$. We can define a homomorphism $\Theta:\mathbf W\, \to \mathbb T^\kappa$ such that 

\begin{enumerate}[label=(\roman*)]

\item  $\Theta (0, \chi_\xi)=a_\xi$ for each $\xi \in  L_1 \setminus J_1 $;

  \item if $\xi \in J_1$, then the $p_\xi$-limit of sequence $(\Theta([h_{\xi}(n)]) : n \in \omega)$ is $\Theta(0, \chi_{\xi})$;

  \item if $\xi \in \bigcup_{n \in D} J_n$, then the $p_\xi$-limit of the sequence $(\Theta([h_{\xi}(n)]) : n \in \omega)$ is $\Theta(y_{\xi}, 0)$;

  \item for each $S$ positive integer, the sequence $\left( \Theta \left( 0, \frac{1}{S}   n!   \chi_{n} \right) : n \in \omega \right)$ is constantly $0 \in \mathbb T^\kappa$.
  \end{enumerate}
  
  Consider the topology generated by the homomorphisms $\Phi$ and $\Theta$. Consider $\Delta:\mathbf W\to \mathbb T^{\mathfrak c}\times \mathbb T^{\kappa}$ be given by $\Delta=(\Phi, \Theta)$. Then $\left(p_\xi:\, \xi \in \bigcup \{ J_n:\, n \in \{1\} \cup D\}\right)$ and $\Delta$ also fulfills the frame. By Theorem \ref{teo.ful.conv.seq}, it follows that $G$ can be equipped with a countably compact group topology with a non-trivial convergent sequences.
  
  Note that the set $\{ \Delta(\chi_\xi):\, \xi \in L_1 \setminus J_1\}$ is dense in $\mathbb {T}^{\mathfrak c} \times \mathbb{T}^{\kappa}$. Thus, the topology on $G$ also has weight $\kappa$.
  
  Now repeating the argument above in $G\oplus \mathbf{W}_\omega$, using Lemma \ref{RemoveOmega}, Corollary \ref{cor.extracting.ccgwntcs} and the argument in Corollary \ref{equiv}, we conclude that $G$ can also be equipped with a countably compact group topology without non-trivial convergent sequences that can be densely embedded  in $\mathbb {T}^{\mathfrak c} \times \mathbb{T}^{\kappa}$. Thus, the topology on $G$ also has weight $\kappa$. 
\end{proof}

Combining Lemma \ref{l.largeweight} and Theorem \ref{ccgwithcs}, we get the following result:

\begin{ex} \label{largeweight} Assume the existence of $\mathfrak c$ selective ultrafilters. Let $G$ be a group of cardinality $\mathfrak c$ that admits a countably compact group topology. Then for every cardinal $\kappa \in [\mathfrak c, 2^\mathfrak c]$ there exist countably compact group topologies with and without non-trivial convergent sequences on $G$ whose weight is $\kappa$.
\end{ex}

\begin{ex} It is consistent that every Abelian group of cardinality $\mathfrak c$ that admits a countably compact group topology admits a countably compact group topology without non-trivial convergent sequences whose weight has countable cofinality.
\end{ex}
\begin{proof}
Take a model in which there exist $\mathfrak c$ selective ultrafilters and there are cardinals of countable cofinality between $\mathfrak c$ and $2^\mathfrak c$. Apply Example \ref{largeweight} and we are done.
\end{proof}

\subsection{Non-homeomorphic countably compact group topologies on a group of cardinality $\mathfrak c$}

Tomita \cite{tomita6} showed the existence of $2^\mathfrak c$ group topologies on the free Abelian group of cardinality $\mathfrak c$ from $2^\mathfrak c$ selective ultrafilters. We will now show that the same happens to all the Abelian groups of cardinality $\mathfrak c$ that admit a countably compact group topology.

First we recall the definition in \cite{tomita6}:

\begin{defin}
If $X$ is a topological space and if $x \in X$ is an accumulation point of $s:\omega\rightarrow X$, we define $\mathcal{F}(X, s, x) $ as the set \[ \{A \subset \omega : \exists U \ \hbox{an open neighborhood of $x$ s. t.} \ \{n \in \omega : s(n) \in U\} \subset A\}.\]

We denote by $\mathcal{F}(X)$ the family consisting of all filters $\mathcal{F}(X, s, x)$, where $X$ is a fixed topological space and $x \in X$ is an accumulation point of some $s:\omega\rightarrow X$.
\end{defin}

It is straightforward to verify that $\mathcal{F}(X, s, x)$ is a filter over $\omega$.  Observe that if $|X| = \mathfrak{c}$, then $|\mathcal{F}(X)| \leq \mathfrak{c}$.

The proof of the following lemma is straightforward.

\begin{lem}\label{cap6_lem_espacos_homeomorfos_tem_mesmo_filtro}
Let $X$ and $Y$ be topological spaces and let $h: X \to Y$ be a homeomorphism. If $x \in X$ is an accumulation point of $s:\omega\rightarrow X$, then $h(x) \in Y$ is an accumulation point of $h\circ s:\omega\rightarrow Y$ and $\mathcal{F}(X, s, x) = \mathcal{F}(Y, h\circ s, h(x))$. \qed
\end{lem}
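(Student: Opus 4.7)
The plan is to exploit the fact that a homeomorphism $h$ induces a bijection between the open neighborhood filter of $x$ in $X$ and the open neighborhood filter of $h(x)$ in $Y$, given by $U\mapsto h[U]$, under which the ``trace sequences'' $\{n\in\omega:s(n)\in U\}$ and $\{n\in\omega:(h\circ s)(n)\in h[U]\}$ coincide. Since the definition of $\mathcal F(X,s,x)$ only refers to these two pieces of data (neighborhoods of the chosen point and the preimages of the sequence), this bijection will transport the filter exactly.

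First I would verify that $h(x)$ is an accumulation point of $h\circ s$. Let $V$ be any open neighborhood of $h(x)$ in $Y$. By continuity of $h$, the set $U=h^{-1}[V]$ is an open neighborhood of $x$ in $X$, and since $x$ is an accumulation point of $s$, the set $\{n\in\omega:s(n)\in U\}$ is nonempty (indeed, this is the only content of being an accumulation point for the filter $\mathcal F(X,s,x)$ to be proper). The key identity
\[
\{n\in\omega:(h\circ s)(n)\in V\}=\{n\in\omega:s(n)\in h^{-1}[V]\}=\{n\in\omega:s(n)\in U\}
\]
then shows that $h(x)$ is an accumulation point of $h\circ s$ in the same sense.

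For the filter equality I would prove both inclusions by running the same trick in opposite directions. For $\mathcal F(X,s,x)\subseteq\mathcal F(Y,h\circ s,h(x))$, take $A\in\mathcal F(X,s,x)$ and pick an open neighborhood $U$ of $x$ witnessing $\{n:s(n)\in U\}\subseteq A$; then $V:=h[U]$ is open in $Y$ (because $h$ is a homeomorphism, hence an open map) and contains $h(x)$, and
\[
\{n\in\omega:(h\circ s)(n)\in V\}=\{n\in\omega:s(n)\in h^{-1}[V]\}=\{n\in\omega:s(n)\in U\}\subseteq A,
\]
so $A\in\mathcal F(Y,h\circ s,h(x))$. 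The reverse inclusion is the symmetric argument applied to the homeomorphism $h^{-1}:Y\to X$, which sends the accumulation point $h(x)$ of $h\circ s$ to the accumulation point $x$ of $h^{-1}\circ h\circ s=s$.

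There is really no obstacle here; the whole content is that a homeomorphism identifies neighborhood filters and preserves pointwise membership of sequences. The only thing that requires a little care is noting that both directions of the neighborhood correspondence need $h$ to be simultaneously continuous and open, which is exactly the homeomorphism hypothesis, so one should not try to get by with just continuity of $h$.
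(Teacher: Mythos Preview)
Your argument is correct and is exactly the kind of straightforward verification the paper has in mind; in fact the paper does not spell out any proof at all, merely declaring the lemma ``straightforward'' and closing it with a \qedsymbol. The only small comment is that your parenthetical about accumulation points (``the set $\{n\in\omega:s(n)\in U\}$ is nonempty'') understates what is actually needed---an accumulation point requires this set to be infinite for every neighborhood $U$---but the same identity $\{n:(h\circ s)(n)\in V\}=\{n:s(n)\in h^{-1}[V]\}$ you already use gives that immediately, so this does not affect the argument.
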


\begin{cor}\label{cor_espacos_homeomorfos_tem_mesmo_filtro}
If $X$ and $Y$ are homeomorphic, then $\mathcal{F}(X) = \mathcal{F}(Y)$. \qed
\end{cor}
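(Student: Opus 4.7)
The plan is to deduce this corollary directly from the preceding Lemma \ref{cap6_lem_espacos_homeomorfos_tem_mesmo_filtro}, which already does essentially all the work; the corollary is really a packaging of that lemma in symmetric form.

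First I would fix a homeomorphism $h : X \to Y$ and pick an arbitrary element of $\mathcal{F}(X)$. By definition, this element has the form $\mathcal{F}(X, s, x)$ for some sequence $s : \omega \to X$ and some accumulation point $x \in X$ of $s$. Applying Lemma \ref{cap6_lem_espacos_homeomorfos_tem_mesmo_filtro} to $h$, $s$ and $x$, I get that $h(x) \in Y$ is an accumulation point of $h \circ s : \omega \to Y$ and that $\mathcal{F}(X, s, x) = \mathcal{F}(Y, h \circ s, h(x))$. The right-hand side is by definition an element of $\mathcal{F}(Y)$, which gives $\mathcal{F}(X) \subseteq \mathcal{F}(Y)$.

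For the reverse inclusion I would use that $h^{-1} : Y \to X$ is also a homeomorphism (this is the only ingredient beyond the lemma itself) and repeat the argument with the roles of $X$ and $Y$ interchanged: every filter $\mathcal{F}(Y, t, y) \in \mathcal{F}(Y)$ coincides with $\mathcal{F}(X, h^{-1} \circ t, h^{-1}(y)) \in \mathcal{F}(X)$. Combining both inclusions yields $\mathcal{F}(X) = \mathcal{F}(Y)$, completing the proof. There is no real obstacle here; the content sits entirely in the lemma, and the corollary is just the observation that the lemma can be invoked in both directions because homeomorphisms are invertible.
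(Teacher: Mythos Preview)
Your proposal is correct and matches the paper's intent: the paper gives no proof beyond the \qedsymbol, treating the corollary as an immediate consequence of Lemma \ref{cap6_lem_espacos_homeomorfos_tem_mesmo_filtro}, and what you have written is precisely the routine two-line unpacking of that implication using $h$ and $h^{-1}$.
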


\begin{lem} \label{filter} Given a frame $(J, h)$ for a nicely immersed group $G$ a $\mathcal P=(p_\xi:\, \xi \in J_1 \cup \bigcup _{n\in D}J_n)$ and $\Phi$ a fulfillment of the frame and $( R_U:\, U \in \mathcal U)$ a family of pairwise disjoint subsets of $L_1 \setminus J_1 $ of cardinality $\mathfrak c$ such that  $\Phi(\chi_\xi) \in U$ for each $U\in \mathcal U$ and $\xi \in R_U$.

Let $A=(a_n:\, n \in \omega)$ be an injective sequence in $ L_1\setminus (J_1\cup \omega)$. Let $\beta \in J_1$ be such that $h_\beta$ is the sequence given by $h_\beta (n)=\chi_{a_n} $ for each $n\in \omega$ (notice that $h_\beta$ is of type 1).

Then there exist a family $(S_W:\, W \in \mathcal U)$ of pairwise disjoint subsets of $L_1 \setminus (J_1 \cup \beta)$ of cardinality $\mathfrak c$ and $\eta: \mathbf W \rightarrow \mathbb T ^\mathfrak c$  such that $\mathcal P$ and $\eta $ are a fulfillment, $\eta (\chi_\mu)\in W$ for each $W\in \mathcal U$ and $\mu \in S_W$, $p_\beta \in \mathcal F (\langle G, \tau_\eta\rangle)$
and $p_\beta \in \mathcal F (\langle G\cap \mathbb W_{\mathfrak c\setminus \omega}, \tau_\eta|_{G\cap\mathbb W_{\mathfrak c\setminus \omega}}\rangle)$.
\end{lem}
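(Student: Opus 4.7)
The plan is to build $\eta$ coordinate-by-coordinate in the spirit of Theorem \ref{ccgwithcs}, but with $\mathfrak c$ additional ``encoding'' coordinates reserved to witness every element of $p_\beta$ as a neighbourhood trace of the sequence $s(n)=\chi_{a_n}$ at its $p_\beta$-limit $\chi_\beta$. Fulfillment of the frame will automatically give $\mathcal F(\langle G,\tau_\eta\rangle,s,\chi_\beta)\subseteq p_\beta$; the encoding will deliver the reverse inclusion.

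\textbf{Setup.} Partition $\mathfrak c=A_0\sqcup A_1$ into two sets of cardinality $\mathfrak c$ and fix a bijection $A_1\to p_\beta$, $\alpha\mapsto B_\alpha$. Enumerate $\mathbf W\setminus\{0\}=\{w_\alpha:\alpha\in A_0\}$ with $\max\supp w_\alpha<\max\{\omega,\alpha\}$, as in Theorem \ref{ccgwithcs}. Choose a pairwise disjoint family $(S_W:W\in\mathcal U)$ of subsets of $L_1\setminus(J_1\cup\{\beta\}\cup\omega\cup\{a_n:n\in\omega\})$, each of size $\mathfrak c$, and for each $W\in\mathcal U$ and $\mu\in S_W$ fix $z_\mu\in\mathbb T^{\supp W}$ with $z_\mu(\alpha)\in W_\alpha$ for every $\alpha\in\supp W$.

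\textbf{Construction.} For $\alpha\in A_0$, apply Proposition \ref{prop_hom} with $w=w_\alpha$, $R_{1,\alpha}=\bigcup\{S_W:\alpha\in\supp W\}$ and values $a_\mu=z_\mu(\alpha)$ to obtain $\rho_\alpha:\mathbf W\to\mathbb T$ satisfying fulfillment, killing $w_\alpha$, and realising the density assignment on $R_{1,\alpha}$. For $\alpha\in A_1$, fix once and for all some $k\in\omega$; using the frame condition $\bigcup_n\supp h_\xi(n)\subseteq(\xi\times\omega)\cup\xi$ at $\xi\in\omega\subseteq J_1$ (and $\omega\subseteq P_1$, so $\xi\times\omega$ is disjoint from $\vec P_0$), one checks $E(\chi_k,G,J,h)$ may be taken equal to $\omega$, whence $R_{1,\alpha}\cup\{a_n:n\in\omega\}$ is disjoint from $E$. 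Apply Proposition \ref{prop_hom} with $w=\chi_k$, $R_1=R_{1,\alpha}\cup\{a_n:n\in\omega\}$, and values $a_\mu=z_\mu(\alpha)$ for $\mu\in R_{1,\alpha}$, together with $a_{a_n}=0+\mathbb Z$ for $n\in B_\alpha$ and $a_{a_n}=\tfrac12+\mathbb Z$ for $n\in\omega\setminus B_\alpha$. The fulfillment condition (ii) forces $\rho_\alpha(0,\chi_\beta)=p_\beta\text{-}\lim\rho_\alpha(\chi_{a_n})=0+\mathbb Z$, because the prescribed values are $0+\mathbb Z$ on $B_\alpha\in p_\beta$. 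Finally set $\eta(w)(\alpha)=\rho_\alpha(w)$.

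\textbf{Verification and main obstacle.} By Proposition \ref{prop_p-limit_product}, $\mathcal P$ and $\eta$ fulfill the frame; the $A_0$-coordinates make $\eta$ injective, and $\eta(\chi_\mu)(\alpha)=z_\mu(\alpha)\in W_\alpha$ for every $\alpha\in\supp W$ and $\mu\in S_W$, so $\eta(\chi_\mu)\in W$. For the accumulation-filter claim, given $B\in p_\beta$, pick the unique $\alpha\in A_1$ with $B_\alpha=B$; the basic open set $V=\{y\in\mathbb T^{\mathfrak c}:y(\alpha)\in(-\tfrac14,\tfrac14)+\mathbb Z\}$ is a neighbourhood of $\eta(\chi_\beta)$, and by construction $\{n:\eta(\chi_{a_n})\in V\}=\{n:\rho_\alpha(\chi_{a_n})=0+\mathbb Z\}=B_\alpha=B$, so $B\in\mathcal F(\langle G,\tau_\eta\rangle,s,\chi_\beta)$. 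Combined with the trivial inclusion $\mathcal F\subseteq p_\beta$, this gives $p_\beta\in\mathcal F(\langle G,\tau_\eta\rangle)$. The restricted statement follows verbatim since $a_n,\beta\notin\omega$ place $\chi_{a_n},\chi_\beta\in\mathbb W_{\mathfrak c\setminus\omega}$, and intersecting $V$ with $G\cap\mathbb W_{\mathfrak c\setminus\omega}$ yields the same trace $B$. The main technical hurdle is the encoding step: one must verify that Proposition \ref{prop_hom} tolerates the enlarged $R_1$ with the prescribed values, which hinges on $E(\chi_k,G,J,h)\subseteq\omega$ and the disjointness $(R_{1,\alpha}\cup\{a_n:n\in\omega\})\cap\omega=\emptyset$ arranged in the setup.
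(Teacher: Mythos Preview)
Your argument is essentially correct, but it takes a genuinely different route from the paper's, and there are two small technical slips worth noting.

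\textbf{Comparison with the paper.} The paper does not rebuild the embedding from scratch. Instead it keeps the given $\Phi$ and manufactures an auxiliary $\Psi:\mathbf W\to\mathbb T^{\mathfrak c}$ whose $\alpha$-th coordinate $\psi_\alpha$ encodes the $\alpha$-th element $A_\alpha$ of $p_\beta$ (with the same $0$/$\tfrac12$ trick you use, only with the roles swapped so that the $p_\beta$-limit is $\tfrac12$). These $\psi_\alpha$ are defined directly, without invoking Proposition~\ref{prop_hom}: since all prescribed values live on the free generators $\{\chi_\xi:\xi\in L_1\setminus J_1\}$ (namely on $\mathbf W_\omega$, on $\{\chi_{a_n}\}$, and on the density witnesses), one simply declares those values and then propagates the $p_\xi$-limits along $J_1\cup\bigcup_n J_n$ by divisibility. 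The final map is $\Delta(\Phi,\Psi)$, re-indexed by a bijection $\mathfrak c\leftrightarrow\mathfrak c\times\mathfrak c$; the density sets $S_W$ arise as $R_{U,V}\subseteq R_U$ for the pair $(U,V)$ corresponding to $W$. This approach is shorter and more modular (it actually uses the hypotheses $\Phi$ and $(R_U)$, which your argument discards), whereas yours is self-contained and shows that the lemma could have been stated without those hypotheses at all.

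\textbf{Two slips.} First, when you choose $(S_W)$ you must also arrange $\max\supp W<\mu$ for every $\mu\in S_W$, exactly as in Theorem~\ref{ccgwithcs}; otherwise $R_{1,\alpha}\cap E(w_\alpha,G,J,h)$ need not be empty for $\alpha\in A_0$, and Proposition~\ref{prop_hom} cannot be applied. Second, the clause ``at $\xi\in\omega\subseteq J_1$'' is not given by the hypotheses of the lemma, and it is not needed either: the constraint $E(\chi_k,G,J,h)\subseteq\omega\cup(k+1)=\omega$ already forces $E=\omega$, and for any $\xi\in\omega\cap J_1$ the frame condition $\supp h_\xi(n)\subseteq(\xi\times\omega)\cup\xi$ together with $\xi\subseteq\omega\subseteq P_1$ (so $(\xi\times\omega)\cap\vec P_0=\emptyset$) gives $[h_\xi(n)]\in\mathbf W_\omega$; if $\omega\cap J_1=\emptyset$ the requirement is vacuous. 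With these two adjustments your proof goes through.
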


\begin{proof}  

For each $ V \in \mathcal U$, let $z_{V} \in \mathbb T^{\supp V}$ be
such that $z_{V}(\xi)\in V_\xi$, where $V_\xi$ is the $\xi$-th coordinate of $V$.
For each $ U \in \mathcal U$, define $\{R_{U,V}:\, V \in \mathcal U\}$ pairwise disjoint subsets of $R_U \setminus \beta$ of cardinality $\mathfrak c$.

Let $\{ A_\alpha:\, \alpha < \mathfrak c\}$ be an enumeration of $p_\beta$. 

For each $\alpha < \mathfrak c$, define a homomorphism 
$\psi_\alpha:\, \mathbf W\to\mathbb T$ such that $\psi_\alpha$ preserves the assigned accumulation points by the frame, $\psi_\alpha|_{\mathbf W_\omega}=0$,
$\psi_\alpha (\chi_{a_n})=0$ if $n \notin A_\alpha$, $\psi_\alpha(\chi_{a_n})=\frac{1}{2}+\mathbb Z$
if $n \in A_\alpha$ and $\psi_\alpha(\chi_{\mu})=z_V(\alpha)$ whenever $U, V \in \mathcal U$, $\alpha \in \supp V$
and $\mu \in R_{U,V}$. This is possible since $\omega \cup A \cup \bigcup \{ R_{U,V}:\, U, V \in \mathcal U\} \subseteq L_1 \setminus J_1$ is a disjoint union; note that $A\subset\beta$ since by the definition of frame, $\supp h_\beta(n)\subset(\beta\times\omega)\cup\beta$, so $\supp\chi_{a_n}=\{a_n\}\subset\beta$.

Note that since $p_\beta$ is in the fulfillment of the frame and $A_\alpha \in p_\beta$, it follows that $\frac{1}{2}+\mathbb Z=p_\beta$-lim $\{\psi_\alpha(\chi_{a_n}):\, n \in \omega\}=p_\beta$-lim$ \{\psi_\alpha(h_\beta(n)):\, n \in \omega\}=\psi_\alpha(\chi_\beta)$ and 
$A_\alpha = \{n \in \omega:\, \psi_\alpha(h_\beta(n)) \in O\}$, where $O$ is the arc of length $\frac{1}{2}$ centered in $\phi_\alpha(\chi_\beta)$.

Let $\Psi$ be the diagonal of the $\psi_\alpha$'s.

Then $\Delta(\Phi,\Psi):\mathbf W\to \mathbb T^{\mathfrak c}\times \mathbb T^{\mathfrak c}$ is as required since $U\times V$ for $U, V \in \mathcal U$ form a basis 
of basic open sets of $\mathbb T^\mathfrak c \times \mathbb T^\mathfrak c$. 
Note that the convergent sequences are still converging to $0$, and
$p_\beta=\mathcal F(\langle G, \tau_{\Delta(\Phi, \Psi)}\rangle,\chi[A], \chi_\beta) \in \mathcal F(G,\tau_{\Delta(\Phi,\Psi)})$ and $p_\beta=\mathcal F(\langle G\cap \mathbf W_{\mathfrak c\setminus \omega}, \tau_{\Delta(\Phi, \Psi)}|_{ G\cap \mathbf W_{\mathfrak c\setminus \omega}}, \chi[A], \chi_\beta) \in \mathcal F(G\cap \mathbf W_{\mathfrak c\setminus \omega},\tau_{\Delta(\Phi,\Psi)}|_{G\cap \mathbf W_{\mathfrak c\setminus \omega}})$.\\
We can now re-enumerate the indices using a bijection between $\mathfrak c $ and $\mathfrak c \times \mathfrak c$ so that $G$ is immersed in 
$\mathbb T^\mathfrak c$ as required. The basic open set will have only its support renamed, thus the condition of density will be preserved. A basic open $W$ in $\mathbb T^\mathfrak c$ will correspond to a basic open set $U\times V \in \mathbb T^\mathfrak c \times \mathbb T^\mathfrak c$  and $S_W$ will be $R_{U,V}$.
\end{proof}

\begin{lem}\label{p_na_familia_filtro} Assume that there exist $\mathfrak c$ selective ultrafilters. Let $p$ be a selective ultrafilter and
$G$ be a non-torsion Abelian group of cardinality $\mathfrak c$ that admits a countably compact group topology. Then there exists a group topology $\tau$ on  $G$ which makes it a countably compact group with (or without, if desired) converging sequences such that $p\in \mathcal F(G, \tau)$. Moreover, we can choose the weight of $\tau$ to be any cardinal in $[\mathfrak c, 2^\mathfrak c]$.
\end{lem}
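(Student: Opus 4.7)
The plan is to braid together Proposition \ref{megazord}, Theorem \ref{ccgwithcs}, Lemma \ref{filter}, Lemma \ref{l.largeweight}, and---for the version without non-trivial convergent sequences---Lemma \ref{RemoveOmega} with Corollary \ref{cor.extracting.ccgwntcs}. First I would use Proposition \ref{megazord} to assume $G$ is nicely immersed as $(G,\mathbf W,L,y)$, and build a frame $(J,h)$ designed with $p$ in mind: pick an injective sequence $(a_n)\subset L_1\setminus(J_1\cup\omega)$, then pick some $\beta\in J_1$ with $\beta>\sup\{a_n:n\in\omega\}$, and set $h_\beta(n)=\chi_{a_n}$ (a sequence of type~$1$, since each $\chi_{a_n}$ has a fresh singleton support); fill out the rest of $h$ so that every member of $\mathcal H$ is listed $\mathfrak c$ times. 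Using the hypothesis, I enumerate a family of $\mathfrak c$ pairwise incomparable selective ultrafilters as $\mathcal P=(p_\xi:\xi\in J_1\cup\bigcup_{n\in D}J_n)$ with $p_\beta=p$; this is possible because two selective ultrafilters are either Rudin--Keisler equivalent or incomparable, so a swap followed by a relabeling of $\omega$ inserts $p$ itself into a pairwise incomparable family of the right size.

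Next, Theorem \ref{ccgwithcs} produces a fulfillment $\Phi$ of the frame with density family $(R_U:U\in\mathcal U)$. Feeding $\Phi$, $(R_U)$, $A=(a_n)$, and $\beta$ into Lemma \ref{filter} yields a new homomorphism $\eta$ such that $(\eta,\mathcal P)$ is still a fulfillment, a new disjoint density family $(S_W:W\in\mathcal U)$ with $\eta(\chi_\mu)\in W$ for $\mu\in S_W$, and the crucial conclusion $p=p_\beta\in\mathcal F(\langle G,\tau_\eta\rangle)$ (and likewise for the $\mathbf W_{\mathfrak c\setminus\omega}$-restriction). Theorem \ref{teo.ful.conv.seq} certifies that $\tau_\eta$ is a countably compact Hausdorff group topology on $G$ with a non-trivial convergent sequence and $p$ in its accumulation filter family, settling the case $\kappa=\mathfrak c$ with convergent sequences.

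For an arbitrary weight $\kappa\in[\mathfrak c,2^\mathfrak c]$, I reuse the argument of Lemma \ref{l.largeweight}: choose $\{a_\xi:\xi\in L_1\setminus J_1\}\subset\mathbb T^\kappa$ so that $\{a_\xi:\xi\in S_W\}$ is dense in $\mathbb T^\kappa$ for each $W\in\mathcal U$, build $\Theta:\mathbf W\to\mathbb T^\kappa$ fulfilling the frame with $\Theta(0,\chi_\xi)=a_\xi$ on $L_1\setminus J_1$, and pass to $\Delta=(\eta,\Theta)$. The induced topology $\tau_\Delta$ has weight $\kappa$. Because $(\Delta,\mathcal P)$ still fulfills the frame, $\chi_\beta$ remains a $p$-limit of $h_\beta$ in $\tau_\Delta$, so $\mathcal F(G,\tau_\Delta,h_\beta,\chi_\beta)$ is a proper filter containing $p$, and by ultrafilter maximality equals $p$. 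For the ``without non-trivial convergent sequences'' variant I would first apply Lemma \ref{RemoveOmega} to obtain $(G^*,\mathbf W^*,L^*,y^*)$ with $G\cong G^*\cap\mathbf W^*_{\mathfrak c\setminus\omega}$, run the entire construction above on $G^*$, and take the subspace topology on $G^*\cap\mathbf W^*_{\mathfrak c\setminus\omega}$: Corollary \ref{cor.extracting.ccgwntcs} kills the non-trivial convergent sequences, and the second clause of Lemma \ref{filter}'s output guarantees that $p\in\mathcal F$ survives the restriction.

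The main obstacle is the bookkeeping that propagates $p\in\mathcal F$ through the successive enlargements of the topology. The key observation making it all work is that once $(\eta,\mathcal P)$ fulfills the frame and $h_\beta(n)=\chi_{a_n}$, any finer topology arising from a further fulfillment automatically keeps $\chi_\beta$ as a $p$-limit of $h_\beta$; thus ultrafilter maximality forces $\mathcal F=p$ in the finer topology. A related subtlety is coordinating the frame, the enumeration $\mathcal P$, and the application of Lemma \ref{filter} so that $p$ is already positioned at the right index $\beta$ before Theorem \ref{ccgwithcs} is invoked---all three must be designed in tandem at the start.
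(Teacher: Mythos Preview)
Your proposal is correct and follows essentially the same route as the paper's proof: set up the frame so that $p=p_\beta$, apply Theorem \ref{ccgwithcs} to get $\Phi$ and the density family, feed these into Lemma \ref{filter} to obtain $\eta$ with $p\in\mathcal F(\langle G,\tau_\eta\rangle)$, then invoke Lemma \ref{l.largeweight} for the weight adjustment and Lemma \ref{RemoveOmega} with Corollary \ref{cor.extracting.ccgwntcs} for the sequence-free version. Your explicit justification that $p\in\mathcal F$ survives the passage to $\tau_\Delta$---via fulfillment keeping $\chi_\beta$ a $p$-limit of $h_\beta$ and ultrafilter maximality---is a point the paper leaves implicit, so your write-up is in fact slightly more complete there.
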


\begin{proof}
Consider the frame in Lemma \ref{filter} and set an enumeration $\mathcal P$ for the selective ultrafilters so that $p=p_\beta$. By Theorem \ref{ccgwithcs},
there are $\Phi$ and $(R_U:\, U \in \mathcal U)$ such that the conditions of
Lemma \ref{filter} are satisfied.

Then there exists another fulfillment $\eta$ as in the conclusion of Lemma \ref{filter} in which $p \in \mathcal F (\langle G, \tau_\eta\rangle)$. The fulfillment $\mathcal P$ and $\eta$ satisfies the conditions in
Example \ref{largeweight}, thus, there is a group topology $\tau$ with weight $\kappa$ for which $p \in \mathcal F(\langle G, \tau \rangle )$.

Now we can again repeat the argument for $G\oplus \mathbf {W}_\omega$ used in Corollary \ref{equiv} to obtain the topology without non-trivial convergent sequences for $G$ as desired, as the witness to obtain $p$ depends only on elements of $L_1 \setminus \omega$.
\end{proof}

\begin{cor} Assume the existence of $2^{\mathfrak c}$ selective ultrafilters. For each cardinal $\kappa \in [\mathfrak c , 2^\mathfrak c]$ and each  $G$ that admits a countably compact group topology

\begin{enumerate}
    \item there exists at least $2^\mathfrak c$ non-homeomorphic countably compact group topologies with non-trivial convergent sequences on $G$ of weight $\kappa$ and
    
     \item there exists at least $2^\mathfrak c$ non-homeomorphic countably compact group topologies without non-trivial convergent sequences on $G$ of weight $\kappa$.
    
\end{enumerate}
\end{cor}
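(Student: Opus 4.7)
The plan is to invoke Lemma \ref{p_na_familia_filtro} once for each of the $2^\mathfrak c$ pairwise incomparable selective ultrafilters (call them $\{p_\alpha : \alpha < 2^\mathfrak c\}$) to manufacture a large family of candidate topologies, and then extract $2^\mathfrak c$ non-homeomorphic ones by a counting argument based on the invariant $\mathcal F(\cdot)$.

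\medskip

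More precisely, fix $\kappa \in [\mathfrak c, 2^\mathfrak c]$ and fix the case (with or without non-trivial convergent sequences). For each $\alpha < 2^\mathfrak c$, apply Lemma \ref{p_na_familia_filtro} to produce a countably compact group topology $\tau_\alpha$ on $G$ of weight $\kappa$, of the prescribed kind (with or without non-trivial convergent sequences), such that $p_\alpha \in \mathcal F(\langle G,\tau_\alpha\rangle)$. This is a family of $2^\mathfrak c$ topologies on $G$ satisfying the desired properties, so it remains only to argue that at least $2^\mathfrak c$ of them are pairwise non-homeomorphic.

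\medskip

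For the counting step, recall that $|G|=\mathfrak c$, so for each $\alpha$ one has $|\mathcal F(\langle G,\tau_\alpha\rangle)|\leq \mathfrak c$. Define an equivalence relation on $2^\mathfrak c$ by $\alpha\sim\beta$ iff $\langle G,\tau_\alpha\rangle$ and $\langle G,\tau_\beta\rangle$ are homeomorphic. By Corollary \ref{cor_espacos_homeomorfos_tem_mesmo_filtro}, $\alpha\sim\beta$ implies $\mathcal F(\langle G,\tau_\alpha\rangle)=\mathcal F(\langle G,\tau_\beta\rangle)$, hence the entire $\sim$-class of $\alpha$ is contained in $\{\beta<2^\mathfrak c : p_\beta\in \mathcal F(\langle G,\tau_\alpha\rangle)\}$, a set of size at most $\mathfrak c$. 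Since $2^\mathfrak c$ cannot be written as a union of fewer than $2^\mathfrak c$ sets of size $\leq\mathfrak c$, there must be at least $2^\mathfrak c$ equivalence classes, yielding $2^\mathfrak c$ pairwise non-homeomorphic topologies.

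\medskip

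There is essentially no obstacle beyond checking that Lemma \ref{p_na_familia_filtro} really delivers topologies of the prescribed weight in both the convergent-sequence and non-convergent-sequence cases (which the statement of that lemma explicitly affords). The selective ultrafilters $p_\alpha$ are pairwise incomparable, but this is not even needed for the counting step; what is used is only that they are pairwise distinct and that each witnesses a nontrivial element of $\mathcal F$ of the corresponding topology.
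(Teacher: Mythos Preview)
Your proof is correct and rests on the same two ingredients as the paper's: Lemma~\ref{p_na_familia_filtro} (to force a prescribed selective ultrafilter into $\mathcal F(\langle G,\tau\rangle)$ while controlling weight and the presence/absence of convergent sequences) together with the bound $|\mathcal F(X)|\le\mathfrak c$ and Corollary~\ref{cor_espacos_homeomorfos_tem_mesmo_filtro}. The only difference is organizational. The paper argues by \emph{avoidance}: given any family of fewer than $2^{\mathfrak c}$ spaces of size $\mathfrak c$, the union of their $\mathcal F$-filters has size $<2^{\mathfrak c}$, so one may pick a selective $p$ outside that union and invoke Lemma~\ref{p_na_familia_filtro} to produce a topology not homeomorphic to any of them; hence no family of fewer than $2^{\mathfrak c}$ such topologies can be maximal. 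You instead build all $2^{\mathfrak c}$ topologies at once and count homeomorphism classes directly, observing that each class has size at most $\mathfrak c$. Both routes use the same cardinal arithmetic ($\lambda\cdot\mathfrak c<2^{\mathfrak c}$ for $\lambda<2^{\mathfrak c}$) and are equally valid; yours is arguably cleaner since it avoids the implicit transfinite recursion.
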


\begin{proof}
Fix $\kappa < 2^{\mathfrak{c}}$ a cardinal and consider $\{(X_{\alpha}, \tau_{\alpha}) : \alpha < \kappa\}$ a family of spaces such that $|X_{\alpha}| = \mathfrak{c}$, for every $\alpha < \kappa$. We shall show that it is possible to endow $G$ with a countably compact group topology $\tau$ such that $(G, \tau)$ is not homeomorphic to $(X_{\alpha}, \tau_{\alpha})$, for every $\alpha < \kappa$. Since we are assuming the existence of $2^{\mathfrak{c}}$ selective ultrafilters, let $p$ be a selective ultrafilter such that $p \not \in \bigcup_{\alpha < \kappa} \mathcal{F}(X_{\alpha})$.

By Lemma \ref{p_na_familia_filtro}, we can obtain a group topology $\tau^i$ for $i\in \{1,2\}$ as in items $(1)$ or $(2)$ for 
which $p\in \mathcal F(\langle G, \tau^i\rangle )$. Then this topology is non-homeomorphic to the ones listed. Thus, there must be at least $2^\mathfrak c$ non-homeomorphic topologies as in $(1)$ and $(2)$.
\end{proof}

\section{Question} \label{question}

Recently, Bellini, Rodrigues and Tomita \cite{BELLINI2021107653} showed that if $p$ is  a selective ultrafilter and $\kappa=\kappa^\omega$ then $\mathbb Q^{(\kappa)}$ admits a $p$-compact group topology without non-trivial convergent sequences from a selective ultrafilter. 
This gives the first arbitrarily large non-torsion countably compact groups without non-trivial convergent sequences.

\begin{quest} Classify the non-torsion or the torsion-free Abelian groups of cardinality $\mathfrak c$ that admit a $p$-compact topology (without non-trivial convergent sequences), for some ultrafilter $p$.
\end{quest}

The torsion counterpart of the question above has a consistent classification \cite{castro-pereira&tomita2010}.

In particular, we ask:

\begin{quest} Is there a $p$-compact group topology (without non-trivial convergent sequences) compatible with ${\mathbb Z}^{(\mathfrak c)} \times \mathbb Q ^{(\mathfrak c)} $, for some ultrafilter $p$?  A group topology whose $\omega$-th power is countably compact? What about $\mathbb Z \times \mathbb Q^\mathfrak c$?
\end{quest}

In \cite{boero&castro&tomita}, Boero, 
Castro-Pereira and Tomita showed that there exists a countably compact free Abelian group of cardinality $\mathfrak c$ from a selective ultrafilter.

\begin{quest} Assume the existence of a selective ultrafilter. Classify the Abelian groups of cardinality $\mathfrak c$ that admit a countably compact group topology.
\end{quest}

Recently Hru\v sak, van Mill, Ramos-Garcia and Shelah \cite{michaelnew}
provided a countably compact group of order $2$ without non-trivial convergent sequences in $ZFC$, answering classic problems of van Douwen and Comfort. It is not clear yet if this can be extended to non-torsion groups of cardinality $\mathfrak c$. The following has been asked by Tkachenko and it would be the first step towards a $ZFC$ classification of Abelian groups that admit a countably compact group topology.

\begin{quest}[\cite{van1990}, Question 508] Is there a countably compact group topology on the free Abelian group of cardinality $\mathfrak c$ in $ZFC$?
\end{quest}

\section{Acknowledgments} The first and third authors are doctoral students (Processes FAPESP numbers 2017/15709-6, 2017/15502-2, 2019/01388-9 and 2019/02663-3) who conducted a complete revision and rewriting on the manuscript to its present form under the supervision of the last author. The second author has received financial support from CNPq - "Bolsa de doutorado" and the first draft (the classification without convergent sequences) is part of her doctoral thesis. The last author received financial support from CNPq (Brazil) --- ``Bolsa de Produtividade em Pesquisa (processo 305612/2010-7). Projeto: Grupos topol\'ogicos, sele\c{c}\~oes e topologias de hiperespa\c{c}o" during the research that led to the first draft and has received support from FAPESP Aux\' \i lio regular de pesquisa Proc. Num. 2012/01490-9, CNPq Produtividade em Pesquisa 307130/2013-4, CNPq Projeto Universal Num. 483734/2013-6 and Aux\' \i lio Regular FAPESP - 2016/26216-8 during the research that led to the final version. 

\bibliographystyle{plain}
\bibliography{gruposenumeravelmentecompactos}
\end{document}